\newcommand{\newpar}{\vspace{5mm}\par}
\newcommand{\vnorm}[1]{\left\|#1\right\|}
\newtheorem{theorem}{Theorem}[section]
\newtheorem{proposition}[theorem]{Proposition}
\newtheorem{lemma}[theorem]{Lemma}
\newtheorem{corollary}[theorem]{Corollary}
\begin{document}
\title{Linked systems of symmetric designs}
\author{Brian G. Kodalen \\
	Department of Mathematical Sciences \\
	Worcester Polytechnic Institute \\
	Worcester, Massachusetts \\
	{\tt bgkodalen@wpi.edu}}
\date{\today}
\maketitle
\medskip
\begin{abstract}
	A linked system of symmetric designs (LSSD) is a $w$-partite graph ($w\geq 2$) where the incidence between any two parts corresponds to a symmetric design and the designs arising from three parts are related. The original construction for LSSDs by Goethals used Kerdock sets, in which $v$ is a power of two. Some four decades later, new examples were given by Davis et.\ al.\ and Jedwab et.\ al.\ using difference sets, again with $v$ a power of two. In this paper we develop a connection between LSSDs and ``linked simplices", full-dimensional regular simplices with two possible inner products between vertices of distinct simplices. We then use this geometric connection to construct sets of equiangular lines and to find an equivalence between regular unbiased Hadamard matrices and certain LSSDs with Menon parameters. We then construct examples of non-trivial LSSDs in which $w$ can be made arbitarily large for fixed even part of $v$. Finally we survey the known infinite families of symmetric designs and show, using basic number theoretic conditions, that $w=2$ in most cases. 
\end{abstract}
\section{Introduction}
In \cite{Cameron}, Cameron investigated groups with inequivalent doubly-transitive permutation representations having the same permutation character and introduced the notion of a linked system of symmetric designs (LSSD). One such example arising from Kerdock codes was communicated by Goethals and later published in \cite{GOETHALS197643}. These structures in the homogeneous case were then further studied by Noda \cite{Noda} where he bounded the number of fibers in a LSSD in terms of the design parameters induced between any two of the fibers. Using the specific $(16,6,2)$ designs, Mathon \cite{Mathon} classified all inequivalent LSSDs on these parameters via a computer search, finding that there were many inequivalent LSSDs on two or three fibers but only the scheme described by Geothals worked with four or more fibers. Later, Van Dam proved in \cite{VanDam} the equivalence between these objects and 3-class Q-antipodal association schemes. Martin, Muzychuk, and Williford found a connection to mutually unbiased bases in certain dimensions \cite{MMW}. Finally Davis, Martin, Polhill \cite{DMP} and Jedwab, Li, Simon \cite{LDS} built more non-trivial examples using difference sets in 2-groups.\newpar
We begin with a survey of known results focusing on the connection to association schemes. We then introduce ``linked simplices" and establish their equivalence to LSSDs. We compare three known bounds on the number of fibers and then explore connections to structures in Euclidean space. We show how to construct Equiangular lines from arbitrary LSSDs and explore cases where LSSDs lead to real mutually unbiased bases (MUBs). After reviewing known examples, we focus on the case of Menon parameters and, employing an equivalence with sets of mutually unbiased Hadamard matrices, we construct new families of LSSDs for many values of $v$. In an appendix, we survey the parameters of known infinite families of symmetric designs and determine which of these cannot produce LSSDs on more than two fibers.
\section{Homogeneous linked systems of symmetric designs}
We begin by reviewing symmetric designs as these will play a central role in all that follows. A \textit{symmetric 2-design} with parameters $(v,k,\lambda)$ is a set of blocks $\mathcal{B}$ on point set $X$ written as $(X,\mathcal{B})$ satisfying the following three conditions:
\begin{itemize}
	\item There are $v$ blocks and $v$ points ($\vert\mathcal{B}\vert = \vert X\vert = v$);
	\item Every block contains $k$ points and every point is contained in $k$ blocks;
	\item Every pair of points is contained in $\lambda$ blocks and the intersection of any pair of blocks contains $\lambda$ points.
\end{itemize}
We form an \textit{incidence} matrix $B$ for the block design, indexing rows by blocks and columns by points, setting $B_{ij} = 1$ if point $j$ is in block $i$ and $B_{ij} = 0$ otherwise. Finally, we note the following two equivalent equations which hold for any symmetric 2-design:
\begin{align}
	k(k-1) &= \lambda(v-1)\label{sym:1}\\
	k(v-k) &= (k-\lambda)(v-1).\label{sym:2}
\end{align}
We now move to a description of a homogeneous\footnote{Here, ``homogeneous" refers to the designs between fibers all having the same parameters. For the duration of this paper, we will only concern ourselves with this case, though we drop this clarification later and only refer to the structures as linked systems of symmetric designs.} linked system of symmetric designs as described by Cameron in \cite{Cameron} and Noda in \cite{Noda}. 
Consider a multipartite graph $\Gamma$ on $wv$ vertices with vertex set partitioned into $w$ sets of $v$ vertices called ``fibers":
\[X = X_1\dot{\cup} X_2\dot{\cup}\cdots\dot{\cup}X_w.\]
We say $\Gamma$ is a \emph{linked system of symmetric designs}, $LSSD(v,k,\lambda;w)$ ($w\geq 2$), if it satisfies the following three properties:
\begin{enumerate}[(i)]
	\item no edge of $\Gamma$ has both ends in the same fiber $X_i$;
	\item for all $1\leq i,j\leq w$ with $i\neq j$, the induced subgraph of $\Gamma$ between $X_i$ and $X_j$ is the incidence graph of some $(v,k,\lambda)$-design;
	\item there exist constants $\mu$ and $\nu$ such that for distinct $h,i,j$ ($1\leq h,i,j\leq w$), 
	\begin{equation}\label{munudef}
	a\in X_i, b\in X_j \Rightarrow\vert\Gamma(a)\cap\Gamma(b)\cap X_h\vert=\begin{cases}
	\mu & a\sim b\\
	\nu & a\not\sim b
	\end{cases}
	\end{equation}
\end{enumerate}
where $\sim$ denotes adjacency in $\Gamma$ and $\Gamma(x)$ denotes the neighborhood of vertex $x$. Observe that $\Gamma$ is regular with valency $k(w-1)$.\newpar
The values of $\mu$ and $\nu$ are constrained heavily by the geometry of our graph. Assume $w\geq 3$ and consider the induced subgraph on three fibers $X_1$, $X_2$, and $X_3$. With an appropriate ordering on the vertices, the adjacency matrix of this induced subgraph has block form given by
\[A = \left[\begin{array}{ccc}
0 & B_3 & B_2^T\\
B_3^T & 0 & B_1\\
B_2 & B_1^T & 0
\end{array}\right]\]
where $B_i$ is the incidence matrix with rows indexed by $X_{i+1}$ and columns indexed by $X_{i-1}$ (with subscripts computed modulo three). Our definition implies that
\begin{equation}\label{incidence}
B_h^TB_h = B_hB_h^T = (k-\lambda)I + \lambda J
\end{equation}
for $h=1,2,3$. Moreover, our definitions of $\mu$ and $\nu$ give us three equations of the form
\begin{equation}
B_3 B_1 = \nu J + (\mu-\nu) B_2^T.\label{B1B3}
\end{equation}
Using this equation, we multiply both sides on the right by the all ones matrix to arrive at the equation
\begin{equation}
\nu=\frac{k(k+(\mu-\nu))}{v}.\label{nu}
\end{equation}
If we instead multiply Equation \eqref{B1B3} on the left by $B_3^T$, we find:
\[\left(\nu(k-(\mu-\nu))-\lambda k\right)J + \left((\mu-\nu)^2-(k-\lambda)\right)B_1=0.\]
Since $B_1$ and $J$ are linearly independent, this means that $\nu (k-s)=\lambda k$ and $(\mu-\nu)^2 = k-\lambda$. Defining $s = \sqrt{k-\lambda}$, we use the latter condition to give us that $\mu = \nu\pm s$. As we know $\mu$ and $\nu$ are both integers, this tells us that we must have $\sqrt{k-\lambda}\in\mathbb{Z}^+$ whenever $w>2$. From here on, we assume $s\in \mathbb{Z}$. Plugging this into equation \eqref{nu}, we find that $\nu = \frac{k(k\pm s)}{v}$. We now have expressions for $\mu$ and $\nu$ given by
\begin{equation}
	\nu = \frac{k(k\pm \sqrt{k-\lambda})}{v}, \qquad \mu = \nu\mp \sqrt{k-\lambda}. \label{mu-nu}
\end{equation}

As we now have two possibilities for $\mu$ and $\nu$, it becomes useful to distinguish between the two types of LSSDs. We will refer to the LSSD as ``$\mu$-heavy'' (resp., ``$\nu$-heavy'') when $\mu>\nu$ (resp., $\nu>\mu$). Note that $\mu\neq \nu$ since $k-\lambda$ is positive. We now show that swapping adjacency between fibers produces another LSSD; we call this graph the multipartite complement of $\Gamma$.
\begin{proposition}[Noda]
	Let $\Gamma$ be a  $LSSD(v,k,\lambda;w)$ with $w>2$. If $\Gamma$ is $\mu$-heavy (resp., $\nu$-heavy), the multipartite complement $\Gamma'$ is a $\nu$-heavy (resp., $\mu$-heavy) $LSSD(v,v-k,v-2k+\lambda;w)$.
\end{proposition}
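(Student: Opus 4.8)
The plan is to verify the three defining properties of an LSSD for the multipartite complement $\Gamma'$ one at a time, recycling the matrix identities already available for $\Gamma$. Property (i) is immediate, since swapping adjacency between fibers creates no edge inside a fiber. For property (ii), observe that if $B$ is the incidence matrix of the $(v,k,\lambda)$-design induced by $\Gamma$ between two fibers, then the incidence matrix induced by $\Gamma'$ between those same fibers is $J-B$. Expanding $(J-B)(J-B)^T$ and using \eqref{incidence} together with $BJ=JB=kJ$ gives $(k-\lambda)I+(v-2k+\lambda)J$, while every row and column sum of $J-B$ equals $v-k$; since $(v-k)-(v-2k+\lambda)=k-\lambda$, this is precisely the incidence matrix of a symmetric $(v,\,v-k,\,v-2k+\lambda)$-design (the standard complementary design, with $v-2k+\lambda=(v-k)(v-k-1)/(v-1)\ge 0$). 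Hence (ii) holds.

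The substance of the argument is property (iii). Fix three fibers, adopt the block notation of the excerpt, and write $B_1,B_2,B_3$ for the incidence matrices of $\Gamma$ and $B_i'=J-B_i$ for those of $\Gamma'$. Starting from \eqref{B1B3}, I would compute
\[
B_3'B_1'=(J-B_3)(J-B_1)=vJ-kJ-kJ+B_3B_1=(v-2k+\nu)J+(\mu-\nu)B_2^T,
\]
and then substitute $B_2^T=J-(B_2')^T$ (immediate from $B_2'=J-B_2$ and the symmetry of $J$) to rewrite the right-hand side as $(v-2k+\mu)J+(\nu-\mu)(B_2')^T$. This has exactly the shape of \eqref{B1B3} for $\Gamma'$, with
\[
\nu'=v-2k+\mu,\qquad \mu'=v-2k+\nu.
\]
Reading off matching entries on both sides, just as in the derivation of \eqref{B1B3}, shows that for $a\in X_i$ and $b\in X_j$ one has $\lvert\Gamma'(a)\cap\Gamma'(b)\cap X_h\rvert=\mu'$ when $a\sim' b$ and $\nu'$ when $a\not\sim' b$. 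Since only the global constants $v,k,\mu,\nu$ entered the computation, the same pair $\mu',\nu'$ works for every ordered triple of fibers, so (iii) holds and $\Gamma'$ is an $LSSD(v,\,v-k,\,v-2k+\lambda;\,w)$.

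The heaviness claim then drops out of these formulas: $\mu'-\nu'=\nu-\mu$, so if $\Gamma$ is $\mu$-heavy ($\mu>\nu$) then $\mu'<\nu'$, i.e.\ $\Gamma'$ is $\nu$-heavy, and symmetrically in the reverse direction. As a consistency check one can confirm that $\sqrt{(v-k)-(v-2k+\lambda)}=\sqrt{k-\lambda}=s$ and that the above $\mu',\nu'$ are exactly the values produced by \eqref{mu-nu} for the complementary parameters with the sign choice opposite to that of $\Gamma$; I expect no trouble there. The one place that genuinely demands care is bookkeeping the row- and column-index sets of $B_1,B_2,B_3$ (and the transpose in $(B_2')^T$) so that the substitution $B_2^T=J-(B_2')^T$ is applied to the correct matrix — the rest is routine linear algebra.
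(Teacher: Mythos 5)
Your proof is correct and takes essentially the same route as the paper: verify the complementary design parameters and then obtain $\nu' = v-2k+\mu$ and $\mu' = v-2k+\nu$ by inclusion--exclusion, concluding that heaviness flips since $\mu'-\nu'=\nu-\mu$. The only cosmetic difference is that you phrase the inclusion--exclusion in matrix form via $(J-B_3)(J-B_1)$, whereas the paper directly counts the vertices of $X_h$ adjacent to neither $a$ nor $b$; the underlying computation is identical.
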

\begin{proof}
	Let $\Gamma$ be a $\mu$-heavy $LSSD(v,k,\lambda;w)$. One easily checks the subgraph of $\Gamma^\prime$ induced between any two fibers is the incidence graph of a symmetric design with the following parameters.
	\[\begin{aligned}
	v^\prime = v,\qquad k^\prime = v-k,\qquad \lambda^\prime = v-2k+\lambda.
	\end{aligned}\]
	What remains to verify is the existence of integers $\mu^\prime$ and $\nu^\prime$ satisfying condition (iii). Since $k^\prime-\lambda^\prime=k-\lambda$, the parameter $s$ is the same for both LSSDs. Now consider three distinct fibers $X_i$, $X_j$, and $X_h$ and choose vertices $a\in X_i$ and $b\in X_j$ such that $a$ is adjacent to  $b$ in $\Gamma$. Then $a$ is not adjacent to $b$ in $\Gamma^\prime$ and their number of common neighbors in $X_h$ in $\Gamma^\prime$ will be the number of vertices in $X_h$ which were adjacent to neither $a$ nor $b$ in $\Gamma$. We find from \eqref{mu-nu},
	\[\nu^\prime = v-2k+\mu=\frac{k^\prime(k^\prime+s)}{v}.\]
	We repeat the same argument for two vertices not adjacent in $\Gamma$ to find $\mu^\prime = v -2k+\nu = \nu^\prime - s$, showing $\Gamma^\prime$ is $\nu$-heavy. The other implication follows by an identical proof.
\end{proof}

At first sight, it is not clear that $\mu$ and $\nu$ are uniquely determined by $v$, $k$, and $\lambda$. We now show only one of the outcomes in \eqref{mu-nu} is possible for $v\geq 3$.
\begin{lemma}
	\label{gcd}
	Let $\Gamma$ be a  $LSSD(v,k,\lambda;w)$ with $w>2$ and $1<k<v-1$. Then the following hold:
	\begin{enumerate}[(i)]
		\item exactly one of $\frac{k(k+ s)}{v}$ and $\frac{k(k-s)}{v}$ is an integer;
		\item $\gcd(k,v)>1$;
		\item $\gcd(s,v)>1$.
	\end{enumerate}
\end{lemma}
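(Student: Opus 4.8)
The plan is to reduce everything to a single algebraic identity relating the design parameters to $s$, combined with the fact that $\nu$ is a genuine intersection count. First I would record that, since $s^2=k-\lambda$, equation \eqref{sym:1} rewrites as $\lambda v=\lambda+k(k-1)=k^2-s^2=(k-s)(k+s)$; in particular $v\mid k^2-s^2$. Next, by \eqref{mu-nu} the number $\nu$ (being a cardinality) is a nonnegative integer equal to $\tfrac{k(k+s)}{v}$ in the $\nu$-heavy case and to $\tfrac{k(k-s)}{v}$ in the $\mu$-heavy case, so at least one of the two quantities in (i) is an integer; equivalently, $v\mid k(k+s)$ or $v\mid k(k-s)$. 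Finally I would isolate the inequality that drives the proof: by \eqref{sym:2}, $2s^2=2(k-\lambda)=\tfrac{2k(v-k)}{v-1}$, and since the hypothesis $1<k<v-1$ forces $2\le k\le v-2$ (so $v\ge4$) the numerator satisfies $2k(v-k)\le v^2/2<v(v-1)$, giving $0<2s^2<v$. Also $s\ge1$ (here the standing assumption $w>2$ enters, via the discussion preceding the lemma) and $\lambda\ge1$ by \eqref{sym:1}, so $0<s<\sqrt{k}<k<v$; hence $0<k-s<v$ and $0<k+s<2v$.

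For (i), suppose both $\tfrac{k(k+s)}{v}$ and $\tfrac{k(k-s)}{v}$ are integers, i.e.\ $v\mid k(k+s)$ and $v\mid k(k-s)$. Subtracting $k^2-s^2$ (which $v$ also divides) from each product gives $v\mid s(k+s)$ and $v\mid s(k-s)$, and subtracting those two gives $v\mid 2s^2$, contradicting $0<2s^2<v$. Hence exactly one of the two quantities is an integer.

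For (ii) and (iii) I would run the same short divisibility argument. We know $v\mid k(k+s)$ or $v\mid k(k-s)$. If $\gcd(k,v)=1$ we cancel $k$ to get $v\mid k+s$ or $v\mid k-s$; if $\gcd(s,v)=1$ we first subtract $k^2-s^2$ (as in the proof of (i)) to get $v\mid s(k+s)$ or $v\mid s(k-s)$, then cancel $s$, again obtaining $v\mid k+s$ or $v\mid k-s$. Since $0<k-s<v$, the case $v\mid k-s$ is impossible. Since $0<k+s<2v$, the case $v\mid k+s$ forces $k+s=v$; then $\lambda v=(k-s)(k+s)$ gives $\lambda=k-s$, whence $s^2=k-\lambda=s$, so $s=1$ and $k=v-1$, contradicting $k<v-1$. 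Therefore neither $\gcd(k,v)$ nor $\gcd(s,v)$ can equal $1$.

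I expect (i) to be the main obstacle: a priori nothing forbids both expressions in \eqref{mu-nu} from being integers, and the resolution really rests on the two preliminary facts above --- that $v\mid k^2-s^2$, which lets one strip the $k^2$ term out of $v\mid k(k\pm s)$, and the \emph{sharp} bound $2s^2<v$. Making that bound strict (rather than merely $\le$) is precisely where the two excluded boundary values $k=1$ and $k=v-1$ are used, and the same exclusion is what rules out the degenerate solution $k+s=v$ in the proofs of (ii) and (iii).
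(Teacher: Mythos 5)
Your proof is correct and follows essentially the same route as the paper's: integrality of $\nu$, the identity $\lambda v=(k-s)(k+s)$ to pass between $v\mid k(k\pm s)$ and $v\mid s(k\pm s)$, and the bound $2s^2<v$. The only cosmetic difference is that the paper first normalizes to $k\le v/2$ by taking the complementary design, so that $0<k\pm s<v$ rules out divisibility outright, whereas you work with $k+s<2v$ and eliminate the boundary case $k+s=v$ directly via $s^2=s$.
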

\begin{proof}
	Assume first (taking complements if necessary) that $k\leq\frac{v}{2}$. One can quickly check that $k+s<v$ except when $(v,k,\lambda) = (2,1,0)$ and $0<k-s$ whenever $k>1$. Then, for any $x\in\mathbb{Z}^+$, we have that if $\frac{x(k\pm s)}{v}$ is an integer then $\gcd(x,v) >1$. Using our expression for $\nu$ in \eqref{mu-nu}, we see that $\gcd(k,v)>1$. Noting that $k' = v-k$ and therefore (ii) holds: $\gcd(k',v) = \gcd(k,v)>1$. Now, using \eqref{sym:1} and our requirement that $\sqrt{k-\lambda} = s$, we have the two equations:
	\[\begin{aligned}
	\frac{k(k\pm s)}{v} \mp\frac{s(k\pm s)}{v} &= \lambda.
	\end{aligned}\]
	The integer $\nu$ in \eqref{mu-nu} appears as the first term in one of these equations. Therefore we must have that either $\frac{s(k+ s)}{v}$ or $\frac{s(k-s)}{v}$ must also be an integer, giving us (iii): $\gcd(s,v)>1$. To show (i), consider that if both $\frac{k(k+ s)}{v}$ and $\frac{k(k- s)}{v}$ are integers, then the same must hold for both $\frac{s(k+s)}{v}$ and $\frac{s(k-s)}{v}$. However, this implies $\frac{2s^2}{v}\in\mathbb{Z}^+$, contradicting $s^2 <k \leq \frac{v}{2}$.
\end{proof}
The case where $k=1$ or $k=v-1$ produce LSSDs which are not of interest to us and for the remainder of the paper, we will refer to these designs as degenerate. For a further description of why these designs are degenerate, see Section \ref{degenerate}. The observations that $\gcd(k,v)>1$ and $\gcd(s,v)>1$ are two tools which help us determine more easily which parameters might be feasible for a LSSD with $w>2$. There are many other statements similar to these we can find, but these two will be sufficient for now. Using these, we can immediately rule out many parameters, for instance:
\begin{corollary}
	Assume $w>2$. If there exists a non-degenerate $LSSD(v,k,\lambda;w)$, then $v$ is composite.\qed
\end{corollary}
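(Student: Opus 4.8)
The plan is to read this corollary directly off Lemma~\ref{gcd}. By definition a \emph{non-degenerate} LSSD is one with $1<k<v-1$, which is exactly the hypothesis of Lemma~\ref{gcd} (and we are also given $w>2$), so part~(ii) of that lemma applies and yields $\gcd(k,v)>1$. Observe also that the inequality $1<k<v-1$ forces $v\geq 4$, so it is meaningful to ask whether $v$ is prime or composite.

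Next I would argue by contradiction: suppose $v$ is prime. Then its only positive divisors are $1$ and $v$, so $\gcd(k,v)>1$ forces $\gcd(k,v)=v$, i.e.\ $v\mid k$. But $0<k<v-1<v$, so $v\nmid k$, a contradiction. Hence $v$ must be composite.

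There is essentially no obstacle here: all of the arithmetic has already been carried out in establishing $\gcd(k,v)>1$ in the proof of Lemma~\ref{gcd}, and the only thing to verify is the bookkeeping point that ``non-degenerate'' is precisely the condition $1<k<v-1$ needed to invoke that lemma. (One could instead argue from part~(iii): since $\gcd(s,v)>1$ and $0<s^2<v$ gives $0<s<v$, a prime $v$ would again have to divide the smaller number $s$, which is impossible. Using part~(ii) is simply the shortest route.)
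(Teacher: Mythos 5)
Your proof is correct and matches the paper's intent exactly: the corollary is stated with an immediate \qed precisely because it follows at once from Lemma~\ref{gcd}(ii), since $\gcd(k,v)>1$ together with $1<k<v-1$ rules out $v$ prime. Your bookkeeping of the non-degeneracy hypothesis and the alternative route via part~(iii) are both fine; nothing further is needed.
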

For a further use of these tools to rule out certain families of symmetric designs, see Section \ref{families}.

\section{The association scheme structure}
Let $X$ be a finite set of vertices. A \textit{symmetric d-class association scheme} (see \cite{BCN}) on $X$ is a pair $\mathcal{L} = (X,\mathcal{R})$ where $\mathcal{R} =\left\{R_0,R_1,\dots,R_d\right\}$ is a set of $d+1$ relations on $X$ satisfying the following properties:
\begin{itemize}
	\item $R_0$ is the identity relation;
	\item $\left\{R_0,R_1,\dots, R_d\right\}$ forms a partition of $X\times X$;
	\item $(x,y)\in R_i$ implies $(y,x)\in R_i$;
	\item for $0\leq i,j,k\leq d$ there exist \textit{intersection numbers} $p_{i,j}^k$ such that for any $(x,y)\in R_k$, the number of vertices $z$ for which $(x,z)\in R_i$ and $(z,y)\in R_j$ is equal to $p_{i,j}^k$ independent of our original choice of $x$ and $y$.
\end{itemize}
Often it becomes useful to order the vertices in $X$ and then represent each $R_i$ as a 01-matrix $A_i$ where the $(x,y)$ entry of $A_i$ is 1 if and only if $(x,y)\in R_i$. With this setting in mind, the defining properties above are encoded as:
\begin{itemize}
	\item $A_0 = I$;
	\item $\sum_i A_i = J$;
	\item for all $0\leq i\leq d$, $A_i^T = A_i$;
	\item for all $0\leq i,j,k\leq d$, $A_iA_j = \sum p_{i,j}^k A_k$.
\end{itemize}
The final condition tells us that $\mathbb{A} = \text{span}\left\{A_0,A_1,\dots A_d\right\}$ forms a matrix algebra under standard matrix multiplication. As our matrices are 01-matrices with disjoint support, this \emph{Bose-Mesner algebra} is also closed under Schur (element-wise) products. Using our symmetric property, we note that $p_{i,j}^k = p_{j,i}^k$ telling us that $A_iA_j = A_jA_i$ and our matrices commute with each other. This allows us to simultaneously diagonalize our matrices to give us $d+1$ orthogonal eigenspaces with projection operators $E_0,\dots,E_d$. As both $\left\{A_0,\dots,A_d\right\}$ and $\left\{E_0,\dots,E_d\right\}$ form bases for the Bose-Mesner algebra, there exists unique matrices $P$ and $Q$ so that
\begin{equation}
\label{PQmat}
	A_i = \sum_{j} P_{ji} E_j,\qquad E_j = \frac{1}{\vert X\vert} \sum_{i} Q_{ij}A_i.
\end{equation}
We call $P$ and $Q$ the first and second eigenmatrices, respectively and note here that $P_{0i}$ is the valency of relation $R_i$ and $Q_{0j}$ is the rank of $E_j$. Finally, as our matrix algebra is closed under Schur products, we find that there exist structure constants $q_{i,j}^k$ such that for all $0\leq i,j,k\leq d$:
\[E_i\circ E_j = \frac{1}{\vert X\vert}\sum_k q_{i,j}^k E_k.\]
We call these parameters the Krein parameters of the association scheme. A \textit{$Q$-polynomial} (\textit{cometric}) association scheme is one in which the set $\left\{E_0,E_1,\dots,E_d\right\}$ may be ordered so that $q^{k}_{i,j} = 0$ whenever $k>i+j$ or $k<\vert i- j\vert$ and $q^{k}_{i,j}>0$ whenever $k = i+j$. Finally, we say an association scheme with $Q-$polynomial ordering $E_0,\dots,E_d$ is \textit{$Q$-antipodal} if $q^{k}_{d,d} >0$ when $k = 0$ or $d$ but $q^{k}_{d,d} = 0$ otherwise. Given a $Q$-polynomial ordering $E_0,\dots,E_d$ we find it convenient to order relations so that $Q_{01}>Q_{11}>\dots>Q_{d1}$; we call this the natural ordering.
\begin{theorem}[Van Dam \cite{VanDam}]
	\label{Qpoly}
  	Let $\Gamma$ be a non-degenerate LSSD with adjacency matrix $A$. Then the algebra $\langle A\rangle$ is the Bose-Mesner algebra of a 3-class Q-antipodal association scheme on $X$. Conversely, every $Q$-antipodal 3-class association scheme arises in this way. More specifically, the natural ordering of the relations of any $Q$-antipodal 3-class association scheme is as follows:
	\begin{itemize}
		\item $R_0$ is the identity relation on $X$;
		\item $R_1$ is given by adjacency in a $\mu$-heavy LSSD;
		\item $R_2$ is the union of complete graphs on the fibers induced by $R_1$;
		\item $R_3$ is given by adjacency in a $\nu$-heavy LSSD;
	\end{itemize}
\end{theorem}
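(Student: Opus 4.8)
The plan is to prove both implications by constructing an explicit model of the putative Bose--Mesner algebra from the matrices already appearing in Section~2, and then reading off its eigenmatrices. Throughout I take $w\ge 3$ (for $w=2$ the scheme built below is still $Q$-polynomial, but $q^{3}_{33}=0$, so it is not $Q$-antipodal). Fix a non-degenerate $LSSD(v,k,\lambda;w)$ and assume $\Gamma$ is $\mu$-heavy, its multipartite complement producing the same algebra. Write $A_1=A$ for the adjacency matrix, $A_0=I$, $A_2=I_w\otimes(J_v-I_v)$ for adjacency within fibers, and $A_3=J-A_0-A_1-A_2$ for adjacency in the multipartite complement. These are symmetric $01$-matrices with pairwise disjoint support summing to $J$, and non-degeneracy ($1<k<v-1$, so $v\ge 3$) keeps all four nonzero, so they define relations $R_0,\dots,R_3$ partitioning $X\times X$. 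The content is closure under matrix multiplication: computing $A_1^2$, $A_1A_2$, $A_2^2$ by hand, equation~\eqref{incidence} supplies the coefficient $\lambda(w-1)$ of $A_2$ in $A_1^2$, the defining property~\eqref{munudef} of $\mu,\nu$ supplies the coefficients $\mu(w-2)$ and $\nu(w-2)$ of $A_1$ and $A_3$, and all remaining products follow since $J=\sum_i A_i$ lies in $\mathrm{span}\{A_0,\dots,A_3\}$. Thus $\{R_0,\dots,R_3\}$ is a symmetric (hence commutative) $3$-class scheme with Bose--Mesner algebra $\mathbb A=\mathrm{span}\{A_0,A_1,A_2,A_3\}$; since $A_2$ is recovered from $A^2$ by thresholding its off-diagonal entries (which on $\mu$-heavy $\Gamma$ take the four distinct values $k(w-1),\mu(w-2),\lambda(w-1),\nu(w-2)$ on $R_0,R_1,R_2,R_3$), we get $\langle A\rangle=\mathbb A$.

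To certify $\dim\mathbb A=4$ and prepare the Krein computation I would diagonalise $A_1$: it has eigenvalue $k(w-1)$ on the all-ones vector; $-k$ on the $(w-1)$-dimensional space of fiber-constant vectors of total sum $0$; and, on the $w(v-1)$-dimensional space of vectors summing to $0$ on every fiber, the two roots of $x^{2}-(w-2)s\,x-(w-1)s^{2}=0$, namely $(w-1)s$ and $-s$ (this quadratic is precisely the restriction of the $A_1^2$-identity to that subspace), occurring with multiplicities $v-1$ and $(w-1)(v-1)$ by a trace count ($\operatorname{tr}A_1=0$); since $s\ne k$ by non-degeneracy these four values are distinct. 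Ordering the eigenspaces as all-ones, $(w-1)s$-eigenspace, $(-s)$-eigenspace, fiber-constant, and the relations as $R_0,R_1,R_2,R_3$, one obtains
\[P=\begin{pmatrix}1 & k(w-1) & v-1 & (v-k)(w-1)\\ 1 & (w-1)s & -1 & -(w-1)s\\ 1 & -s & -1 & s\\ 1 & -k & v-1 & -(v-k)\end{pmatrix},\]
and $Q$ from $Q_{ij}=m_jP_{ji}/k_i$ together with $k(v-k)=s^{2}(v-1)$ from~\eqref{sym:2}.

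The remaining claims now reduce to evaluating a handful of Krein parameters through $q_{ij}^{k}=\frac1{|X|}\sum_\ell P_{k\ell}Q_{\ell i}Q_{\ell j}$. With the eigenspace order above ($E_1$ the $(w-1)s$-eigenspace, $E_3$ the fiber-constant one) one checks $q_{11}^{3}=q_{13}^{1}=0$ and $q_{11}^{2},q_{12}^{3}>0$, which says the ordering is $Q$-polynomial, and $q_{33}^{1}=q_{33}^{2}=0$ while $q_{33}^{3}=w-2>0$, which is $Q$-antipodality; each identity collapses once one substitutes $(v-1)s^{2}+k^{2}=kv$, a rearrangement of~\eqref{sym:1}. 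Finally the column of $Q$ indexed by $E_1$ is $\big(v-1,\frac{(v-1)s}{k},-1,-\frac{k}{s}\big)$, strictly decreasing since $s<k$, so the natural ordering of the relations is $R_0,R_1,R_2,R_3$: identity, $\mu$-heavy adjacency, union of cliques on fibers, $\nu$-heavy adjacency, exactly as in the statement.

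For the converse, start from a $Q$-antipodal $3$-class scheme with $Q$-polynomial ordering $E_0,\dots,E_3$. From $E_3\circ E_3=\frac1{|X|}(m_3E_0+q_{33}^{3}E_3)$ and linear independence of the $A_i$ one gets $Q_{i3}^{2}=m_3+q_{33}^{3}Q_{i3}$ for every $i$; so each $Q_{i3}$ is a root of $t^{2}-q_{33}^{3}t-m_3$, and since $Q_{03}=m_3>0$ the two roots are $m_3$ and $-1$, forcing $Q_{i3}\in\{m_3,-1\}$. Hence $M:=\sum_{i:\,Q_{i3}=m_3}A_i=\frac{|X|}{m_3+1}(E_0+E_3)$ is a symmetric $01$-matrix containing the diagonal with $M^{2}=\frac{|X|}{m_3+1}M$, which forces $M$ to be a block-diagonal array of all-ones blocks of a common order $v:=|X|/(m_3+1)$; the blocks are the ``fibers'', $w:=m_3+1$, and the natural ordering identifies $M=A_0+A_2$. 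The scheme is then imprimitive with respect to this equivalence relation, and from its quotient and its subconstituents one reads off that each bipartite graph cut out by $R_1$ between two fibers is $k$-regular with $k=k_1/(w-1)$, that two vertices of a fiber have $\lambda:=p_{11}^{2}/(w-1)$ common $R_1$-neighbours in any other fixed fiber (so the cut is the incidence graph of a symmetric $(v,k,\lambda)$-design), and that $\mu:=p_{11}^{1}/(w-2)$ and $\nu:=p_{11}^{3}/(w-2)$ are the constants of~\eqref{munudef}, with $R_1$ the $\mu$-heavy adjacency. I expect the delicate part to be exactly this last passage --- showing that the global intersection numbers of the scheme really descend, under the fiber partition, to constants local to pairs and triples of fibers (equivalently, that the scheme is imprimitive with respect to $R_0\cup R_2$ and that its quotient scheme and its fiber subconstituents are trivial), and confirming that it is $R_2$, not $R_1$ or $R_3$, that joins the fibers; everything else is bookkeeping with $P$, $Q$, and the identities~\eqref{sym:1}--\eqref{sym:2}.
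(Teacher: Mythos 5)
Your forward direction is correct and is essentially the paper's own argument: you form the four relations, extract the intersection numbers from the LSSD axioms, arrive at the same eigenmatrices $P$ and $Q$, and verify $Q$-polynomiality and $Q$-antipodality by evaluating the same Krein parameters the paper reads off from $L_1^*$ and $L_3^*$. Your explicit diagonalization of $A_1$ (the quadratic $x^2-(w-2)sx-(w-1)s^2$ on vectors summing to zero on each fiber, with the trace count for multiplicities) supplies a derivation of $P$ that the paper only asserts, and the four distinct eigenvalues already give $\langle A\rangle=\mathbb{A}$ cleanly; the side remark about ``thresholding'' the entries of $A^2$ is unnecessary and slightly suspect, since those four off-diagonal values need not be pairwise distinct a priori and thresholding is not an algebra operation. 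Do make explicit that $q_{11}^2>0$ is exactly where non-degeneracy is consumed (it amounts to $s(v-2)>v-2k$, as the paper notes after displaying $L_1^*$).

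The converse is where the proposal has a genuine gap, one you flag yourself. (The paper does not prove this direction either; it defers entirely to Van Dam.) Your extraction of the fiber structure from $E_3\circ E_3$ is sound: each $Q_{i3}$ satisfies $t^2-q_{33}^3t-m_3=0$, hence $Q_{i3}\in\{m_3,-1\}$, so $M=\frac{|X|}{m_3+1}(E_0+E_3)$ is a symmetric $01$-matrix with $M^2=\frac{|X|}{m_3+1}M$, forcing a partition into blocks of size $v=|X|/(m_3+1)$. But the two steps you postpone are the substance of the converse, not bookkeeping: (a) that the non-identity relation inside $M$ is $R_2$ in the natural ordering rather than $R_1$ or $R_3$, and (b) that $p_{11}^0$, $p_{11}^2$, $p_{11}^1$, $p_{11}^3$ split evenly among the $w-1$ (resp.\ $w-2$) relevant fibers so that the local constants $k$, $\lambda$, $\mu$, $\nu$ of \eqref{munudef} exist for each pair and triple of fibers. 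Point (b) does not follow from the intersection numbers alone --- writing $N_{ij}$ for the $X_i\times X_j$ block of $A_1$, the identity $\sum_{j\ne i}N_{ij}N_{ij}^T=p_{11}^0I+p_{11}^2(J-I)$ controls only the sum over fibers, not each summand --- and requires a genuine argument (Van Dam forces each $N_{ij}N_{ij}^T$ to have exactly two eigenvalues using the rank of $E_1$ restricted to pairs of fibers). As written, the converse is a plausible outline rather than a proof; the forward half stands.
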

\begin{proof}
	We give a brief proof of the first direction; for a full proof, see \cite{VanDam}. Let $\Gamma_1 = (X,R_1)$ be a $\mu$-heavy $LSSD(v,k,\lambda)$ with complement design given by $\Gamma_2 = (X,R_3)$. Many of the properties of an association scheme follow trivially from our relations, though the intersection numbers take more work to verify. We claim the following values for each intersection number, listed via the four matrices $L_0,L_1,L_2,L_3$ where $L_i = [p_{ij}^k]_{k,j}$. Note that the $j^{th}$ column of $L_i$ is equal to the $i^{th}$ column of $L_j$ and thus there are only 10 distinct columns in these 4 matrices.
\[\begin{aligned}
L_0 &= \left[\begin{array}{cccc}
1 & 0 & 0 & 0\\
0 & 1 & 0 & 0\\
0 & 0 & 1 & 0\\
0 & 0 & 0 & 1\\
\end{array}\right], & L_1 = \left[\begin{array}{cccc}
0 & k(w-1)       & 0   & 0               \\
1 & \mu(w-2)     & k-1 & (k-\mu)(w-2)    \\
0 & \lambda(w-1) & 0   & (k-\lambda)(w-1)\\
0 & \nu(w-2)     & k   & (k-\nu)(w-2)    \\
\end{array}\right],\\
L_2 &= \left[\begin{array}{cccc}
0 & 0   & v-1 & 0    \\
0 & k-1 & 0   & v-k  \\
1 & 0   & v-2 & 0    \\
0 & k   & 0   & v-k-1\\
\end{array}\right],
& L_3 = \left[\begin{array}{cccc}
    0 & 0             & 0     & (v-k)(w-1)\\
0 & (k-\mu)(w-2)  & v-k   & (v+\mu-2k)(w-2)\\
0 & (k-\lambda)(w-1) & 0     &(v+\lambda-2k)(w-1)\\
1 & (k-\nu)(w-2)  & v-k-1 & (v+\nu-2k)(w-2)\\
\end{array}\right].\\
\end{aligned}\]
The fact that the entries of $L_0$ are well-defined is trivial; the remaining columns of $L_1$ are well-defined by our definition of a LSSD. The values in the columns of $L_3$ follow via considering the complement of $\Gamma_1$ and $L_2$ can be computed by noting that the row sums of each matrix must be constant. 
\newpar
What remains to prove is that this association scheme is $Q$-polynomial. To show this, we must find the Krein parameters. We calculate the first and second eigenmatrices ($P$ and $Q$ respectively) given below:
\begin{equation}\label{PandQ}
P = \left[\begin{array}{cccc}
1&k(w-1)&v-1&(v-k)(w-1)\\
1&\sqrt{k-\lambda}(w-1)&-1&-\sqrt{k-\lambda}(w-1)\\
1&-\sqrt{k-\lambda}&-1&\sqrt{k-\lambda}\\
1&-k&v-1&k-v\\
\end{array}\right],\qquad Q = \left[\begin{array}{cccc}
1 & v-1 & (w-1)(v-1) & w-1\\
1 & \frac{v-k}{\sqrt{k-\lambda}} & -\frac{v-k}{\sqrt{k-\lambda}} & -1\\
1 & -1 & 1-w & w-1\\
1 & \frac{-k}{\sqrt{k-\lambda}} & \frac{k}{\sqrt{k-\lambda}} & -1\\
  \end{array}\right].
  \end{equation}
We can now use this matrix $Q$ to calculate our Krein parameters using standard techniques (see \cite{BCN}). Defining $L_i^* = [q_{i,j}^k]_{k,j}$ similar to before, it is sufficient to check that $L_1^*$ is irreducible tridiagonal (\cite[Prop.~2.7.1(i')]{BCN}) which is given below:
\[L_1^* = \left[\begin{array}{cccc}
0 & v-1 & 0 & 0\\
1 & \frac{(1-w)(2k-v)+(v-2)s}{ws} &\frac{(w-1)\left(s(v-2)+(2k-v)\right)}{ws}& 0\\
0&\frac{s(v-2)+2k-v}{ws} & \frac{s(w-1)(v-2)-(2k-v)}{ws}& 1\\
0& 0 & v-1& 0\\
\end{array}\right].\]
In order to guarantee this matrix is irreducible, we must have $s(v-2)>v-2k$. This will always hold so long as $k>1$ as we assumed in the theorem. Finally, we consider $L_3^*$ shown below:
\[L_3^* = \left[\begin{array}{cccc}
0&0 &0 & w-1\\
0& 0& w-1& 0\\
0& 1& w-2& 0\\
1& 0& 0& w-2\\
\end{array}\right]\]
and note that the final column tells us that $q^1_{d,d} = q^{2}_{d,d} = 0$ giving us that this association scheme is $Q$-antipodal.
\end{proof}
We henceforth use the term ``linked system of symmetric designs" to refer to either the graph $\Gamma$ or to the association scheme it generates as in Theorem \ref{Qpoly}. It is worth noting that we can fulfill the LSSD conditions using degenerate parameters. In this case however, we do not satisfy the requirement $s(v-2)>v-2k$ (noting that $k=1$ as our $k$ comes from the $\mu$-heavy LSSD). Therefore, while it is quite easy to build an association scheme using these parameter sets, that association scheme will not be $Q$-polynomial. This is one reason why we will ignore this case for much of our discussion.
\section{Linked Simplices}
\label{linked simplices}
In this section, we will write $\left\{b_j\right\}$ for the set $\left\{b_1,\dots,b_v\right\}$ for both sets of points and sets of blocks. For our purposes a ``regular simplex" will be taken to be a set of $v$ unit vectors spanning $\mathbb{R}^{v-1}$ with the property that the inner product of any pair of distinct vectors is $\frac{-1}{v-1}$. Let $\mathcal{A} = \left\{a_i\right\}$ and $\mathcal{B} = \left\{b_j\right\}$ be two regular simplices in $\mathbb{R}^{v-1}$. We call $\mathcal{A}$ and $\mathcal{B}$ ``linked simplices" if there exist two real numbers $\gamma$ and $\zeta$ such that for all $0\leq i,j\leq v$, we find $\left<a_i,b_j\right>\in \left\{\gamma,\zeta\right\}$. The next few theorems establish the equivalence of collections of $w$ linked simplices with LSSDs on $w$ fibers.
\begin{theorem}
\label{Association2Simplex}
Consider a $LSSD(v,k,\lambda;w)$ with Bose-Mesner algebra $\mathbb{A}$. The first idempotent $E_1$ in a $Q$-pollynomial ordering of $\mathbb{A}$, appropriately scaled, is the Gram matrix of a set of $w$ linked simplices. In the case $w=2$, $E_2$ scaled similarly is also the Gram matrix of a second set of two linked simplices.
\end{theorem}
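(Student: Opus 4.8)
The plan is to use the second eigenmatrix $Q$ computed in Theorem~\ref{Qpoly}, together with the fact that $E_1$ is a positive semidefinite idempotent of rank $Q_{01} = v-1$ when $w=2$ (and rank $(w-1)(v-1)$ in general). Since $E_1$ is an orthogonal projection, it is the Gram matrix of $wv$ vectors in $\mathbb{R}^{\mathrm{rk}(E_1)}$, each of squared length $(E_1)_{xx} = Q_{01}/|X| = (v-1)/(wv)$; rescaling by $\sqrt{wv/(v-1)}$ produces unit vectors. I would then read off the inner products of these unit vectors directly from the formula $E_1 = \frac{1}{|X|}\sum_i Q_{i1} A_i$: two vertices in the same fiber, related by $R_2$, give inner product proportional to $Q_{21}$; two vertices in distinct fibers, related by $R_1$ or $R_3$, give inner products proportional to $Q_{11}$ and $Q_{31}$ respectively. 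Plugging in the entries of $Q$ from \eqref{PandQ}, one checks that within a fiber the $v$ vectors have pairwise inner product $-1/(v-1)$ after scaling, i.e.\ each fiber is a regular simplex; and that the full-dimensionality of each simplex follows because the $v\times v$ diagonal block of the scaled $E_1$ has rank $v-1$ (its eigenvalues, computed from the restriction of the scheme to a single fiber — a pair of vertices in $R_2$ — are $1$ with multiplicity $v-1$ and $0$ once). Between two fibers the inner product takes exactly the two values coming from $Q_{11}$ and $Q_{31}$, which after scaling are $s/(v-1)$ and $-s/(v-1)$ (or the appropriate signs depending on $\mu$- vs.\ $\nu$-heaviness), so these are linked simplices with $\gamma,\zeta = \pm s/(v-1)$.

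For the $w=2$ addendum: when $w=2$ the scheme is still a $3$-class $Q$-antipodal scheme, and inspecting $Q$ shows that $E_2$ is also positive semidefinite of rank $Q_{02} = (w-1)(v-1) = v-1$. Repeating the computation verbatim with the second column of $Q$ replaced by the third — so the within-fiber inner product now comes from $Q_{22} = 1-w = -1$ and the cross-fiber inner products from $Q_{12} = -(v-k)/s$ and $Q_{32} = k/s$ — one checks again that after the appropriate rescaling each fiber becomes a regular simplex in $\mathbb{R}^{v-1}$ and the two cross-fiber inner products are the two distinct values $\gamma',\zeta'$. I would present this as "the same argument applies" rather than redoing the arithmetic, noting only the replacement of $Q_{\cdot 1}$ by $Q_{\cdot 2}$ and that $Q_{02} = v-1$ guarantees the rescaled $E_2$ gives unit vectors and that each fiber's diagonal block still has rank $v-1$.

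The main obstacle I expect is not the inner-product bookkeeping — that is a direct substitution into \eqref{PandQ} — but rather cleanly justifying \emph{full-dimensionality} of each individual simplex, i.e.\ that the $v$ vectors sitting in one fiber span a $(v-1)$-dimensional subspace rather than living in something smaller. This requires knowing the rank of the principal $v\times v$ submatrix of the (scaled) idempotent indexed by one fiber. I would handle it by observing that the restriction of the association scheme to a single fiber $X_i$ is itself the trivial $1$-class scheme on $v$ points (the relations $R_1, R_3$ vanish there and $R_2$ is the complete graph), so the diagonal block of $E_1$ is a linear combination $\alpha I + \beta J$ of $I$ and $J$; computing $\alpha$ and $\beta$ from the diagonal and off-diagonal entries shows this block equals $\frac{1}{v-1}\bigl(I - \tfrac{1}{v}J\bigr)\cdot$(scalar), whose rank is exactly $v-1$. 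The same remark applies to the block of $E_2$. With full-dimensionality in hand, the unit-norm condition and the two-valued cross inner products complete the identification with linked simplices, and the case $w=2$ for $E_2$ is then immediate by the identical computation.
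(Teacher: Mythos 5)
Your overall route is the same as the paper's: scale $E_1$ by $\frac{vw}{v-1}$ so that it becomes the Gram matrix of $vw$ unit vectors and read the three possible inner products off the second eigenmatrix in \eqref{PandQ}. Two points need repair, one of which is substantive.

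First, you misstate the rank of $E_1$: it equals $Q_{01}=v-1$ for \emph{every} $w$, not ``$v-1$ when $w=2$ and $(w-1)(v-1)$ in general'' --- the latter is $Q_{02}=\operatorname{rank}(E_2)$. This is not a harmless slip, because the single fact that the whole $vw\times vw$ Gram matrix has rank $v-1$ is exactly what makes the $w$ simplices \emph{linked}, i.e.\ all lying full-dimensionally in one common copy of $\mathbb{R}^{v-1}$; it is also exactly what fails for $E_2$ when $w>2$. Your substitute argument via the $v\times v$ diagonal blocks only shows that each fiber's simplex spans \emph{some} $(v-1)$-dimensional subspace of the ambient space, not that two different fibers span the same one, so it cannot by itself deliver linkedness. (It is also unnecessary: once you know the within-fiber inner products are $1$ and $-\frac{1}{v-1}$, each fiber's Gram block is $\frac{v}{v-1}\bigl(I-\frac{1}{v}J\bigr)$ and has rank $v-1$ automatically.) Replace the parenthetical by the correct global statement $\operatorname{rank}(E_1)=Q_{01}=v-1$ and let that do the work, as the paper does.

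Second, the cross-fiber inner products after scaling are $\frac{Q_{11}}{v-1}=\frac{v-k}{(v-1)\sqrt{k-\lambda}}$ and $\frac{Q_{31}}{v-1}=\frac{-k}{(v-1)\sqrt{k-\lambda}}$, not $\pm\frac{s}{v-1}$ (your values would force $v-k=k-\lambda$ and $\lambda=0$). This slip does not affect the conclusion --- linkedness only requires that exactly two values occur --- but these constants are the $\gamma$ and $\zeta$ used later in Section \ref{linked simplices}, so they should be stated correctly. With these two corrections your argument, including the $w=2$ addendum for $E_2$, coincides with the paper's proof.
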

\begin{proof}
Let $(X,\mathcal{R})$ be a $LSSD(v,k,\lambda;w)$ with Bose-Mesner algebra $\mathbb{A}$. Let $\left\{A_i\right\}$ and $\left\{E_j\right\}$ be the bases of Schur and matrix idempotents respectively. We have from \eqref{PQmat}
\[E_j = \frac{1}{\vert X\vert}\sum Q_{ij}A_i.\]
As $E_j$ is an idempotent, $E_j$ is a positive semidefinite (p.s.d.) matrix with rank $Q_{0j}$. Therefore using \eqref{PandQ},
\[G = \frac{vw}{v-1}E_1 = A_0 + \frac{v-k}{(v-1)\sqrt{k-\lambda}}A_1 -\frac{1}{v-1}A_2 -\frac{k}{(v-1)\sqrt{k-\lambda}}A_3\]
is p.s.d. with 1 on the main diagonal. Given that $Q_{01} = v-1$, $G$ is the Gram matrix of a set $Y$ of $vw$ vectors in $\mathbb{R}^{v-1}$. Further there are only three possible inner products among distinct vectors of $Y$ given by:
\[\alpha_1 = \frac{v-k}{(v-1)\sqrt{k-\lambda}}\qquad\alpha_2 =-\frac{1}{v-1}\qquad\alpha_3 = -\frac{k}{(v-1)\sqrt{k-\lambda}}.\]
Since $A_2$ corresponds to complete graphs within fibers, our vectors form a set of $w$ linked regular simplices in $\mathbb{R}^{v-1}$ with $\gamma = \alpha_1$ and $\zeta = \alpha_3$ as inner products between simplices.\newpar
Similarly we have
\[\begin{aligned}
G' = \frac{vw}{v-1}E_2 &= \left(A_0 + \frac{-k}{(v-1)\sqrt{k-\lambda}}A_1 -\frac{1}{v-1}A_2 +\frac{v-k}{(v-1)\sqrt{k-\lambda}}A_3\right).\\
\end{aligned}\]
Therefore $G'$ is also the Gram matrix of a set of vectors coming from $w$ distinct simplices. However, the rank of $E_2$ is $(w-1)(v-1)$ and therefore these simplices are linked only when $w=2$ (we require that each simplex is full-dimensional in the space spanned by vectors in both simplices in order for the simplices to be considered linked). Therefore any pair of fibers from our LSSD will give us another set of linked simplices with inner products $-\alpha_1$ and $-\alpha_3$. This corresponds to choosing one of the two simplices and replacing each $x$ in that simplex by $-x$.
\end{proof}
This tells us that every LSSD gives rise to a set of linked simplices. Before proving the converse, we first prove a lemma arising from the observation that a regular simplex is an equiangular tight frame \cite{Jasper}.
\begin{lemma}
	\label{simplextobasis}
	Let $\left\{a_i\right\}$ be a regular simplex in $\mathbb{R}^{v-1}$ and let $x,y\in \mathbb{R}^{v-1}$. Then
	\[\begin{aligned}
	\sum_i \left<a_i,x\right>\left<a_i,y\right> = \frac{v}{v-1}\left<x,y\right>.
	\end{aligned}\]
\end{lemma}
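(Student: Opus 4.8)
The plan is to reduce the identity to a single statement about the matrix $M:=\sum_i a_i a_i^{T}$. Writing the double sum as a bilinear form,
\[
\sum_i \left<a_i,x\right>\left<a_i,y\right> \;=\; x^{T}\Bigl(\sum_i a_i a_i^{T}\Bigr)y \;=\; x^{T} M y,
\]
so it is enough to show that $M$ acts as the scalar $\tfrac{v}{v-1}$ on $\mathbb{R}^{v-1}$, i.e. $M = \tfrac{v}{v-1}I$. Once that is in hand the lemma follows by substituting back and using $x^{T}Iy=\left<x,y\right>$.

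The first step I would carry out is to observe that the centroid of a regular simplex is the origin. Expanding and using that there are $v(v-1)$ ordered pairs of distinct vectors, each contributing inner product $\tfrac{-1}{v-1}$,
\[
\Bigl\|\sum_i a_i\Bigr\|^2 \;=\; \sum_i \|a_i\|^2 + \sum_{i\neq j}\left<a_i,a_j\right> \;=\; v + v(v-1)\cdot\frac{-1}{v-1} \;=\; 0,
\]
hence $\sum_i a_i = 0$, and therefore $\sum_{i\neq j} a_i = -a_j$ for each fixed $j$. The second step is then to compute the action of $M$ on each simplex vector directly:
\[
M a_j \;=\; \sum_i a_i \left<a_i,a_j\right> \;=\; a_j\cdot 1 + \frac{-1}{v-1}\sum_{i\neq j} a_i \;=\; a_j + \frac{1}{v-1}a_j \;=\; \frac{v}{v-1}\,a_j .
\]
Since $\{a_i\}$ spans $\mathbb{R}^{v-1}$, the symmetric matrix $M$ agrees with $\tfrac{v}{v-1}I$ on a spanning set and hence equals it, completing the argument.

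I do not expect a genuine obstacle here: the only thing to spot is that one should pass from manipulating the scalar double sum to analyzing the frame operator $M$, and that the simplex condition forces $\sum_i a_i=0$, which makes the eigenvector computation immediate. (Alternatively, one could invoke directly that a regular simplex is an equiangular tight frame, as noted just before the lemma, and read off the frame constant $\tfrac{v}{v-1}=\tfrac{\#\text{vectors}}{\dim}$; the computation above is simply the self-contained verification of that fact.)
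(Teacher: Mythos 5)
Your proof is correct, but it takes a genuinely different route from the paper. You work entirely inside $\mathbb{R}^{v-1}$ with the frame operator $M=\sum_i a_ia_i^{T}$: you first deduce $\sum_i a_i=0$ from the Gram data, then verify $Ma_j=\tfrac{v}{v-1}a_j$ on the spanning set $\{a_j\}$, so $M=\tfrac{v}{v-1}I$ and the identity follows as a bilinear form evaluation. The paper instead lifts each $a_i$ to a vector $\alpha_i=\sqrt{\tfrac{v-1}{v}}\bigl[a_i(1),\dots,a_i(v),\tfrac{1}{\sqrt{v-1}}\bigr]\in\mathbb{R}^{v}$, checks that $\{\alpha_i\}$ is an orthonormal basis of $\mathbb{R}^{v}$, and then obtains the identity from Parseval applied to the zero-padded copies of $x$ and $y$. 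The two arguments are essentially dual instantiations of the same fact (a regular simplex is a tight frame, as the paper notes just before the lemma): the paper exhibits the Naimark complement explicitly, which makes the orthonormality structure visible and reusable, while your computation is shorter, coordinate-free, and makes the frame constant $\tfrac{v}{v-1}$ appear as an eigenvalue rather than as a normalization. Either proof is perfectly adequate here; the only point worth double-checking in yours is the final inference that a linear map determined on a spanning set is determined everywhere, which you state correctly.
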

\begin{proof}
	For a vector $x$, let $x(i)$ denote the $i^\text{th}$ element of $x$. For each $1\leq i\leq v$, define $\alpha_i\in \mathbb{R}^{v}$ as the unit vector
	\[\alpha_i = \sqrt{\frac{v-1}{v}}\left[a_i(1),a_i(2),\dots,a_i(v),\frac{1}{\sqrt{v-1}}\right].\]
	For $i\neq i^\prime$,
	\[\left<\alpha_i,\alpha_{i^\prime}\right> = \frac{v-1}{v}\left(\left<a_i,a_{i^\prime}\right> + \frac{1}{v-1}\right)=0.\]
	Therefore $\left\{\alpha_i\right\}$ forms an orthonormal basis for $\mathbb{R}^v$. Now define $\chi,\psi\in\mathbb{R}^v$ as:
	\[\chi = \left[x(1),x(2),\dots,x(v),0\right],\qquad\psi = \left[y(1),y(2),\dots,y(v),0\right].\]
	Then for each $i$,
	\[\begin{aligned}
	\left<\alpha_i, \chi\right> = \sqrt{\frac{v-1}{v}}\left<a_i,x\right>,\qquad\left<\alpha_i, \psi\right> = \sqrt{\frac{v-1}{v}}\left<a_i,y\right>
	\end{aligned}\]
	giving us
	\[\left<x,y\right> =\left<\chi,\psi\right>=\sum_i\left<\alpha_i, \chi\right>\left<\alpha_i, \psi\right> =\frac{v-1}{v}\sum_i\left<a_i,x\right>\left<a_i,y\right>.\]
\end{proof}
Using this lemma, we now seek to build a LSSD on $w$ fibers from a set of $w$ linked simplices. We first provide a construction of the graph $\Gamma$ and then split the verification into two parts: first that $\Gamma$ restricted to a pair of fibers represents a symmetric design, and second that the constants $\mu$ and $\nu$ given by \eqref{munudef} are well-defined. Clearly, we need only consider three fibers in the proofs to follow; the arguments extend to $w$ fibers.
\begin{theorem}
	\label{2design}
  	Let $\left\{a_i\right\}$ and $\left\{b_j\right\}$ be linked simplices in $\mathbb{R}^{v-1}$ with inner products $\gamma$ and $\zeta$. For each $j$, let $B_j = \left\{a_i:\left<a_i,b_j\right> = \gamma\right\}$. Then $(\left\{a_i\right\},\left\{B_j\right\})$ is a symmetric 2-design.
\end{theorem}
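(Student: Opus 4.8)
The plan is to package everything into the $v\times v$ matrix of inner products $M$, defined by $M_{ij}=\langle a_i,b_j\rangle\in\{\gamma,\zeta\}$, and to read off the four design axioms from the single identity supplied by Lemma~\ref{simplextobasis}. Applying that lemma to the simplex $\{b_j\}$ with $x=a_i$, $y=a_{i'}$ gives $(MM^T)_{ii'}=\sum_j\langle a_i,b_j\rangle\langle a_{i'},b_j\rangle=\tfrac{v}{v-1}\langle a_i,a_{i'}\rangle$, and applying it to $\{a_i\}$ gives the mirror statement for $M^TM$. Since a regular simplex on $v$ vectors has Gram matrix $G=\tfrac{v}{v-1}I-\tfrac1{v-1}J$ (and in particular $\sum_i a_i=0$, as $\lVert\sum_i a_i\rVert^2=v-v(v-1)\cdot\tfrac1{v-1}=0$), this reads
\[
MM^T=\tfrac{v}{v-1}G=\tfrac{v^2}{(v-1)^2}I-\tfrac{v}{(v-1)^2}J=M^TM .
\]

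Next I would introduce the incidence pattern $N$, the $01$-matrix with $N_{ij}=1$ iff $\langle a_i,b_j\rangle=\gamma$ (so $N$ records which points lie in which blocks), and write $M=\zeta J+(\gamma-\zeta)N$; note $\gamma\neq\zeta$, since otherwise $M$ would be a scalar multiple of $J$ and the identity $MM^T=\tfrac{v}{v-1}G$ would fail because $G\mathbf 1=0$. From $\sum_i a_i=\sum_j b_j=0$ we get $M\mathbf 1=0$ and $\mathbf 1^TM=0$; substituting $M=\zeta J+(\gamma-\zeta)N$ forces $N$ to have all row sums and all column sums equal to $k:=\tfrac{-v\zeta}{\gamma-\zeta}$. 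This is exactly the first two design conditions: every block contains $k$ points and every point lies in $k$ blocks (and the displayed identities pin down $0<k<v$).

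The core step is then to expand $MM^T=\tfrac{v}{v-1}G$ using $M=\zeta J+(\gamma-\zeta)N$. Because $N$ has constant row and column sums $k$, one has $JN^T=NJ^T=kJ$ and $JJ^T=vJ$, so every cross term is a multiple of $J$ and the identity collapses to
\[
(\gamma-\zeta)^2\,NN^T=\tfrac{v^2}{(v-1)^2}I-c\,J
\]
for an explicit scalar $c$; hence $NN^T=c_1I+c_2J$ with $c_1=\tfrac{v^2}{(v-1)^2(\gamma-\zeta)^2}$. Reading entries: the diagonal of $NN^T$ is $k$, so $c_1+c_2=k$, and every off-diagonal entry equals $\lambda:=c_2=k-c_1$ — i.e. every pair of points lies in exactly $\lambda$ blocks. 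Running the identical computation on $M^TM=\tfrac{v}{v-1}G$ yields $N^TN=c_1I+c_2J$ with the same constants, so $\lvert B_j\cap B_{j'}\rvert=\lambda$ for distinct $j,j'$. Combined with $\lvert\{B_j\}\rvert=\lvert\{a_i\}\rvert=v$, this is precisely a symmetric $2$-$(v,k,\lambda)$ design.

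The one place needing care is the bookkeeping in the penultimate paragraph: one must use the just-established constant row/column sums to see that the cross terms genuinely reduce to multiples of $J$, and one must know $(\gamma-\zeta)^2\neq0$ in order to solve for $NN^T$. Everything else is formal. (Alternatively, after establishing that $(\{a_i\},\{B_j\})$ is a $2$-$(v,k,\lambda)$ design with exactly $v$ blocks one could invoke the classical theorem that such a design is symmetric and therefore has constant block intersections; but the $M^TM$ computation already delivers the block-intersection property directly.)
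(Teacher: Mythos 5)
Your proposal is correct and is essentially the paper's argument recast in matrix language: the identities $MM^T=M^TM=\tfrac{v}{v-1}G$ are exactly the paper's two-way evaluation of $\langle x_s,x_t\rangle$ via Lemma \ref{simplextobasis}, and $M\mathbf 1=\mathbf 1^TM=0$ is the paper's centroid argument for constant block size. The only (harmless) differences are that you verify the point-pair and block-pair conditions separately via $NN^T$ and $N^TN$ where the paper appeals to symmetry of roles, and that you explicitly rule out $\gamma=\zeta$.
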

\begin{proof}
	First we must prove that each block contains a constant number of points. Let $1\leq j\leq v$ be fixed and define $k_j = \vert B_j\vert$. Since the set $\left\{a_i\right\}$ of vectors form a regular simplex, the centroid of those vectors must be the origin. Then, $\sum_i\left<b_j,a_i\right> = \left<b_j,\sum_ia_i\right> = 0$ giving us the equation $k_j\gamma + (v-k_j)\zeta = 0$. Solving this for $k_j$ gives $k_j = \frac{\zeta v}{\gamma-\zeta}$, independent of $j$. Now we will show that any pair of blocks contains a constant number of points in common; swapping roles this gives that any pair of points is contained in a constant number of blocks. Fix $1\leq s,t\leq v$ so that $b_s$ and $b_t$ are two distinct vectors in $\left\{b_j\right\}$ with corresponding blocks $B_s$ and $B_t$ respectively. Define $\lambda_{s,t} = \vert B_s\cap B_t\vert$ and
  \[x_s = \left[\left<a_1,b_s\right>,\left<a_2,b_s\right>,\dots,\left<a_v,b_s\right>\right],\qquad x_t = \left[\left<a_1,b_t\right>,\left<a_2,b_t\right>,\dots,\left<a_v,b_t\right>\right].\]
  Recalling that $k\gamma+(v-k)\zeta = 0$,
  \[\left<x_s,x_t\right> = \lambda_{s,t} \gamma^2 + 2(k-\lambda_{s,t})\gamma\zeta + (v-2k+\lambda_{s,t})\zeta^2=\lambda_{s,t}\left(\zeta-\gamma\right)^2 - v\zeta^2.\]
  We may instead apply Lemma \ref{simplextobasis} to get:
  \[\left<x_s,x_t\right> = \sum_i \left<a_i,b_s\right>\left<a_i,b_t\right>=\frac{v}{v-1}\left<b_s,b_t\right>=-\frac{v}{(v-1)^2}.\]
  Equating these two values gives us
  \[\lambda_{s,t} =\frac{v\zeta^2}{(\zeta-\gamma)^2}-\frac{v}{(v-1)^2(\gamma-\zeta)^2}.\]
  The quantity on the right is independent of $s$ and $t$ and therefore $\lambda_{s,t}$ does not depend on $s$ and $t$. Therefore our block system forms a $2$-design with the above values for $k$ and $\lambda$.
\end{proof}
As both $k$ and $\lambda$ are integers, this gives us restrictions on which inner products are allowed. We solve the system
\[\begin{aligned}
k\gamma + (v-k)\zeta &= 0,\\
\lambda(\zeta-\gamma)^2 - v\zeta^2 &= -\frac{v}{(v-1)^2}\\
\end{aligned}\]
to find that $\zeta^2 =\frac{k}{(v-1)(v-k)}$. Using \eqref{sym:2}, this simplifies to
\begin{equation}
\zeta = \pm\frac{k}{(v-1)\sqrt{k-\lambda}},\qquad\gamma = \mp\frac{v-k}{(v-1)\sqrt{k-\lambda}}.
\end{equation}
These match the previously determined entries of $E_1$ and $E_2$ corresponding to the first and third relations. Our next theorem concerns the existence of $\mu$ and $\nu$, which arise between triples of fibers.
\begin{theorem}
  \label{3fibers}
	Let $\left\{a_i\right\}$, $\left\{b_i\right\}$, and $\left\{c_i\right\}$ be linked regular simplices in $\mathbb{R}^{v-1}$ with inner products $\gamma$ and $\zeta$ as before. For each $1\leq j,k\leq v$, let $B_j = \left\{a_i:\left<a_i,b_j\right> = \gamma\right\}$ and $C_k = \left\{a_i:\left<a_i,c_k\right> = \gamma\right\}$. Then there exists integers $\mu$ and $\nu$ such that 
	\[\vert B_j\cap C_k\vert = \begin{cases}
	\mu  & \left<b_j,c_k\right> = \gamma\\
	\nu  & \left<b_j,c_k\right> = \zeta\\
	\end{cases}\]
	where $\mu$ and $\nu$ are independent of our choice of $j$ and $k$.
\end{theorem}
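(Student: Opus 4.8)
The plan is to follow the template of the proof of Theorem~\ref{2design}, but with the pair of blocks coming from a single auxiliary simplex replaced by the two blocks $B_j$ and $C_k$ coming from the two distinct auxiliary simplices $\{b_i\}$ and $\{c_i\}$. Fix $j$ and $k$ and set $m=\vert B_j\cap C_k\vert$. First I would record that applying Theorem~\ref{2design} to the linked pair $\{a_i\},\{b_i\}$ and to the linked pair $\{a_i\},\{c_i\}$ produces symmetric $2$-designs with the \emph{same} block size $\kappa=\tfrac{\zeta v}{\gamma-\zeta}$ (it depends only on $\gamma$, $\zeta$, $v$), so $\vert B_j\vert=\vert C_k\vert=\kappa$. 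Consequently each index $i\in\{1,\dots,v\}$ lies in exactly one of four classes according to the pair $(\langle a_i,b_j\rangle,\langle a_i,c_k\rangle)\in\{\gamma,\zeta\}^2$, and these classes have sizes $m$, $\kappa-m$, $\kappa-m$, and $v-2\kappa+m$ respectively.

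Next I would introduce the coordinate vectors
\[
x_j=\bigl[\langle a_1,b_j\rangle,\dots,\langle a_v,b_j\rangle\bigr],\qquad
y_k=\bigl[\langle a_1,c_k\rangle,\dots,\langle a_v,c_k\rangle\bigr]
\]
and evaluate $\langle x_j,y_k\rangle$ in two ways. Summing over the four classes gives
\[
\langle x_j,y_k\rangle=m\gamma^2+2(\kappa-m)\gamma\zeta+(v-2\kappa+m)\zeta^2,
\]
which, after collecting terms and using $\kappa(\gamma-\zeta)=-v\zeta$ (equivalently the relation $\kappa\gamma+(v-\kappa)\zeta=0$ from Theorem~\ref{2design}), simplifies to $m(\gamma-\zeta)^2-v\zeta^2$, exactly as in the $2$-design computation. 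On the other hand, Lemma~\ref{simplextobasis} applied to the simplex $\{a_i\}$ together with the vectors $b_j$ and $c_k$ gives $\langle x_j,y_k\rangle=\tfrac{v}{v-1}\langle b_j,c_k\rangle$. Equating the two expressions and solving for $m$ yields
\[
m=\frac{v\zeta^2+\tfrac{v}{v-1}\langle b_j,c_k\rangle}{(\gamma-\zeta)^2}.
\]

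Finally, since $\{b_i\}$ and $\{c_i\}$ are linked, $\langle b_j,c_k\rangle$ is either $\gamma$ or $\zeta$; substituting these two values into the displayed formula defines $\mu$ and $\nu$ respectively, both manifestly independent of $j$ and $k$. Each is an integer because, whenever it is attained, it is literally a cardinality $\vert B_j\cap C_k\vert$ (and if one of the two values is never attained it plays no role). Passing from three fibers to $w$ fibers is immediate, as the argument only ever involves three simplices at a time. I do not anticipate a genuine obstacle here: the only points requiring care are verifying that the two auxiliary designs share the common block size $\kappa$, so that the four class sizes are as claimed, and that $\gamma\neq\zeta$, so that dividing by $(\gamma-\zeta)^2$ is legitimate — both of which are already contained in the parameter computations following Theorem~\ref{2design}.
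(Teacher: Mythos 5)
Your proposal is correct and follows essentially the same route as the paper: compute $\langle x_j,y_k\rangle$ once by partitioning the indices according to the pair of inner products and once via Lemma~\ref{simplextobasis}, then equate and observe that the resulting formula for $\vert B_j\cap C_k\vert$ depends only on whether $\langle b_j,c_k\rangle$ equals $\gamma$ or $\zeta$. You simply make explicit two small details the paper leaves implicit (the common block size $\kappa$ for both auxiliary designs and the algebraic simplification to $m(\gamma-\zeta)^2-v\zeta^2$), and your treatment of integrality matches the paper's.
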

\begin{proof}
	We follow a similar method of calculating an inner product in two ways, then equating the results. Fix $0\leq i,j\leq v$ and let $\eta_{i,j} = \vert B_i\cap C_j\vert$. Define
	\[x_i = \left[\left<a_1,b_i\right>,\left<a_2,b_i\right>,\dots,\left<a_v,b_i\right>\right],\qquad x_j = \left[\left<a_1,c_j\right>,\left<a_2,c_j\right>,\dots,\left<a_v,c_j\right>\right].\]
	Then we have $\left<x_i,x_j\right> =\eta_{i,j}(\gamma-\zeta)^2 - v\zeta^2$ and using Lemma \ref{simplextobasis},
	\[\left<x_i,x_j\right> = \sum_\ell \left<a_\ell,b_i\right>\left<a_\ell,c_j\right>=\frac{v}{v-1}\left<b_i,c_j\right>.\]
	Equating these two values and solving for $\eta_{i,j}$ gives us:
	\[\eta_{i,j} = \frac{1}{(\gamma-\zeta)^2}\left(v\zeta^2 + \frac{v}{v-1}\left<b_i,c_j\right>\right).\]
	While the right side is not independent of $i$ and $j$ as we saw in the previous theorem, it is only dependent on the value of $\left<b_i,c_j\right>$. Using $\nu$ and $\mu$ for $\eta_{i,j}$ when $\left<b_i,c_j\right>$ is $\zeta$ and $\gamma$ respectively, we have:
	\[\begin{aligned}
	 \nu&=\frac{v}{(\gamma-\zeta)^2}\left(\frac{\zeta^2(v-1)^2+\zeta(v-1)}{(v-1)^2}\right),\\
	\mu&=\frac{v}{(\gamma-\zeta)^2}\left(\frac{\zeta^2(v-1)^2+\gamma(v-1)}{(v-1)^2}\right)= \nu + \frac{v}{(\gamma-\zeta)(v-1)}.
	\end{aligned}\]
	Using the values of $\gamma$ and $\zeta$ found previously to make $\lambda$ integral, we find that
	\[\begin{aligned}
	\gamma-\zeta&=\mp\frac{v}{(v-1)\sqrt{k-\lambda}},
	\end{aligned}\]
	giving us that
	\[\begin{aligned}
	\nu&= \frac{k(k\pm \sqrt{k-\lambda})}{v},\\
	\mu&=\nu\mp\sqrt{k-\lambda}.
	\end{aligned}\]
	Since $\mu$ and $\nu$ are both cardinalities of sets, anytime we find non-integral values for $\mu$ and $\nu$ we can conclude that the original set of linked simplices could not exist.
\end{proof}
We finish this section with the following result
\begin{theorem}
  A $LSSD(v,k,\lambda;w)$ is equivalent to a set of $w$ linked simplices in $\mathbb{R}^{v-1}$.
\end{theorem}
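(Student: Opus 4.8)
The plan is to read this theorem as the combination of the forward direction already established in Theorem~\ref{Association2Simplex} with a converse assembled from Theorems~\ref{2design} and~\ref{3fibers}, followed by a check that the two constructions are mutually inverse. First I would fix the notion of ``equivalence'': two sets of $w$ linked simplices in $\mathbb{R}^{v-1}$ are identified when one is carried to the other by a single orthogonal transformation together with a relabelling of the $w$ simplices and of the $v$ vectors in each, while two LSSDs are identified by a graph isomorphism respecting the fibration. With these conventions Theorem~\ref{Association2Simplex} already gives a well-defined map from isomorphism classes of $LSSD(v,k,\lambda;w)$ to equivalence classes of such simplex systems: the idempotent $E_1$ in the natural ordering is canonically attached to the Bose--Mesner algebra, so the scaled Gram matrix $\frac{vw}{v-1}E_1$, and hence the vector configuration it represents, is determined up to an orthogonal transformation.

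For the converse I would start from linked simplices $\mathcal{A}_1,\dots,\mathcal{A}_w$ with inner products $\gamma,\zeta$ and build the $w$-partite graph $\Gamma$: fiber $X_t$ is the vector set of $\mathcal{A}_t$, and for $s\neq t$ a vector $a\in X_s$ is joined to $b\in X_t$ exactly when $\langle a,b\rangle$ equals the distinguished one of the two values, where I fix the convention that this is the value for which the design produced by Theorem~\ref{2design} is non-degenerate and $\mu$-heavy (the other choice gives the multipartite complement, as in Noda's proposition above). Property~(i) is immediate from the construction. Property~(ii) is Theorem~\ref{2design} applied to each of the $\binom{w}{2}$ pairs; the crucial point is that the parameters $k$ and $\lambda$ produced there depend only on $v$, $\gamma$, and $\zeta$, which are global to the system, so all pairwise designs share the same parameters, which is exactly what ``homogeneous'' demands. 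Property~(iii) is Theorem~\ref{3fibers} applied to each triple of fibers, and again the resulting $\mu$ and $\nu$ depend only on $v,\gamma,\zeta$, so they are constants for the whole graph. (Integrality of $k,\lambda,\mu,\nu$ is forced, as already remarked after those theorems.) Thus $\Gamma$ is an $LSSD(v,k,\lambda;w)$, giving a map in the reverse direction; for $w=2$ condition~(iii) is vacuous and the argument is unchanged.

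Next I would verify the two round trips. Going $LSSD\to$ simplices $\to LSSD$: by the formula for $E_1$ in \eqref{PandQ}, an off-diagonal entry of $\frac{vw}{v-1}E_1$ equals $\alpha_1$ precisely on the relation $R_1$ (the $\mu$-heavy adjacency of Theorem~\ref{Qpoly}), $-\frac{1}{v-1}$ on the within-fiber relation $R_2$, and $\alpha_3$ on $R_3$; so under the convention above the reconstructed graph has exactly the $\mu$-heavy adjacency of the original scheme, and we recover the LSSD up to isomorphism. Going simplices $\to LSSD\to$ simplices: the scaled first idempotent of the scheme generated by $\Gamma$ has, again by \eqref{PandQ}, diagonal entries $1$, within-fiber entries $-\frac{1}{v-1}$ (which already matched, each $\mathcal{A}_t$ being a regular simplex), and between-fiber entries $\gamma$ on edges and $\zeta$ on non-edges, i.e.\ it is precisely the Gram matrix of the configuration we started from. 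Two vector systems with the same Gram matrix differ by an orthogonal map, so the original linked simplices are recovered up to equivalence.

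The step I expect to need the most care is not any calculation -- the three preceding theorems have done all of those -- but pinning the correspondence down cleanly: choosing consistently which of $\gamma,\zeta$ plays the role of ``adjacent'' so that both round trips close, and observing that an LSSD and its multipartite complement generate the same association scheme and hence the same linked simplices, so that the reverse map is genuinely a bijection onto equivalence classes of (say) $\mu$-heavy LSSDs. The case $w=2$ also needs a word, since there $E_1$ and $E_2$ both yield linked simplices (related by negating one simplex), exactly as noted in Theorem~\ref{Association2Simplex}. Everything else is bookkeeping, and in particular homogeneity of the induced designs comes for free from the fact that $\gamma$ and $\zeta$ are shared by every pair of simplices.
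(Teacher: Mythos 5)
Your proposal is correct and follows essentially the same route as the paper: the forward direction is Theorem~\ref{Association2Simplex}, and the converse builds the graph by declaring adjacency on one of the two inner products and invokes Theorems~\ref{2design} and~\ref{3fibers} for conditions (ii) and (iii), with the $\gamma>\zeta$ convention yielding the $\mu$-heavy LSSD. The additional care you take with the notion of equivalence and the two round-trip verifications goes beyond what the paper writes down, but it is consistent with, and a refinement of, the paper's argument rather than a different approach.
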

\begin{proof}
  Theorem \ref{Association2Simplex} tells us that given any $LSSD(v,k,\lambda;w)$ we can always build a set of $w$ linked simplices using a scaled version of the first idempotent as the Gram matrix. For the converse, let $\left\{X_1,X_2,\dots,X_w\right\}$ be a set of $w$ regular simplices with inner products $\gamma>\zeta$. Define a graph $\Gamma$ on vertex set $\bigcup_i X_i$ where $x\in X_j$ and $y\in X_\ell$ ($j\neq \ell$) are adjacent if and only if $\left<x,y\right> = \gamma$. Then $\Gamma$ is a multipartite graph on $w$ fibers. Theorem \ref{2design} tells us that the induced graph between a pair of fibers is a symmetric $2$-design. Theorem $\ref{3fibers}$ shows that given any pair of vertices in distinct fibers $x\in X_i$ and $y\in X_\ell$,
  \[\vert\Gamma(x)\cap\Gamma(y)\cap X_j\vert = \begin{cases}
    \mu & x\sim y\\
    \nu & x\not\sim y
  \end{cases}\]
  where $X_j$ is a third fiber. As we assumed $\gamma>\zeta$, this also provides that $\mu>\nu$. Therefore $\Gamma$ is a $\mu$-heavy LSSD and adjacency in $\Gamma$ is the first relation of our proposed association scheme. The third relation (the $\nu$-heavy LSSD) is built from using $\zeta$ to define adjacency.
\end{proof}
\subsection{A geometric classification}
As a collection of linked simplices, and likewise the association scheme arising from it, incorporate both the $\mu$-heavy and $\nu$-heavy LSSDs, it becomes more useful to differentiate between LSSDs with $P_{01}>P_{03}$ or vice versa. As $P_{01}$ is the valency of the $\mu$-heavy LSSD and therefore designates the number of positive inner products a given vector has with other fibers, we classify a LSSD as an ``optimistic LSSD" if $P_{01}>P_{03}$. Likewise we classify the opposite case as a ``pessimistic LSSD". At the level of linked systems, an LSSD is optimistic if $(2k-v)(\mu-\nu)>0$ and pessimistic if $(2k-v)(\mu-\nu)<0$. Consider the following table of possible LSSDs:
\[\begin{tabular}{c|c|c}
& $2k>v$ & $2k<v$\\\hline
$\mu$-heavy & optimistic LSSD & pessimistic LSSD\\\hline
$\nu$-heavy & pessimistic LSSD & optimistic LSSD\\
\end{tabular}\]
Motivated by the natural ordering of relations, we will adopt the convention of focusing on the $\mu$-heavy LSSD. This forces us to allow for $k>\frac{v}{2}$. In fact, this is the case for all known examples of LSSDs.
\section{Bounds for $w$}
\subsection*{\sc Noda Bound}
In Theorem 2 of \cite{Noda}, Noda gives the following bound:
\begin{equation*}\resizebox{\textwidth}{!}{
	$(w-1)\left[(k-2)\lambda\binom{k}{3}-(v-2)\left[(v-k)\binom{\nu}{3} + k\binom{\mu}{3}\right]\right]\leq(v-2)\left[(v-1)\binom{\lambda}{3}+\binom{k}{3}-\left[(v-k)\binom{\nu}{3} + k\binom{\mu}{3}\right]\right]$
}\end{equation*}
with equality if and only if a pair $(X_1,X_2\cup X_3\cup\dots\cup X_f)$ forms a 3-design. If we restrict ourselves to the case of $\mu$-heavy LSSDs, one can verify that this gives:
\[(w-1)(2k-v)\leq(v-2)\sqrt{k-\lambda}.\]
If we have that $2k>v$, then we arrive at the bound:
\[w \leq \frac{(v-2)\sqrt{k-\lambda}}{2k-v}+1.\]
Since the condition becomes vacuous when $2k<v$, the bound only applies to optimistic LSSDs.
\subsection*{\sc Krein Conditions}
In \cite{Mathon}, Mathon shows that the previous bound is equivalent to the requirement that $q_{1,1}^1\geq 0$.
\[\begin{aligned}
q_{1,1}^1 =\frac{(1-w)(2k-v)+(v-2)s}{ws}\geq 0,\qquad w\leq \frac{(v-2)s}{2k-v}+1
\end{aligned}\]
assuming $v<2k$. As before, the bound says nothing about pessimistic LSSDs.
\subsection*{\sc Absolute Bound}
In \cite{MMW}, a bound is provided independent of the sign of $(2k-v)(\mu-\nu)$ relying only on the $Q$-polynomial structure (particularly that $q_{1,1}^3 = 0$ and $q_{1,1}^2 >0$). Let $m_i = \text{rank}(E_i)$. We know from our $Q$ matrix that $m_2 = (w-1)m_1$. Further,
\[E_1\circ E_1 = q_{1,1}^0E_0 + q_{1,1}^1E_1+q_{1,1}^2E_2.\]
Therefore $m_2+m_1 + 1\leq \frac{1}{2}m_1\left(m_1+1\right)$ if $q_{1,1}^1>0$ and $m_2+1\leq \frac{1}{2}m_1\left(m_1+1\right)$ otherwise (c.f. \cite{BCN} Thm. 2.3.3). First consider the case when $q_{1,1}^1>0$:
\[\begin{aligned}
(w-1)m &\leq \frac{m^2-m-2}{2}\\
w&\leq \frac{m}{2}+\frac{1}{2}-\frac{2}{m}.
\end{aligned}\]
Since $m = v-1$ is an integer, this gives $w \leq \frac{v-1}{2}$.\newpar
If instead we have that $q_{1,1}^1 = 0$ then we find $w\leq \frac{v+1}{2}$, so the absolute bound is never tight. But here, know our Krein condition is tight giving us
\[w = \frac{(v-2)s}{2k-v}+1\]
with $2k>v$ ($q_{1,1}^1 = 0$ is not possible unless $2k>v$ since $w\geq 2$). Further, our bound from the eigenspace structure gives
\[w\leq \frac{m+2}{2} = \frac{v+1}{2}.\]
Therefore we must have $\frac{(v-2)s}{2k-v}+1\leq \frac{v+1}{2}$ giving $2(v-2)s\leq(v-1)(2k-v)$ or $2s\leq (2k-v) + \frac{2s}{v-1}$. Since $2s<k<v-1$, this means that $2s\leq (2k-v)$. We can square both sides and preserve the inequality, giving:
 $4(k-\lambda)\leq 4k^2-4kv+v^2$. Using \eqref{sym:2} this gives: 
\[\begin{aligned}
4(k-\lambda)\leq v.
\end{aligned}\]
This means that whenever the Krein condition is tight ($q_{1,1}^1 = 0$), we must have $4(k-\lambda)\leq v\leq 2k$. There is only one known family of constructions which achieve the Krein bound. For this construction (see \ref{kerdock}) $v = 4(k-\lambda)$. Further, the more general family of parameters (Menon) is the only possible family for which this inequality is tight.

\section{Connections to Structures in Euclidean Space}
In section \ref{linked simplices}, we explored the equivalence of LSSDs with a certain geometric structure using the first and second idempotents as Gram matrices. We now explore similar structures which can be built using combinations of these idempotents, though we will not always be able to reverse these constructions. Recall that,
\[E_j = \frac{1}{\vert X\vert}\sum Q_{ij}A_i\]
and the rank of $E_j$ is given by $Q_{0j}$. By considering nonnegative linear combinations of these idempotents, we construct Gram matrices of systems of vectors with desirable properties. As we are interested in low rank Gram matrices, we will only consider combinations of two or three of these idempotents, avoiding $E_2$ as this has rank $(w-1)(v-1)$. Before moving to the examples, we note that the matrix $x_0E_0 + x_1E_1+x_3E_3$ is expressible as $\sum_i y_i A_i$ with the following values for $y_i$:
\[\begin{aligned}
y_0 &= \frac{1}{vw}(x_0 + (v-1)x_1 + (w-1) x_3),\\
y_1 &= \frac{1}{vw}(x_0 + \frac{v-k}{\sqrt{k-\lambda}}x_1 -x_3),\\
y_2 &= \frac{1}{vw}(x_0 - x_1 +(w-1)x_3),\\
y_3 &= \frac{1}{vw}(x_0 -\frac{k}{\sqrt{k-\lambda}}x_1 - x_3).
\end{aligned}\]

\subsection{Equiangular Lines}
This construction is a generalization of de Caen's construction \cite{deCaen} using the Cameron-Seidel scheme. Using the above idempotents, we wish to make a positive semi-definite matrix with low rank with constant diagonal and only one possible magnitude of the main diagonal. Since the rank of our matrix is the sum of the multiplicities of the idempotents we use, we avoid using $E_2$ in order to reduce the rank. Therefore consider the following matrix:
\[G = vw(\alpha E_0 + \beta E_1 + \gamma E_3).\]
This matrix will be a $vw\times vw$ matrix with rank $v+w-1$. In order to get equiangular lines, we must have a constant positive value $c$ such that
\[\begin{aligned}
\Big| \alpha + \beta\left(\frac{v-k}{\sqrt{k-\lambda}}\right) - \gamma\Big| =\Big| \alpha - \beta +(w-1)\gamma\Big| =\Big| \alpha - \beta\left(\frac{k}{\sqrt{k-\lambda}}\right) - \gamma\Big| = c.
\end{aligned}\]
Since $\alpha$, $\beta$, and $\gamma$ must all be positive, we note that $\alpha+\beta\left(\frac{v-k}{\sqrt{k-\lambda}}\right) - \gamma >\alpha - \beta\left(\frac{k}{\sqrt{k-\lambda}}\right)-\gamma$ and therefore we must have
\[c=\alpha+\beta\left(\frac{v-k}{\sqrt{k-\lambda}}\right) - \gamma = -\left[\alpha-\beta\left(\frac{k}{\sqrt{k-\lambda}}\right) - \gamma\right] .\]
This tells us that
\[\beta = \frac{2c\sqrt{k-\lambda}}{v}\qquad \text{and}\qquad \alpha - \gamma = c\left(\frac{2k-v}{v}\right).\]
Here we have one final choice (the sign of $\alpha-\beta+(w-1)\gamma$). Plugging in our value for $\beta$, we find that 
\[\alpha+(w-1)\gamma = c\left(\frac{2\sqrt{k-\lambda}\pm v}{v}\right).\]
However, since we must have $2\sqrt{k-\lambda}<v$, we know that choosing the minus on the right hand side would make the entire side negative. However $\alpha$, $\gamma$, and $(w-1)$ are all positive and therefore this is not possible. Therefore we must use the $+$, giving
\[\gamma=2c\left(\frac{v-k + \sqrt{k-\lambda}}{vw}\right),\qquad
\alpha = c\left(\frac{v+2\sqrt{k-\lambda} - (w-1)\left(v-2k\right)}{vw}\right).\]
Our final requirement is that the main diagonal of $G$ is equal to 1. Using the coefficients of $A_0$ in our expression for $G$, we find
\[\begin{aligned}
c=\frac{1}{2\sqrt{k-\lambda}-1}.
\end{aligned}\]
Scaling by $vw$ for convenience, this gives us the final values
\[\begin{aligned}vw\alpha&=\frac{v+2\sqrt{k-\lambda} - (w-1)(v-2k)}{(2\sqrt{k-\lambda}-1)},\\
 vw\beta&=\frac{2w\sqrt{k-\lambda}}{2\sqrt{k-\lambda}-1},\\
 vw\gamma&=\frac{2v-2k+2\sqrt{k-\lambda}}{2\sqrt{k-\lambda}-1},\\
 \end{aligned}\]
 with inner product $\frac{1}{2\sqrt{k-\lambda} -1}$. As we require $\alpha\geq 0$ for this construction to give a p.s.d. matrix, we need $w<\frac{2(k+s)}{v-2k}+2$ in the pessimistic LSSD case (no such restriction occurs for the optimistic case). Therefore we have the following generalization of de Caen's construction:
 \begin{theorem}
 	Let $\mathcal{L}$ be the association scheme arising from a $LSSD(v,k,\lambda;w)$. If either $\mathcal{L}$ is optimistic, or $w\leq2+\frac{2(k+s)}{v-2k}$ then we can build a set of $vt$ lines in $\mathbb{R}^{v+t-1}$ for any $1\leq t\leq w$. In the pessimistic case with $w> 2+\frac{2(k+s)}{v-2k}$, we can achieve the construction for any $t$ less than or equal to this bound.\qed
 \end{theorem}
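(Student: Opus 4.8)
\emph{Proof proposal.} The plan is to apply the construction carried out in the paragraphs immediately above, but with the role of $\Gamma$ played by the sub-LSSD on a sub-collection of fibers. Recall what that construction yields: from an $LSSD(v,k,\lambda;w)$ one forms $G = vw(\alpha E_0 + \beta E_1 + \gamma E_3)$, which, once $\alpha,\beta,\gamma\geq 0$ is known, is positive semidefinite as a nonnegative combination of idempotents; it has rank $Q_{00}+Q_{01}+Q_{03} = 1+(v-1)+(w-1) = v+w-1$ when the coefficients are strictly positive, unit diagonal, and every off-diagonal entry of absolute value $c$ (explicitly $c = \frac{1}{2\sqrt{k-\lambda}-1}$ as found above). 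A factorization $G = U^{T}U$ then presents $G$ as the Gram matrix of $vw$ unit vectors in $\mathbb{R}^{v+w-1}$ any two of which have inner product $\pm c$, i.e.\ as a set of $vw$ equiangular lines.

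The first step is the observation that for each $2\leq t\leq w$, the subgraph of $\Gamma$ induced on any $t$ of the $w$ fibers is again a (non-degenerate) $LSSD(v,k,\lambda;t)$: properties (i)--(iii) only involve pairs and triples of fibers, and the parameters $k,\lambda$ and the constants $\mu,\nu$ (hence $s=\sqrt{k-\lambda}$) are inherited unchanged. Hence, by Theorem~\ref{Qpoly}, the scheme of this sub-LSSD has first and second eigenmatrices obtained from \eqref{PandQ} by the substitution $w\mapsto t$ (for $t=2$ one has the same four-matrix algebra and the same eigenmatrices directly, and $t=1$ is a single regular simplex). Consequently the entire computation of $\alpha,\beta,\gamma$ and $c$ transfers verbatim with $w\mapsto t$, producing a $vt\times vt$ positive semidefinite matrix $G_t$ with unit diagonal and off-diagonal entries of modulus $c$, whose rank is $1+(v-1)+(t-1) = v+t-1$ whenever its coefficients are strictly positive --- that is, a set of $vt$ equiangular lines in $\mathbb{R}^{v+t-1}$. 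Note that whether a scheme is optimistic or pessimistic depends only on the sign of $2k-v$, so this property is shared by $\Gamma$ and all of its sub-LSSDs.

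It remains to decide for which $t$ the coefficients $\alpha_t,\beta_t,\gamma_t$ are nonnegative. From the displayed formulas (with $w\mapsto t$), $\beta_t$ and $\gamma_t$ are automatically positive since $2\sqrt{k-\lambda}-1>0$, $\sqrt{k-\lambda}>0$, and $v-k+\sqrt{k-\lambda}>0$; the only genuine restriction is
\[
vt\,\alpha_t \;=\; \frac{v+2\sqrt{k-\lambda}-(t-1)(v-2k)}{2\sqrt{k-\lambda}-1}\;\geq\;0,
\qquad\text{equivalently}\qquad (t-1)(v-2k)\;\leq\; v+2\sqrt{k-\lambda}.
\]
If $\mathcal{L}$ is optimistic then $2k>v$, the left-hand side is nonpositive, and the inequality holds for every $t\leq w$; this gives the first assertion. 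If $\mathcal{L}$ is pessimistic then $2k<v$, and the inequality rearranges to $t\leq 2+\frac{2(k+s)}{v-2k}$. Thus if $w$ itself satisfies this bound then every $1\leq t\leq w$ is admissible, while if $w$ exceeds the bound then precisely those $t$ not exceeding it are, which is the remaining assertion. (At the boundary value $vt\,\alpha_t = 0$ the rank of $G_t$ drops to $v+t-2$, but the $vt$ lines still lie in $\mathbb{R}^{v+t-1}$, so the conclusion is unaffected.)

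There is no serious obstacle here; the steps needing a little care are the passage to a sub-collection of fibers --- verifying it is again an LSSD with identical parameters, so that the formulas for $\alpha,\beta,\gamma,c$ transfer under $w\mapsto t$ --- and the small cases $t\in\{1,2\}$, where the ``$3$-class $Q$-antipodal'' description of Theorem~\ref{Qpoly} does not literally apply but the idempotent decomposition used to build $G_t$, and hence the computation of $\alpha_t,\beta_t,\gamma_t,c$, still does.
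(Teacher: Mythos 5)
Your proposal is correct and follows the same route the paper intends: the theorem is stated with \qed because it is a direct corollary of the preceding computation of $\alpha,\beta,\gamma,c$, applied to the sub-LSSD induced on $t$ of the $w$ fibers (which inherits the parameters $v,k,\lambda,s$ and the optimistic/pessimistic type), with the nonnegativity of $\alpha_t$ giving exactly the stated bound on $t$. Your explicit treatment of the restriction to sub-collections of fibers, of the boundary case $\alpha_t=0$, and of the small cases $t\in\{1,2\}$ fills in details the paper leaves implicit, but introduces no new method.
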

\subsection{Mutually Unbiased Bases}
\label{bases}
A second useful system of vectors arises from taking $x_2 = x_3 = 0$ and $x_0 = x_1 = w$. This gives us the Gram matrix
\[G = A_0 + \frac{v-k+\sqrt{k-\lambda}}{v\sqrt{k-\lambda}}A_1 -\frac{k-\sqrt{k-\lambda}}{\sqrt{k-\lambda}}A_3\]
of a set of $w$ orthonormal bases where two vectors from distinct bases have one of two inner products;
\[\beta_1 = \frac{v-k+\sqrt{k-\lambda}}{v\sqrt{k-\lambda}},\qquad
\beta_2= -\frac{k-\sqrt{k-\lambda}}{v\sqrt{k-\lambda}}.\]
Of particular interest is the case when $\vert \beta_1\vert = \vert\beta_2\vert;$ this is precisely when our construction gives a set of mutually unbiased bases. This will be discussed in greater detail in Section \ref{LSSDMUBs}.
\section{Examples}
\subsection{Degenerate case}
\label{degenerate}
We first examine the case when the $Q$-polynomial structure fails as discussed in the proof of Theorem $\ref{Qpoly}$. Arguably the most interesting property of this scheme is that there are no bounds on $w$. In fact, for any choice of $v,w>0$, we can build a LSSD with $w$ fibers by imposing an ordering on the vertices in each fiber and connecting vertices with the same values, giving us a $\mu$-heavy LSSD with the complement giving us the $\nu$-heavy LSSD. Below is a representation of the complementary pairs LSSD(4,1,0;3) and LSSD(4,3,2;3).
\[\begin{tikzpicture}[scale = .6,node distance=3cm,
thin,main node/.style={circle,fill=black,scale = .5}]
\node at (0,5) {$\mu$-heavy LSSD};
\def\x{1.5};
\node[main node,color = red] (11) at (-3*\x,1) {};
\node[main node,color = blue] (12) at (-2.5*\x,2) {};
\node[main node,color = green] (13) at (-2*\x,3) {};
\node[main node] (14) at (-1.5*\x,4) {};
\node[main node] (24) at (1.5*\x,4) {};
\node[main node,color = green] (23) at (2*\x,3) {};
\node[main node,color = blue] (22) at (2.5*\x,2) {};
\node[main node,color = red] (21) at (3*\x,1) {};
\node[main node] (34) at (0*\x,-1) {};
\node[main node,color = green] (33) at (0*\x,-2) {};
\node[main node,color = blue] (32) at (0*\x,-3) {};
\node[main node,color = red] (31) at (0*\x,-4) {};

\draw [-] (11) -- (21) -- (31) -- (11);
\draw [-] (12) -- (22) -- (32) -- (12);
\draw [-] (13) -- (23) -- (33) -- (13);
\draw [-] (14) -- (24) -- (34) -- (14);
\end{tikzpicture}\qquad
\begin{tikzpicture}[scale = .6,node distance=3cm,
thin,main node/.style={circle,fill=black,scale = .5}]
\node at (0,5) {$\nu$-heavy LSSD};
\def\x{1.5};
\node[main node,color = red] (11) at (-3*\x,1) {};
\node[main node,color = blue] (12) at (-2.5*\x,2) {};
\node[main node,color = green] (13) at (-2*\x,3) {};
\node[main node] (14) at (-1.5*\x,4) {};
\node[main node] (24) at (1.5*\x,4) {};
\node[main node,color = green] (23) at (2*\x,3) {};
\node[main node,color = blue] (22) at (2.5*\x,2) {};
\node[main node,color = red] (21) at (3*\x,1) {};
\node[main node] (34) at (0*\x,-1) {};
\node[main node,color = green] (33) at (0*\x,-2) {};
\node[main node,color = blue] (32) at (0*\x,-3) {};
\node[main node,color = red] (31) at (0*\x,-4) {};

\draw [-] (11) -- (22);
\draw [-] (11) -- (23);
\draw [-] (11) -- (24);
\draw [-] (11) -- (32);
\draw [-] (11) -- (33);
\draw [-] (11) -- (34);

\draw [-] (12) -- (21);
\draw [-] (12) -- (23);
\draw [-] (12) -- (24);
\draw [-] (12) -- (31);
\draw [-] (12) -- (33);
\draw [-] (12) -- (34);

\draw [-] (13) -- (32);
\draw [-] (13) -- (31);
\draw [-] (13) -- (34);
\draw [-] (13) -- (22);
\draw [-] (13) -- (21);
\draw [-] (13) -- (24);

\draw [-] (14) -- (22);
\draw [-] (14) -- (23);
\draw [-] (14) -- (21);
\draw [-] (14) -- (32);
\draw [-] (14) -- (33);
\draw [-] (14) -- (31);

\draw [-] (21) -- (32);
\draw [-] (21) -- (33);
\draw [-] (21) -- (34);
\draw [-] (22) -- (31);
\draw [-] (22) -- (33);
\draw [-] (22) -- (34);
\draw [-] (23) -- (32);
\draw [-] (23) -- (31);
\draw [-] (23) -- (34);
\draw [-] (24) -- (32);
\draw [-] (24) -- (33);
\draw [-] (24) -- (31);

\end{tikzpicture}\]
In this case the $\mu$-heavy LSSD has parameters $(v,1,0)$, so we find that $s = \sqrt{k-\lambda} = 1$, $\nu = \frac{k(k-s)}{v} = 0$, and $\mu = \nu+s = 1$. We list the $P$ matrix and $Q$ matrix and use these to further describe the LSSD:
\[P = \left[\begin{array}{cccc}
1&w-1&v-1&(v-1)(w-1)\\
1&w-1&-1&-(w-1)\\
1&-1&-1&1\\
1&-1&v-1&1-v\\
\end{array}\right],\qquad Q = \left[\begin{array}{cccc}
1 & v-1 & (w-1)(v-1) & w-1\\
1 & v-1 & -(v-1) & -1\\
1 & -1 & 1-w & w-1\\
1 & -1 & 1 & -1\\
  \end{array}\right].\]
This means that our first idempotent is given by:
\[E_1 = (v-1)I+(v-1)A_1-A_2-A_3 = (vI-J)\otimes J.\]
If we scale this appropriately to reach a Gram matrix, we find that $E_1$ corresponds to $w$ copies of the same simplex in $\mathbb{R}^v$. This can be seen as well from the fact that $Q_{01} = Q_{11}$ meaning that any simplex vector has inner product 1 with exactly one vector from each of the ``other" simplices, meaning the simplices are just copies of the same simplex. This explains why $w$ is unbounded as we can always copy the same simplex as many times as we would like. This also indicates why this example is not of interest to us as it is not giving $w$ distinct linked simplices.
\subsection{Cameron-Seidel Scheme}
\label{kerdock}
Let $Q_1,Q_2,\dots,Q_w$ be a set of $w$ quadratic forms on $\mathbb{Z}_2^n$ for which $Q_i+Q_j$ is a full rank quadratic form whenever $i\neq j$. Let $\mathcal{Q}_1,\mathcal{Q}_2,\dots,\mathcal{Q}_w$ be cosets of the Reed Muller code as defined above. For each $1\leq i\leq w$, define a set of vectors $V_i$ via
\[V_i = \left\{[q(1),q(2),\dots,q(2^n-1)]\quad\vert\quad q(2^n) = 0\right\}_{q\in\mathcal{Q}_i}.\]
Since $\mathcal{Q}_i$ is closed under complements, we know that $\vert V_i\vert = \frac{1}{2}\vert\mathcal{Q}_i\vert = 2^{n}$. Further, any pair of vectors $v,w\in\bigcup_iV_i$ come from vectors $q_v,q_w\bigcup_i\mathcal{Q}_i$ with last entry $0$, so $wt(v\oplus w) = wt(q_v\oplus q_w)$. Finally, for each $i$ construct the vector set
\[X_i = \left\{\frac{1}{\sqrt{2^n-1}}\left(2v-\mathbbm{1}\right)\vert v\in V_i\right\}.\]
We claim $\left\{X_i\right\}_{i=1..w}$ is a set of linked simplices. To verify this, fix $1\leq i<j\leq w$ and let $x_i,y_i\in X_i$ and $z_j\in X_j$ with corresponding coset vectors $q_x,q_y,$ and $q_z$ respectively. Then,
\[\begin{aligned}
\left<x_i,x_i\right> &= \frac{1}{2^n-1}\left((wt(x_i)+(-1)^2(2^{n}-1-wt(x_i))\right)=1
\end{aligned}\]
giving that every vector in $\bigcup_iX_i$ is a unit vector. Next,
\[\begin{aligned}
\left<x_i,y_i\right> &= \frac{1}{2^n-1}\left((2^n-1)-2wt(q_x\oplus q_y)\right)=-\frac{1}{2^n-1}
\end{aligned}\]
giving us that $X_i$ forms a regular simplex. Finally,
\[\begin{aligned}
\left<x_i,z_j\right> &= \frac{1}{2^n-1}\left((2^n-1)-2wt(q_x\oplus q_w)\right).
\end{aligned}\]
Since $wt(q_x\oplus q_z)\in \left\{2^{n-1}\pm2^{r-1}\right\}$ we have that
\[\left<x_i,w_j\right> = \begin{cases}
\frac{2^{r}-1}{2^{n}-1} & wt(q_x\oplus q_w) = 2^{n-1}-2^{r-1}\\
-\frac{2^r+1}{2^n-1} & wt(q_x\oplus q_w) = 2^{n-1}+2^{r-1}\\
\end{cases}\]
meaning there are two possible angles between simplices.\newpar
Therefore we can build a LSSD on $w$ fibers whenever we have $w$ quadratic forms whose pairwise sums are full rank. We represent each quadratic form as the $n\times n$ matrix giving the corresponding alternating bilinear form. Then, every matrix must differ in the first row in order for their difference to be full rank. This means $w\leq2^{n-1}$ as there are only $2^{n-1}$ possible choices for the first row. This upper bound is achievable whenever $n$ is even \cite{Quadforms}. Below we give an example when $n = 4$ where $Q_i$ is the alternating bilinear form corresponding to the $i^{\text{th}}$ quadratic form.
\[\begin{aligned}
Q_1 &= \left[\begin{array}{cccc}
0 & 0 & 0 & 0\\
0 & 0 & 0 & 0\\
0 & 0 & 0 & 0\\
0 & 0 & 0 & 0\\
\end{array}\right],\qquad Q_2 = \left[\begin{array}{cccc}
0 & 1 & 0 & 0\\
1 & 0 & 0 & 0\\
0 & 0 & 0 & 1\\
0 & 0 & 1 & 0\\
\end{array}\right],\qquad Q_3 = \left[\begin{array}{cccc}
0 & 0 & 1 & 0\\
0 & 0 & 0 & 1\\
1 & 0 & 0 & 1\\
0 & 1 & 1 & 0\\
\end{array}\right],\qquad
Q_4 = \left[\begin{array}{cccc}
0 & 1 & 1 & 0\\
1 & 0 & 1 & 1\\
1 & 1 & 0 & 0\\
0 & 1 & 0 & 0\\
\end{array}\right],
\\
Q_5 &= \left[\begin{array}{cccc}
0 & 0 & 0 & 1\\
0 & 0 & 1 & 1\\
0 & 1 & 0 & 1\\
1 & 1 & 1 & 0\\
\end{array}\right],\qquad
Q_6 = \left[\begin{array}{cccc}
0 & 1 & 0 & 1\\
1 & 0 & 1 & 0\\
0 & 1 & 0 & 0\\
1 & 0 & 0 & 0\\
\end{array}\right],\qquad Q_7 = \left[\begin{array}{cccc}
0 & 0 & 1 & 1\\
0 & 0 & 1 & 0\\
1 & 1 & 0 & 1\\
1 & 0 & 1 & 0\\
\end{array}\right],\qquad Q_8 = \left[\begin{array}{cccc}
0 & 1 & 1 & 1\\
1 & 0 & 0 & 1\\
1 & 0 & 0 & 0\\
1 & 1 & 0 & 0\\
\end{array}\right].\\
\end{aligned}\]
We can form the characteristic vectors $[Q_i(v)]_v$. Below we display $[Q_2(v)]_v$ and $[Q_8(v)]_v$:
\[\begin{aligned}
[Q_2(v)]_v &= \left[\begin{array}{cccccccccccccccc}0 & 0 & 0 & 1 & 0 & 0 & 0 & 1 & 0 & 0 & 0 & 1 & 1 & 1 & 1 & 0\end{array}\right],\\
[Q_8(v)]_v &= \left[\begin{array}{cccccccccccccccc}0 & 0 & 0 & 1 & 0 & 1 & 0 & 0 & 0 & 1 & 1 & 1 & 0 & 0 & 1 & 0\end{array}\right].\\
\end{aligned}\]
Given the characteristic vectors, we can find the cosets of $RM(1,4)$. The coset corresponding to $Q_2(v)$ is given below as the set of rows of the matrix. To improve readability, $+$ denotes a $1$, $-$ denotes a $-1$ and an empty space denotes a $0$.
\[[Q_2(v)]_v+RM(1,4) =\setlength{\arraycolsep}{.8pt}\scalefont{.4}{\left[\begin{array}{cccccccccccccccc}
 &  &  & + &  &  &  & + &  &  &  & + & + & + & + & \\
+ & + & + &  & + & + & + &  & + & + & + &  &  &  &  & +\\
 & + &  &  &  & + &  &  &  & + &  &  & + &  & + & +\\
+ &  & + & + & + &  & + & + & + &  & + & + &  & + &  & \\
+ & + &  & + & + & + &  & + & + & + &  & + &  &  & + & \\
 &  & + &  &  &  & + &  &  &  & + &  & + & + &  & +\\
+ &  &  &  & + &  &  &  & + &  &  &  &  & + & + & +\\
 & + & + & + &  & + & + & + &  & + & + & + & + &  &  & \\
+ & + & + &  &  &  &  & + & + & + & + &  & + & + & + & \\
 &  &  & + & + & + & + &  &  &  &  & + &  &  &  & +\\
+ &  & + & + &  & + &  &  & + &  & + & + & + &  & + & +\\
 & + &  &  & + &  & + & + &  & + &  &  &  & + &  & \\
 &  & + &  & + & + &  & + &  &  & + &  &  &  & + & \\
+ & + &  & + &  &  & + &  & + & + &  & + & + & + &  & +\\
 & + & + & + & + &  &  &  &  & + & + & + &  & + & + & +\\
+ &  &  &  &  & + & + & + & + &  &  &  & + &  &  & \\
+ & + & + &  & + & + & + &  &  &  &  & + & + & + & + & \\
 &  &  & + &  &  &  & + & + & + & + &  &  &  &  & +\\
+ &  & + & + & + &  & + & + &  & + &  &  & + &  & + & +\\
 & + &  &  &  & + &  &  & + &  & + & + &  & + &  & \\
 &  & + &  &  &  & + &  & + & + &  & + &  &  & + & \\
+ & + &  & + & + & + &  & + &  &  & + &  & + & + &  & +\\
 & + & + & + &  & + & + & + & + &  &  &  &  & + & + & +\\
+ &  &  &  & + &  &  &  &  & + & + & + & + &  &  & \\
 &  &  & + & + & + & + &  & + & + & + &  & + & + & + & \\
+ & + & + &  &  &  &  & + &  &  &  & + &  &  &  & +\\
 & + &  &  & + &  & + & + & + &  & + & + & + &  & + & +\\
+ &  & + & + &  & + &  &  &  & + &  &  &  & + &  & \\
+ & + &  & + &  &  & + &  &  &  & + &  &  &  & + & \\
 &  & + &  & + & + &  & + & + & + &  & + & + & + &  & +\\
+ &  &  &  &  & + & + & + &  & + & + & + &  & + & + & +\\
 & + & + & + & + &  &  &  & + &  &  &  & + &  &  & \\
\end{array}\right]}.\]
To form our regular simplex we now choose all vectors with 0 in the last coordinate, discard the last element, replace every 0 with a -1, and then scale by $\frac{1}{\sqrt{15}}$ giving us the vectors (given by rows):
\[\begin{aligned}
X_2 = \frac{1}{\sqrt{15}}\setlength{\arraycolsep}{.8pt}\scalefont{.4}{\left[\begin{array}{rrrrrrrrrrrrrrrr}
- & - & - & + & - & - & - & + & - & - & - & + & + & + & + \\
+ & - & + & + & + & - & + & + & + & - & + & + & - & + & - \\
+ & + & - & + & + & + & - & + & + & + & - & + & - & - & + \\
- & + & + & + & - & + & + & + & - & + & + & + & + & - & - \\
+ & + & + & - & - & - & - & + & + & + & + & - & + & + & + \\
- & + & - & - & + & - & + & + & - & + & - & - & - & + & - \\
- & - & + & - & + & + & - & + & - & - & + & - & - & - & + \\
+ & - & - & - & - & + & + & + & + & - & - & - & + & - & - \\
+ & + & + & - & + & + & + & - & - & - & - & + & + & + & + \\
- & + & - & - & - & + & - & - & + & - & + & + & - & + & - \\
- & - & + & - & - & - & + & - & + & + & - & + & - & - & + \\
+ & - & - & - & + & - & - & - & - & + & + & + & + & - & - \\
- & - & - & + & + & + & + & - & + & + & + & - & + & + & + \\
+ & - & + & + & - & + & - & - & - & + & - & - & - & + & - \\
+ & + & - & + & - & - & + & - & - & - & + & - & - & - & + \\
- & + & + & + & + & - & - & - & + & - & - & - & + & - & - \\
\end{array}\right]}.
\end{aligned}\]
Likewise each $Q_j(v)$ gives us a coset of size 32, which in turn is transformed to a regular simplex $X_j$ in this manner.
\subsubsection*{Symmetric design parameters}
The parameters of this scheme will be $v = 2^{2r}$, $k = 2^{r-1}\left(2^{r}+1\right)$, $\lambda = 2^{r-1}\left(2^{r-1}+1\right)$, $s = 2^{r-1}$, $\nu = 2^{r-2}\left(2^r+1\right)$,  and $\mu=2^{r-2}\left(2^r+3\right)$.
\subsubsection*{Intersection numbers}
We list the unique intersection numbers omitting the trivial $p_{0,i}^j$ parameters. Note that each $p^{j}_{i,k}$ is scaled by a constant based on $i$ and $k$ given in the first row of our table.
\[\begin{array}{c|ccc|cc|c}
j & \nicefrac{p^{j}_{1,1}}{2^{r-2}}			& \nicefrac{p^{j}_{1,2}}{2^{r-1}} 			& \nicefrac{p^{j}_{1,3}}{2^{r-2}} & p^{j}_{2,2} & \nicefrac{p^{j}_{2,3}}{2^{r-1}} & \nicefrac{p^{j}_{3,3}}{2^{r-2}}\\\hline
0 & \left(2^{r+1}+2\right)(w-1)	& 0 							& 0 					& 2^{2r}-1	& 0   							& (2^{r+1}-2)(w-1)\\
1 & \left(2^r+3\right)(w-2)		& \left(2^{r}+1\right)-2^{1-r}	& (2^r-1)(w-2) 			& 0   		& \left(2^r-1\right)   			& \left(2^{r-2}-1\right)(w-2)\\
2 & \left(2^{r}+2\right)(w-1)	& 0 							& 2^{r}(w-1)			& 2^{2r}-2 	& 0 							& (2^{r}-2)(w-1)\\
3 & \left(2^r+1\right)(w-2) 	& 2^{r}+1						& (2^r+1)(w-2) 			& 0   		&\left(2^{r}-1\right)-1 		& \left(2^{r-2}-3\right)(w-2)\\
\end{array}\]
\subsubsection*{Krein parameters}
As with the intersection numbers, we list each unique Krein parameter omitting the trivial $q_{0,i}^j$ parameters. No scaling is needed here.
\[\begin{array}{c|ccc|cc|c}
j & q^{j}_{1,1}			& q^{j}_{1,2}	 						& q^{j}_{1,3} 	& q^{j}_{2,2} 									& q^{j}_{2,3} 	& q^{j}_{3,3}\\\hline
0 & 2^{2r}-1			& 0										& 0 			& (w-1)(2^{2r}-1)								& 0  			& w-1\\
1 & \frac{2^{2r}}{w}-2	& 2^{2r}\left(\frac{w-1}{w}\right)		& 0 			& 2^{2r}\left(\frac{(w-1)^2}{w}\right)-2(w-1)	& w-1			& 0\\
2 & \frac{2^{2r}}{w}	& 2^{2r}\left(\frac{w-1}{w}\right)-2	& 1				& 2^{2r}\left(\frac{(w-1)^2}{w}\right)+2(w-2)	& w-2			& 0\\
3 & 0 					& 2^{2r}-1								& 0 			& (w-2)(2^{2r}-1)								& 0  			& w-2\\
\end{array}\]

\section{Menon parameters}
\label{LSSDMUBs}
Recall from Section $\ref{bases}$ that we found the Gram matrix for a set of bases by adding the first two idempotents of our scheme, giving us
\begin{equation}\label{MubGram}M = w(E_0+E_1) = \left(A_0+\frac{v-k+\sqrt{k-\lambda}}{v\sqrt{k-\lambda}}A_1-\frac{k-\sqrt{k-\lambda}}{v\sqrt{k-\lambda}}A_3\right)\end{equation}
where $k$ is the block size of the $\mu$-heavy LSSD. If $\frac{v-k+\sqrt{k-\lambda}}{v\sqrt{k-\lambda}}$ and $-\frac{k-\sqrt{k-\lambda}}{v\sqrt{k-\lambda}}$ have the same absolute value, then $M$ is the Gram matrix for a set of $w$ mutually unbiased bases in $\mathbb{R}^v$. This will only occur when
\[\begin{aligned}v-2k&=-2\sqrt{k-\lambda}.\\
\end{aligned}\]
This means our LSSD must be optimistic, leading to the following lemma:
\begin{lemma}
	\label{twoside}
	Let $\mathbb{A}$ be Bose-Mesner algebra of an optimistic $LSSD(v,k,\lambda;w)$ with $Q$-polynomial ordering $E_0,E_1,E_2,E_3$ of its primitive idempotents. If $\vert v-2k\vert = 2\sqrt{k-\lambda}$ then $w(E_0+E_1)$ is the Gram matrix of a set of $w$ MUBs in dimension $v$.\qed
\end{lemma}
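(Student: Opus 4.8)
The plan is to recognize the claimed Gram matrix directly from \eqref{MubGram} and then verify the two defining properties of a set of mutually unbiased bases: each fiber carries an orthonormal basis of $\mathbb{R}^v$, and any two vectors from distinct fibers meet at the unbiased angle. First I would note that $M = w(E_0+E_1)$ is a sum of two orthogonal primitive idempotents, hence the matrix of an orthogonal projection; in particular $M$ is positive semidefinite with $\operatorname{rank} M = Q_{00} + Q_{01} = 1 + (v-1) = v$. Since the coefficient of $A_0$ in \eqref{MubGram} is $1$, every diagonal entry of $M$ equals $1$, so $M$ is the Gram matrix of a family of $vw$ unit vectors in $\mathbb{R}^v$; index them as $e_{i,x}$, where $x$ ranges over the $v$ vertices of fiber $X_i$.

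Next I would identify the fibers as orthonormal bases. The coefficient of $A_2$ in \eqref{MubGram} is $0$ and $A_2$ is precisely the ``same fiber, distinct vertices'' relation, so the restriction of $M$ to the index set of a single fiber $X_i$ is exactly $I_v$; thus $\mathcal{B}_i = \{e_{i,x} : x \in X_i\}$ consists of $v$ pairwise orthogonal unit vectors in $\mathbb{R}^v$ and is therefore an orthonormal basis. For a cross-fiber pair $a \in X_i$, $b \in X_h$ (with $i \neq h$) the corresponding entry of $M$ is one of the two off-diagonal coefficients of \eqref{MubGram}. Here I would invoke the hypothesis: under the standing convention $k$ is the block size of the $\mu$-heavy LSSD, and since $\mathbb{A}$ is optimistic this forces $2k > v$, so $|v - 2k| = 2k - v$ and the assumption $|v - 2k| = 2\sqrt{k - \lambda}$ reads $v = 2(k-s)$ with $s = \sqrt{k-\lambda}$. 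Substituting $v = 2(k-s)$ into the two off-diagonal coefficients of \eqref{MubGram} collapses them to $\tfrac{1}{2s}$ and $-\tfrac{1}{2s}$, so $|\langle e_{i,a}, e_{h,b}\rangle| = \tfrac{1}{2s}$ for every such pair.

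Finally I would pin down this constant: expanding $e_{i,a}$ in the orthonormal basis $\mathcal{B}_h$ and applying Parseval gives $1 = \|e_{i,a}\|^2 = \sum_{b \in X_h} \langle e_{i,a}, e_{h,b} \rangle^2 = v \cdot \tfrac{1}{4s^2}$, hence $v = 4s^2$ and $\tfrac{1}{2s} = \tfrac{1}{\sqrt{v}}$; equivalently, combining $v = 2(k-s)$ with \eqref{sym:1} and $s^2 = k - \lambda$ forces the Menon parameters $v = 4s^2$, $k = 2s^2 + s$, $\lambda = s^2 + s$ outright. In either route, every pair $\mathcal{B}_i, \mathcal{B}_h$ satisfies $|\langle u, u'\rangle| = 1/\sqrt{v}$ for all $u \in \mathcal{B}_i$, $u' \in \mathcal{B}_h$, so the bases are mutually unbiased and $M$ is the Gram matrix of the $w$ MUBs $\mathcal{B}_1, \dots, \mathcal{B}_w$ in $\mathbb{R}^v$. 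I do not anticipate a genuine obstacle; the only points requiring care are checking that the within-fiber restriction of $M$ is literally $I_v$ and, more importantly, the sign bookkeeping in $|v - 2k|$ — this is exactly where ``optimistic'' is used, since the opposite sign $v - 2k = 2s$ would leave the two cross-fiber inner-product magnitudes unequal and the conclusion would fail.
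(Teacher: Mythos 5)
Your proposal is correct and follows essentially the same route as the paper, which derives the lemma from the observation that the coefficient of $A_2$ in \eqref{MubGram} vanishes (so each fiber is an orthonormal basis) and that the two cross-fiber coefficients $\beta_1,\beta_2$ have equal magnitude exactly when $v-2k=-2\sqrt{k-\lambda}$, i.e.\ when the LSSD is optimistic under the $\mu$-heavy convention. Your additional Parseval step confirming that the common magnitude is necessarily $1/\sqrt{v}$ is a nice explicit touch that the paper leaves implicit (it recovers $v=4u^2$ only later, in the projective-restriction subsection), but it does not change the argument.
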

It is important to note here that there exist pessimistic LSSDs such that $v-2k = 2\sqrt{k-\lambda}$. One such example is a specific case of the degenerate parameters in \ref{degenerate} given by $(v,k,\lambda)=(4,1,0)$ with the graph $\Gamma_1$ displayed below.
\[\begin{tikzpicture}[scale = .5,node distance=2cm,
thin,main node/.style={circle,fill=black,scale = .5}]

\node[main node] (11) at (-3,1) {};
\node[main node] (12) at (-2.5,2) {};
\node[main node] (13) at (-2,3) {};
\node[main node] (14) at (-1.5,4) {};
\node[main node] (24) at (1.5,4) {};
\node[main node] (23) at (2,3) {};
\node[main node] (22) at (2.5,2) {};
\node[main node] (21) at (3,1) {};
\node[main node] (34) at (0,-1) {};
\node[main node] (33) at (0,-2) {};
\node[main node] (32) at (0,-3) {};
\node[main node] (31) at (0,-4) {};

\draw [-] (11) -- (21) -- (31) -- (11);
\draw [-] (12) -- (22) -- (32) -- (12);
\draw [-] (13) -- (23) -- (33) -- (13);
\draw [-] (14) -- (24) -- (34) -- (14);
\end{tikzpicture}\]
We can easily see that $v-2k = 2 = 2\sqrt{k-\lambda}$. However, the sum of the first two eigenspaces gives
\[M = 3(E_0 + E_1) =\frac{1}{3}A_0 -\frac{1}{4}A_1\]
which is not the Gram matrix of a set of MUBs. In fact, any Menon parameter set with $v/4$ odd will satisfy $\vert v-2k\vert = 2\sqrt{k-\lambda}$ however none of these will produce MUBs. Therefore our restriction to optimistic LSSDs is required and we cannot say that any LSSD satisfying $\vert v-2k\vert = 2\sqrt{k-\lambda}$ will give us MUBs using this construction.
\subsection{Projective Restriction}
We now take a closer look at our restriction $v-2k = -2\sqrt{k-\lambda}$. First note we can square both sides to get
\[\begin{aligned}
4(k-\lambda)&=v^2-4k(v-k).
\end{aligned}\]
Using \eqref{sym:2}, this gives $v=4(k-\lambda)$ where we apply \eqref{sym:1} to get
\[k^2+k+\lambda=4\lambda(k-\lambda).\]
Solving this for $k$ gives $k=\frac{4\lambda+1}{2}\pm\frac{\sqrt{4\lambda+1}}{2}$ requiring $\frac{1}{2}\pm\frac{1}{2}\sqrt{4\lambda+1}$ to be an integer. Therefore $\sqrt{4\lambda+1}$ must be an odd integer. Assume $4\lambda+1 = (2u-1)^2$ for some positive integer $u$. Then $\lambda = u^2-u$ and
\[\begin{aligned}
k&=2u^2-(2\mp 1)u+\left(\frac{1\mp1}{2}\right).\\
\end{aligned}\]
If we re-parameterize the second family to avoid the trivial $(0,0,0)$ design when $u=1$, we get the complementary families:
\[\begin{aligned}
\lambda &= (u-1)u\qquad\qquad &\lambda^\prime &= (u+1)u\\
k&=(2u-1)u\qquad \text{and}\qquad&k^\prime &= (2u+1)u\\
v&= 4u^2\qquad \qquad&v^\prime &= 4u^2.
\end{aligned}\]
As we are restricted to optimistic LSSDs, we must use the second family for our $\mu$-heavy LSSD and the first will arise in the $\nu$-heavy complement.
\subsection{Combinatorial Restrictions}
We now return to the original graph $\Gamma$ and consider feasibility conditions based on $\mu$ and $\nu$. From \eqref{mu-nu},
\[\begin{aligned}
s=\sqrt{k-\lambda}=u,\qquad\nu =\frac{k(k-s)}{v}=u^2+\frac{u}{2},\qquad\mu=\nu+s=u^2+\frac{3}{2}u.
\end{aligned}\]
Therefore $\nu$ (and $\mu$) are integral if and only if $u$ is even, resulting in the following theorem
\begin{theorem}
	\label{MubLssdrestrictions}
	Let $\Gamma$ be an optimistic $LSSD(v,k,\lambda;w)$. If $v-2k = -2\sqrt{k-\lambda}$ then:
	\begin{enumerate}[(i)]
		\item $v = 4u^2$
		\item $k = 2u^2+u$
		\item $\lambda = u^2+u$
		\item $w\leq 2u^2$
	\end{enumerate}
	Further, if $u$ is odd, then $w = 2$.\qed
\end{theorem}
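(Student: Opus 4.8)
The plan is to treat the parametrization (i)--(iii) as essentially settled by the computations immediately preceding the theorem, then derive (iv) from a single Krein inequality, and finally dispatch the parity clause by an integrality argument on $\nu$.

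First I would recall the derivation of (i)--(iii): squaring the hypothesis $v-2k=-2\sqrt{k-\lambda}$ and combining with (\ref{sym:1}) and (\ref{sym:2}) forces $v=4(k-\lambda)$ and then $k^2+k+\lambda=4\lambda(k-\lambda)$; solving this quadratic for $k$ shows $\sqrt{4\lambda+1}$ must be an odd positive integer, say $2u-1$ with $u\in\mathbb{Z}^{+}$, and the optimistic branch ($\mu$-heavy, hence $2k>v$) selects exactly $v=4u^2$, $k=2u^2+u$, $\lambda=u^2+u$. I would also record the consequences $s=\sqrt{k-\lambda}=u$ and $2k-v=2u$ for use below.

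For (iv) I would pass to the $Q$-antipodal $3$-class association scheme attached to $\Gamma$ by Theorem~\ref{Qpoly} and invoke the Krein condition $q_{1,1}^{1}\ge 0$ read off from the tridiagonal matrix $L_1^{*}$ there, namely $q_{1,1}^{1}=\frac{(1-w)(2k-v)+(v-2)s}{ws}$. Substituting $2k-v=2u$, $s=u$, and $v=4u^2$ collapses this to $q_{1,1}^{1}=\frac{4u^2-2w}{w}$, so nonnegativity is precisely $w\le 2u^2$. (For $u=1$ the underlying design is degenerate and the Krein route is unavailable, but there $w=2$ by the final clause anyway, so no separate argument is needed.) One could equivalently route through the Noda bound or the absolute bound of Section~``Bounds for $w$'', but the direct substitution is the shortest path.

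Finally, for the clause ``$u$ odd implies $w=2$'', I would use the combinatorial value $\nu=\frac{k(k-s)}{v}=u^2+\tfrac{u}{2}$ already obtained. If $w\ge 3$, then axiom (iii) / equation~(\ref{munudef}) applied to a triple of distinct fibers exhibits $\nu$ as a cardinality $\lvert\Gamma(a)\cap\Gamma(b)\cap X_h\rvert$, hence a nonnegative integer; but $u^2+\tfrac{u}{2}\notin\mathbb{Z}$ when $u$ is odd, a contradiction. Therefore $w\le 2$, and since always $w\ge 2$ we get $w=2$. I do not expect a genuine obstacle here: every ingredient is already in the text or is a one-line substitution; the only care needed is bookkeeping --- keeping the sign $2k-v=+2u$ straight, noting $u\in\mathbb{Z}^{+}$ so that the parity of $u$ is meaningful, and observing that the small/degenerate case $u=1$ (the $(4,3,2)$ design) is covered by the parity argument rather than the Krein one.
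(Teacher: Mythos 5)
Your proposal is correct and follows the same route as the paper: (i)--(iii) come from squaring $v-2k=-2\sqrt{k-\lambda}$, reducing via \eqref{sym:1} and \eqref{sym:2} to $v=4(k-\lambda)$ and the quadratic in $k$, with the optimistic hypothesis selecting the $k>v/2$ branch; (iv) is the Krein/Noda bound $w\leq\frac{(v-2)s}{2k-v}+1$, which your substitution correctly collapses to $w\leq 2u^2$; and the parity clause is exactly the paper's observation that $\nu=u^2+\tfrac{u}{2}$ must be an integer once $w\geq 3$. Your explicit handling of the $u=1$ degenerate case is a small point the paper glosses over, but it does not change the argument.
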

One consequence of this result is that we cannot take an arbitrary set of MUBs and project the space down one dimension to get the adjacency matrix of a linked system. This can be seen when the dimension is 4 times an odd square. For example in $\mathbb{R}^4$ we can achieve 3 MUBs. However, any such projection would gives us an optimistic linked system with $v=4$ and $w = 3$. Our final statement in our theorem tells us that since $v = 4u^2$ with odd $u$, $w$ must equal 2. The upper bound for $w$ is achieved whenever $u$ is a power of two using the Cameron Seidel scheme. The question of a better upper bound for $u$ even but not a power of two is still open. It was conjectured that the upper bound of $w$ will depend solely on the highest power of $2$ which divides $v$ since the same conjecture was made for MUBs. However, we will show later that this is not true as we can find a sequence of parameter sets (namely Menon with $v = 16(2t+1)^4$) for which $w\rightarrow \infty$ as $t\rightarrow \infty$.
\subsection{Hadamard Matrix Equivalence}
We have seen that, while we can build MUBs from certain LSSDs, we cannot always build LSSDs from MUBs. In this section, we establish an equivalence between these LSSDs with sets of regular unbiased Hadamard matrix (see \cite{Kharaghani} for more detailed information on unbiased Hadamard matrices). A real Hadamard matrix is a $v\times v$ matrix $H$ with entries $\pm 1$ such that $HH^T = vI$. $H$ is a \textit{regular Hadamard matrix} if $HJ = cJ$ for some constant $c$. Two Hadamard matrices $H_1$ and $H_2$ are \textit{unbiased} if $\frac{1}{v}H_1H_2^T$ is itself a Hadamard matrix. Finally, a set of Hadamard matrix matrices are unbiased if each pair is unbiased. Using these definitions, consider the following:
\begin{theorem}
	\label{LSSDtoHad}
	Let $\Gamma$ be an optimistic $LSSD(v,k,\lambda;w)$. If $\vert v-2k\vert = -2\sqrt{k-\lambda}$, then there exists a set of $w-1$ real unbiased regular Hadamard matrices.
\end{theorem}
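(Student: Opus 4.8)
The plan is to read the Hadamard matrices directly off the set of mutually unbiased bases produced by Lemma \ref{twoside}. Since $\Gamma$ is optimistic and $v-2k=-2\sqrt{k-\lambda}$ (equivalently $2k-v=2s$ with $s=\sqrt{k-\lambda}$), that lemma says $M=w(E_0+E_1)$ is the Gram matrix of $w$ mutually unbiased orthonormal bases $\mathcal B_1,\dots,\mathcal B_w$ of $\mathbb R^v$, the basis $\mathcal B_i$ consisting of the vectors indexed by fiber $X_i$. First I would record each basis as the columns of a $v\times v$ real orthogonal matrix $U_i$, so that $U_i^TU_i=I$, and since the bases are mutually unbiased every entry of $U_i^TU_j$ has absolute value $1/\sqrt v$ whenever $i\neq j$ (recall $\sqrt v = 2u$ and the off-fiber Gram entries are $\pm(k-s)/(vs)=\pm 1/\sqrt v$ under the hypothesis).

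Next I would single out $\mathcal B_1$ as a reference and set, for $i=2,\dots,w$,
\[
\tilde H_i \;:=\; \sqrt v\,U_i^TU_1 .
\]
Three points need checking. (a) $\tilde H_i$ has entries $\pm1$: they are $\sqrt v$ times the inner products between $\mathcal B_i$ and $\mathcal B_1$, all of which equal $\pm1/\sqrt v$. (b) $\tilde H_i$ is Hadamard: $\tilde H_i\tilde H_i^T=v\,U_i^TU_1U_1^TU_i=v\,U_i^TU_i=vI$. (c) $\tilde H_i$ is regular: its $+1$ positions are exactly the pairs $(x,y)$ with $x\in X_i$, $y\in X_1$ adjacent in the $\mu$-heavy LSSD, so $\tilde H_i=2B-J$ where $B$ is the incidence matrix of the $(v,k,\lambda)$ symmetric design carried by the induced bipartite graph between $X_i$ and $X_1$; since $B$ has all row and column sums equal to $k$, we get $\tilde H_iJ=J\tilde H_i=(2k-v)J=2sJ$, and here $2s=\sqrt v$, as it must for a regular Hadamard matrix of square order. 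Then I would check pairwise unbiasedness: for $2\le i<j\le w$,
\[
\tfrac{1}{\sqrt v}\,\tilde H_i\tilde H_j^T \;=\; \tfrac{1}{\sqrt v}\,(\sqrt v\,U_i^TU_1)(\sqrt v\,U_j^TU_1)^T \;=\; \sqrt v\,U_i^TU_1U_1^TU_j \;=\; \sqrt v\,U_i^TU_j ,
\]
and the right-hand side is a $\pm1$ matrix by mutual unbiasedness of the pair $\mathcal B_i,\mathcal B_j$, and is Hadamard by the same orthogonality computation as in (b); so $\tilde H_i$ and $\tilde H_j$ are unbiased, giving the asserted $w-1$ real regular unbiased Hadamard matrices.

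The one place I expect to need care is step (c): one must be sure the sign pattern of $U_i^TU_1$ is genuinely $2B-J$ for the design incidence matrix $B$ — that is, that "$+$'' tracks the $\mu$-heavy relation $A_1$ and "$-$'' the $\nu$-heavy relation $A_3$ in the expansion of $M$ — so that regularity is inherited for free from the constant block and replication numbers of a symmetric design rather than needing a separate argument (and noting that the transpose orientation of $B$ is again a symmetric design with the same parameters). It is also worth remarking explicitly why the count is $w-1$ and not $w$: fixing a reference fiber is exactly what turns $w$ fibers into $w-1$ Hadamard matrices (the reference fiber would contribute $\sqrt v\,I$, not a $\pm1$ matrix), and choosing a different reference yields an equivalent set. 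Everything else is bookkeeping with orthogonal matrices.
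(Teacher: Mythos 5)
Your proposal is correct and follows essentially the same route as the paper: both read the $\pm1$ matrices off the MUB Gram matrix $w(E_0+E_1)$ from Lemma \ref{twoside}, get the Hadamard property from orthonormality, regularity from the constant row/column sums $k$ of the symmetric design between fibers, and unbiasedness by composing transition matrices. The only cosmetic difference is that you obtain $U_i^TU_1U_1^TU_j=U_i^TU_j$ directly from orthogonality of $U_1$, where the paper derives the equivalent identity $M_{i,k}M_{k,j}=M_{i,j}$ from the idempotent relation $G^2=wG$ on a three-fiber subscheme.
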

\begin{proof}
	Let $(X,\mathcal{R})$ be the association scheme arising from $\Gamma$ with Bose-Mesner algebra $\mathbb{A}$. Let $\left\{E_0,E_1,E_2,E_3\right\}$ be the set of idempotents under the natural $Q$-polynomial ordering. From Lemma \ref{twoside}, $G = w(E_0+E_1)$ is the Gram matrix of a set of $w$ MUBs in $\mathbb{R}^v$. Let the $w$ MUBs be given by the columns of the $w$ unitary matrices $\left\{M_1,\dots,M_w\right\}$ and without loss of generality assume $M_1 = I$. Then any column from another $M_i$ $(i\neq 1)$ must have entries $\pm \frac{1}{\sqrt{v}}$. For $1<i\leq w$, let $H_i = \sqrt{v}M_i$. First note that $H_iH_i^T = vM_iM_i^T = vI$, therefore for each $1<i\leq w$, $H_i$ is a Hadamard matrix. Now consider that the first $v$ rows of $G$ will have the block form
	$\left[\begin{array}{ccccc}
	I & M_2 & M_3 & \dots & M_{w}\\
	\end{array}\right].$
	However from \eqref{MubGram}, we have that the positive (resp., negative) entries of $M_i$ represent adjacency between vertices in the first and $i^\text{th}$ fibers of the $\mu$-heavy (resp., $\nu$-heavy) LSSD. Therefore each $M_i$ must have $k$ positive entries and $v-k$ negative entries giving that $H_i$ must be regular. Now define $\binom{w}{2}$ matrices $M_{i,j}$ where $M_{i,j}$ is the unitary matrix representing Basis $j$ when Basis $i$ is taken to be the standard basis (so $M_{1,j} = M_j$). Then we repeat all previous arguments to show that $G$ has the block form:
	\[G = \left[\begin{array}{ccccccc}
	I & M_{1,2} & \dots & M_{1,w}\\
	M_{2,1} & I & \dots & M_{2,w}\\
	\vdots & \vdots  & \ddots & \vdots\\
	M_{w,1} & M_{w,2} & \dots & I\\
	\end{array}\right]\]
	where $\sqrt{n}M_{i,j}$ is a Hadamard matrix for all $1\leq i\neq j\leq w$. Now consider a second association scheme $\mathcal{L}^\prime$ arising from the subgraph of $\Gamma$ induced on three distinct fibers $X_i$, $X_j$, and $X_k$. The matrix $G^\prime = w(E_0^\prime + E_1^\prime)$ will have the form:
		\[G^\prime = \left[\begin{array}{ccccccc}
	I & M_{i,j} & M_{i,k}\\
	M_{j,i} & I  & M_{j,k}\\
	M_{k,i} & M_{k,j} & I\\
	\end{array}\right].\]
	Noting that $G^2 = wG$, the block in the $(1,2)$ block of $G^2$ gives us that
	\[2M_{i,j} + M_{i,k}M_{k,j} = 2M_{i,j}\qquad \text{ or } \qquad M_{i,k}M_{k,j} = M_{i,j}.\]
	Therefore, if we return to the original $LSSD$ and define $H_{i,j} = \sqrt{v}M_{i,j}$ then we find that $\frac{1}{\sqrt{v}}H_i^TH_j = H_{i,j}$. Therefore the set $\left\{H_2,\dots,H_w\right\}$ is a set of $w-1$ regular unbiased Hadamard matrices.
\end{proof}
We now seek to show the converse:
\begin{theorem}
	Assume $w>2$. Let $\left\{H_2,\dots,H_w\right\}$ be $w-1$ regular unbiased Hadamard matrices of side length $v$. Then there exists an optimistic $LSSD(v,k,\lambda;w)$.
\end{theorem}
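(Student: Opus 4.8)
The plan is to reverse the construction in Theorem \ref{LSSDtoHad}. Starting from the $w-1$ regular unbiased Hadamard matrices $H_2,\dots,H_w$, I would first set $H_1 = J$ (wait — $J$ is not Hadamard; rather set $H_1$ to be any fixed regular Hadamard matrix, or more cleanly work with the normalized unitaries $M_i = \frac{1}{\sqrt v}H_i$ and adjoin $M_1 = I$). The key is to recover the $w$ MUBs and then the LSSD graph. For each ordered pair $i\neq j$ define $M_{i,j} = \frac{1}{\sqrt v}H_i^T H_j$ when $i,j\neq 1$, $M_{1,j} = M_j$, $M_{i,1} = M_i^T$, and $M_{i,i} = I$; each $\sqrt v M_{i,j}$ is a regular Hadamard matrix by the unbiasedness hypothesis (and the fact that a product of regular Hadamard matrices, suitably normalized, is regular, since $HJ = cJ$ forces $c = \pm\sqrt v$ and regularity is preserved under the products involved). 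Assemble the $wv\times wv$ block matrix $G$ with these blocks, with identity blocks on the diagonal. I would then show $G$ is positive semidefinite of rank exactly $v$ and that $\frac1w G$ is idempotent: the block relation $M_{i,k}M_{k,j} = M_{i,j}$ (the ``cocycle'' condition) is exactly what makes $G^2 = wG$, so $G/w$ is an orthogonal projection; the rank is $v$ because the first block-row already contains $I$. Hence $G$ is the Gram matrix of $w$ orthonormal bases of $\mathbb{R}^v$, pairwise mutually unbiased.

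Next I would extract the LSSD. Since $H_1$ is regular with row sum $c$ and $v = c^2$ would force $v$ a square — more precisely, regularity of a Hadamard matrix of side $v$ forces $v = 4u^2$ with row sums $\pm 2u$, so set $v = 4u^2$, $k = 2u^2+u$, $\lambda = u^2+u$ (the optimistic Menon parameters from Theorem \ref{MubLssdrestrictions}); one checks $\sqrt{k-\lambda} = u$ and $v-2k = -2u$. Define the graph $\Gamma$ on $\bigcup_i X_i$, $|X_i| = v$, by declaring $a\in X_i$ adjacent to $b\in X_j$ ($i\neq j$) precisely when the corresponding entry of $M_{i,j}$ equals the \emph{positive} value $\frac{1}{\sqrt v} = \frac{v-k}{v\sqrt{k-\lambda}}\cdot\frac{1}{?}$ — more carefully, I would invoke equation \eqref{MubGram}: the two off-diagonal inner products $\beta_1 = \frac{v-k+\sqrt{k-\lambda}}{v\sqrt{k-\lambda}}$ and $\beta_2 = -\frac{k-\sqrt{k-\lambda}}{v\sqrt{k-\lambda}}$ have equal absolute value $\frac1{\sqrt v}$ exactly in the Menon case, so the sign pattern of each $M_{i,j}$ is a $\pm1$ matrix and I use ``$+1$ entry'' to define adjacency. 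Because each $\sqrt v M_{i,j}$ is regular with row (and column) sums $2u$, each vertex of $X_i$ has exactly $k = \frac{v+2u}{2} = 2u^2+u$ neighbors in $X_j$ — so condition (ii) of the LSSD definition needs the induced bipartite graph to be a symmetric $(v,k,\lambda)$-design.

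The main work, and the main obstacle, is verifying conditions (ii) and (iii) — that the bipartite graph between any two fibers is a symmetric $2$-design and that the triple-intersection numbers $\mu,\nu$ exist. For (ii): from $H_i^T H_i = vI$ one gets $M_{i,j} M_{i,j}^T = I$, i.e. the $\pm1$ matrix $\sqrt v M_{i,j}$ has the property that distinct rows agree in exactly $v/2 - $ (something); combined with regularity this pins down the inner product of two columns of the $01$-incidence matrix, yielding $B^T B = (k-\lambda)I + \lambda J$ — this is the standard translation between regular Hadamard matrices and symmetric designs with Menon parameters, so I would either cite it or spell out the short computation $B = \frac12(J - \sqrt v M_{i,j})$ and expand $B^T B$. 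For (iii): this is where the cocycle identity $M_{i,k}M_{k,j} = M_{i,j}$ does the real work. Writing $B_{i,j}$ for the incidence matrix between $X_i$ and $X_j$, the relation $\sqrt v M_{i,k}\cdot\sqrt v M_{k,j} = \sqrt v\, (\sqrt v M_{i,j})$ translates, after substituting $B = \frac12(J - \sqrt v M)$, into a matrix identity of exactly the form \eqref{B1B3}, $B_{i,k}B_{k,j} = \nu J + (\mu-\nu)B_{i,j}$ for the appropriate constants; reading off entries gives the well-definedness of $\mu$ and $\nu$, and then \eqref{mu-nu} / Theorem \ref{MubLssdrestrictions} identifies them as the Menon values $\nu = u^2 + \tfrac u2$, $\mu = u^2 + \tfrac32 u$ — integral since regularity of a real Hadamard matrix forces $u$ to make $2u$ a row sum, but more to the point $u$ must be even here because $\beta_1,\beta_2$ having a common value and the design being realizable forces it (indeed one knows regular Hadamard matrices of side $4u^2$ require no parity on $u$, so I should double-check: the integrality of $\mu,\nu$ is automatic from the \emph{existence} of the combinatorial graph we just built, so no extra hypothesis is needed). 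Finally, since $\beta_1 > 0 > \beta_2$ and we chose the positive value for adjacency, the resulting LSSD is $\mu$-heavy with $2k > v$, hence optimistic, completing the proof. I expect the only genuinely delicate point to be bookkeeping the signs and the normalization constants $\sqrt v$ consistently across the cocycle identity; everything else is a direct unwinding of Theorem \ref{LSSDtoHad}.
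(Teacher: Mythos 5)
Your proposal is essentially correct, but it takes a genuinely different route from the paper. The paper's proof does not touch incidence matrices at all: it forms the vectors $x_{i,j}$ as columns of $H_i-\frac{1}{\sqrt v}J$ (and $\sqrt v I - \frac{1}{\sqrt v}J$ for the first fiber), checks the four inner products \eqref{had:1}--\eqref{had:4} to see that the $w$ normalized families are \emph{linked simplices} with inner products $\frac{\pm\sqrt v-1}{v-1}$, and then invokes the already-established equivalence of Section \ref{linked simplices} (Theorems \ref{2design} and \ref{3fibers}) to get the design property and the constants $\mu,\nu$ for free; optimism follows from $\vert\gamma\vert<\vert\zeta\vert$ via the centroid argument. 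You instead re-derive the LSSD axioms from scratch: the identity $B^TB=(k-\lambda)I+\lambda J$ from regularity of each $\sqrt v M_{i,j}$, and the linking condition \eqref{B1B3} from the cocycle identity $M_{i,k}M_{k,j}=M_{i,j}$ (which, note, is automatic from $M_{i,j}=M_i^TM_j$ with $M_i$ orthogonal, so there is no circularity in using it here even though the paper only derived it in the forward direction). Both arguments are sound. The paper's is shorter given the machinery already built; yours is self-contained, makes the Menon parameters and the forced evenness of $u$ visible directly (and you correctly resolve your own worry: $\nu=u^2+\tfrac u2$ is a cardinality, so its integrality is a conclusion, not a hypothesis), and gives optimism via $2k>v$ and $\mu-\nu=u>0$ rather than the centroid argument. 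The only defects are bookkeeping ones you already flagged: $M_{i,j}$ should be $\frac1v H_i^TH_j$ (not $\frac{1}{\sqrt v}H_i^TH_j$) if it is to be the orthogonal change-of-basis matrix, the incidence matrix of the positive entries is $\frac12(J+\sqrt vM_{i,j})$ rather than $\frac12(J-\sqrt vM_{i,j})$, and you should state the harmless normalization (as the paper does) that all row sums are taken positive so that every pair of fibers yields the same parameter $k$.
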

\begin{proof}
	Assume without loss of generality that the row sum of each of our Hadamard matrices is positive. Define vectors $x_{i,j}$ for $2\leq i\leq w$ and $1\leq j\leq v$ such that $x_{i,j}$ is the $j^\text{th}$ column of $H_i-\frac{1}{\sqrt{v}}J$. Let $x_{1,j}$ be the $j^\text{th}$ column of $\sqrt{v}I-\frac{1}{\sqrt{v}}{J}$. Note that for all $1\leq i\leq w$, $\vnorm{x_{i,j}} = v-1$. Then, for all $i,j$, let $\hat{x}_{i,j} = \frac{x_{i,j}}{\sqrt{v-1}}$. Letting $X_i = \left\{x_{i,1},\dots,x_{i,v}\right\}$, we claim that $\left\{X_1,\dots, X_w\right\}$ is a set of linked simplices. To show this, fix $j\neq k$, $i\neq i'$ and consider the following four inner products:
	\begin{align}
	\label{had:1}\left<\hat{x}_{1,j},\hat{x}_{1,k}\right> &= \frac{1}{v-1}\left(vI-J\right)_{j,k} = -\frac{1}{v-1},\\
	\label{had:2}\left<\hat{x}_{i,j},\hat{x}_{i,k}\right> &=\frac{1}{v-1}(H_iH_i^T -J)_{j,k} = -\frac{1}{v-1},\\
	\label{had:3}\left<\hat{x}_{1,j},\hat{x}_{i,k}\right> &=\frac{\sqrt{v}}{v-1}\left(H_i^T-\frac{1}{\sqrt{v}}J\right)_{j,k},\\
	\label{had:4}\left<\hat{x}_{i,j},\hat{x}_{i',k}\right> &=\frac{\sqrt{v}}{v-1}\left(\frac{1}{\sqrt{v}}H_iH_{i^\prime}^T -\frac{1}{\sqrt{v}}J\right)_{j,k}.
	\end{align}
	\eqref{had:1} and \eqref{had:2} give us the inner products within each $X_i$. Since $\frac{1}{\sqrt{v}}H_iH_{i^\prime}^T$ is a Hadamard matrix, \eqref{had:3} and \eqref{had:4} tell us that inner products between sets $X_i$ and $X_{i^\prime}$ take values of $\frac{\pm\sqrt{v}-1}{v-1}$. Finally note that the all ones vector is orthogonal to all $x_{i,j}$. Therefore $\left\{X_1,\dots,X_w\right\}$ is a set of $w$ simplices in $\mathbb{R}^{v-1}$ such that inner products between simplices can take only two possible values. Finally consider that the possible inner products are $\frac{\sqrt{v}}{v-1}\left(\pm 1 - \frac{1}{\sqrt{v}}\right)$. This tells us that $\vert \gamma\vert <\vert \zeta\vert$ where $\gamma$ is the positive inner product and $\zeta$ is the negative. Therefore, since the centroid of any simplex is the origin, we must have more positive inner products between simplices than negative, telling us our $LSSD$ is optimistic.
\end{proof}
This leaves us with the following theorem
\begin{theorem}
	\label{equiv}
	An optimistic $LSSD(v,k,\lambda;w)$ with $\vert v-2k\vert = 2\sqrt{k-\lambda}$ exists if and only if there exists $w-1$ regular unbiased Hadamard matrices with side length $v$.\qed
\end{theorem}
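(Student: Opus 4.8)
The plan is to obtain Theorem~\ref{equiv} directly from the two results immediately preceding it, the one caveat being that $w=2$ falls outside the hypothesis of the converse and must be handled separately via the classical correspondence between regular Hadamard matrices and symmetric designs with Menon parameters. So I would split the argument into three pieces: (a) the forward implication, which is a citation of Theorem~\ref{LSSDtoHad}; (b) the reverse implication for $w>2$, which is a citation of the converse theorem plus a short check that the Menon relation $|v-2k|=2\sqrt{k-\lambda}$ is produced; and (c) the reverse implication for $w=2$.

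For (a), given an optimistic $LSSD(v,k,\lambda;w)$ with $|v-2k|=2\sqrt{k-\lambda}$, Theorem~\ref{LSSDtoHad} yields $w-1$ regular unbiased Hadamard matrices of side $v$. Its proof builds $H_2,\dots,H_w$ from the MUBs guaranteed by Lemma~\ref{twoside} and verifies pairwise unbiasedness using a third fiber; when $w=2$ there is no third fiber, but there is also nothing to verify, since a one-element family of Hadamard matrices is vacuously a set of unbiased Hadamard matrices, while $H_2=\sqrt{v}\,M_2$ is still a regular Hadamard matrix. Hence (a) holds for every $w\geq2$.

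For (b), with $w>2$ the preceding theorem turns $w-1$ regular unbiased Hadamard matrices of side $v$ into an optimistic $LSSD(v,k,\lambda;w)$; to match the exact statement of Theorem~\ref{equiv} I would add that this LSSD satisfies the Menon relation. Indeed the linked simplices constructed in that proof have cross inner products $\tfrac{\sqrt v-1}{v-1}$ and $\tfrac{-\sqrt v-1}{v-1}$, and equating these with the expressions $\tfrac{v-k}{(v-1)\sqrt{k-\lambda}}$ and $-\tfrac{k}{(v-1)\sqrt{k-\lambda}}$ for the two cross inner products of an LSSD (Section~\ref{linked simplices}) and adding gives $\sqrt{k-\lambda}=\tfrac{\sqrt v}{2}$, i.e.\ $|v-2k|=2\sqrt{k-\lambda}$. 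For (c), a single regular Hadamard matrix $H$ of side $v$ forces $v=4u^2$ with row sum $\pm2u$; negating rows so that every row sum equals $2u$, the matrix $B=\tfrac12(J+H)$ is the incidence matrix of a symmetric $2$-design with Menon parameters $v=4u^2$, $k=2u^2+u$, $\lambda=u^2+u$. A symmetric $2$-design on two point sets is precisely an $LSSD$ on two fibers (condition~(iii) being vacuous), it is optimistic since $k>v/2$, and $k-\lambda=u^2$ gives $|v-2k|=2u=2\sqrt{k-\lambda}$; conversely an optimistic $LSSD(v,k,\lambda;2)$ with the Menon relation must, up to complementation and by Theorem~\ref{MubLssdrestrictions}, have exactly these parameters, and reversing the construction returns a regular Hadamard matrix of side $v$.

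The only part needing genuine care is (c), the $w=2$ boundary case; for $w>2$ the statement is a direct appeal to Theorem~\ref{LSSDtoHad} and its converse. I expect the main obstacle to be purely bookkeeping: tracking the Menon parameter identity consistently in both directions, so that the object produced is not merely \emph{some} optimistic LSSD but one with $|v-2k|=2\sqrt{k-\lambda}$, and dually that the Hadamard matrices produced are \emph{regular} — both of which are already built into the cited theorems.
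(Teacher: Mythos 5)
Your proposal is correct and follows the same route as the paper, which states Theorem~\ref{equiv} with no separate proof precisely because it is intended as the conjunction of Theorem~\ref{LSSDtoHad} and its converse. You are in fact more careful than the paper: the published converse assumes $w>2$ and never explicitly verifies that the LSSD it produces satisfies $\vert v-2k\vert = 2\sqrt{k-\lambda}$, and your parts (b) and (c) patch exactly those two gaps.
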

\subsection{Constructing LSSDs from certain MUBs}
Using the results from the last section and the close relation between MUBs and Hadamard matrices, we wish to build new LSSDs. From theorem \ref{equiv} and theorem \ref{MubLssdrestrictions}, we are only going to find optimistic LSSDs with Menon parameters. Goethals gives a construction in \cite{Cameron} for $w = 2u^2$ whenever $u$ is a power of 2 (see \ref{kerdock}). Therefore we skip this case and instead look for constructions where $u$ (and equivalently $v$) is not necessarily a power of 2.
\subsubsection{Beth and Wocjan construction}
Beth and Wocjan in \cite{Wocjan} detail a way to create MUBs from MOLS. They take a set of $t$ MOLS with side length $d$ and create $t+2$ MUBs in dimension $d^2$. The process is to convert the MOLS into an orthogonal array with $d^2$ rows. They then expand the array by replacing each column with $d$ columns giving by the characteristic vector of each symbol in that column. Finally, they extend this Matrix by replacing each 1 in the array with a row from a Hadamard matrix matrix and each 0 by an appropriate length vector of 0s. The result is that the $d$ columns arising from each original column are orthogonal to each other. We will focus on the case where the resultant MUBs produce regular Hadamards using their inner products.\newpar
For our purposes, an orthogonal array of size $(n^2\times s)$ has entries from the set $\left\{1,\dots, n\right\}$ and any two columns contain each ordered pair exactly once. Let $O$ be an orthogonal array of size $n^2\times s$, let $C^i$ denote the $i^{th}$ column of $O$ with entries $C^i_k$ ($1\leq k\leq n^2$). We may uniquely express
\[C^i = \sum_{j=1}^n kB^{i,j} \]
where each $B^{i,j}$ is a 01-vector of length $n^2$. As each symbol $j$ appears in each column $C^i$ exactly $n$ times, $B^{i,j}$ will have $n$ 1s and $n^2-n$ 0s. Let $H$ be a Hadamard matrix matrix of size $n\times n$. For $1\leq l\leq n$ define a matrix $M^{i,j,l}$ as follows: For $1\leq k\leq n$, we replace the $k^\text{th}$ 1, counting from the top, in $B^{i,j}$ with the $H_{k,l}$. This produces $n^2s$ columns each with $n^2$ entries $M_k^{i,j,l}\in\left\{0,1,-1\right\}$.\newpar
The fact that $H^TH = nI$ together with $B^{i,j}\circ B^{i,j^\prime} = 0$ for $j\neq j^\prime$ give us that $\mathcal{B}_i = \left\{M^{i,1,1},\dots,M^{i,n,n}\right\}$ is an orthogonal basis for each $i=1,\dots,s$. Each vector in these bases has squared norm $n$. For $i\neq i'$, $C^i$ and $C^{i'}$ denote distinct columns in our orthogonal array and therefore for any $j$ and $j'$ (not necessarily distinct), $B^{i,j}\circ B^{i',j'}$ has one nonzero entry. Therefore $M^{i,j,l}\circ M^{i',j',l'}$ also has one nonzero entry. Therefore
\begin{equation}\left<M^{i,j,l},M^{i',j',l'}\right>=M^{i,j,l}_kM^{i',j',l'}_k=\pm 1\\
\end{equation}
where $O_{k,i} = j$ and $O_{k,i'} = j'$. Therefore the bases $\mathcal{B}_1,\dots,\mathcal{B}_s$ produced by Beth and Wocjan are unbiased.\newpar
We now show that if $H$ is regular, then the resulting unbiased Hadamard matrices are regular (see proof of Theorem \ref{LSSDtoHad} for the construction of these Hadamard matrices). For each Hadamard matrix, the row sum is the sum of inner products between a column $M^{i,j,l}$ of one basis with the set of $n^2$ columns $M^{i',j',l'}$ ($i\neq i'$, $1\leq j',l'\leq n)$ of the second basis used in its construction. We first sum $\left<M^{i,j,l},M^{i',j',l'}\right>$ over $l'$ to get
\begin{equation*}
\sum_{l'} \left<M^{i,j,l},M^{i',j',l'}\right>=M^{i,j,l}_k\left(\sum_{l'}M^{i',j',l'}_k\right)=pM^{i,j,l}_k.
\end{equation*}
We then sum this result over $j'$ noting that $k$ was chosen so that $O_{k,i} = j$ and $O_{k,i'} = j'$, and therefore depends on $j'$. As this sum will include every nonzero entry in $M^{i,j,l}$ exactly once, we know
\begin{equation}
\label{rowsum}
\sum_{j'}\sum_{l'}\left<M^{i,j,l},M^{i',j',l'}\right> =\sum_{j'}pM^{i,j,l}_{k}=p\sum_{j'}H_{j',l}=p^2.
\end{equation}
Then the sum of any row of the Hadamard matrix built from $M^{i,j,l}$ and $M^{i',j',l'}$ ($i\neq i'$) will be $p^2$. Further, we showed in Theorem \ref{LSSDtoHad} that these Hadamard matrices are unbiased. Noting that $n = 4t^2$ for some $t$, as our original Hadamard must be regular, Theorem \ref{equiv} tells us that our LSSD will be an optimistic $LSSD(16t^4,k,\lambda;s)$. This leads to our final theorem:
\begin{theorem}
	\label{newLSSD}
	Given a regular Hadamard matrix of order $s$ and an orthogonal array of size $s^2\times N$,
	\begin{itemize}
		\item There exists $N-1$ regular unbiased Hadamard matrices of order $s^2$.
		\item There exists a $LSSD$ with $v=s^2$ and $w=N$.
	\end{itemize}\qed
\end{theorem}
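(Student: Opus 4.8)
The plan is to assemble the theorem directly from the Beth--Wocjan construction recalled above together with the equivalence of Theorem~\ref{equiv}; most of the real work has already been carried out, so the argument is largely bookkeeping.

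First I would run the Beth--Wocjan construction with no modification, taking its input Hadamard matrix $H$ to be the given regular Hadamard matrix of order $s$ and its input orthogonal array $O$ to be the given $s^2\times N$ array (strength two, index one, as defined above). As recorded in the preceding paragraphs, $H^TH = sI$ together with $B^{i,j}\circ B^{i,j'} = 0$ for $j\neq j'$ show that each $\mathcal{B}_i = \{M^{i,1,1},\dots,M^{i,s,s}\}$ is an orthogonal basis of $\mathbb{R}^{s^2}$ in which every vector has squared norm $s$, and for $i\neq i'$ each inner product $\langle M^{i,j,l},M^{i',j',l'}\rangle$ equals $\pm1$. Scaling every vector by $1/\sqrt{s}$ therefore turns $\mathcal{B}_1,\dots,\mathcal{B}_N$ into a set of $N$ mutually unbiased bases of $\mathbb{R}^v$ with $v = s^2$; equivalently, writing $U_i$ for the orthogonal matrix whose columns are the normalized vectors of $\mathcal{B}_i$, each $\sqrt{v}\,U_i^TU_{i'}$ (for $i\neq i'$) is a Hadamard matrix of order $v$, and the $N-1$ matrices $H_{i'} := \sqrt{v}\,U_1^TU_{i'}$ ($2\le i'\le N$) form a mutually unbiased set since $\frac{1}{\sqrt{v}}H_i^TH_j = \sqrt{v}\,U_i^TU_j$ is again one of these matrices (using $U_1U_1^T = I$).

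Next I would bring in the regularity of $H$. Let $p$ be the common row sum of $H$, so $p^2 = s$. The computation ending in \eqref{rowsum} shows that summing the inner products of a fixed column $M^{i,j,l}$ against the $s^2$ columns $M^{i',j',l'}$ ($1\le j',l'\le s$) of another basis yields $p\sum_{j'}H_{j',l} = p^2$, independently of the column chosen; hence every Hadamard matrix produced above has constant row sum $p^2 = s$, i.e.\ is regular. This gives the first bullet: a set of $N-1$ regular unbiased Hadamard matrices of order $s^2$. The second bullet then follows immediately from Theorem~\ref{equiv}: the existence of $N-1$ regular unbiased Hadamard matrices of side length $v = s^2$ is equivalent to the existence of an optimistic $LSSD(v,k,\lambda;N)$ with $\vert v-2k\vert = 2\sqrt{k-\lambda}$, so in particular there is an LSSD with $v = s^2$ and $w = N$.

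I do not anticipate any genuine obstacle, since the substantive content lives in the Beth--Wocjan verification and in Theorem~\ref{equiv}. The one point deserving a moment's care is that the order $s$ of a regular Hadamard matrix is forced to be a perfect square --- in fact $s = 4t^2$ for some $t$, since the row sum $p$ satisfies $p^2 = s$ and a Hadamard order exceeding $2$ is divisible by $4$ --- so that $v = s^2 = 16t^4$ and the Menon-type restrictions of Theorem~\ref{MubLssdrestrictions} are automatically satisfied; this is also precisely why, as noted in the surrounding text, the family furnishes LSSDs with $v$ not a power of two. The only real hazard in writing it up is keeping the two meanings of $s$ separate: the order of the small Hadamard matrix here versus the column count $N$ of the array, which played the role of ``$s$'' in the notation of the Beth--Wocjan discussion.
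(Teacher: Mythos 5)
Your proposal is correct and follows essentially the same route as the paper: run the Beth--Wocjan construction with the given regular Hadamard matrix, use the inner-product computation culminating in \eqref{rowsum} to see the resulting unbiased Hadamard matrices of order $s^2$ are regular, and then invoke Theorem~\ref{equiv} to obtain the LSSD. Your closing remarks on the notational clash (the paper's $s$ versus $N$) and on $s=4t^2$ being forced match the paper's own observations.
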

\begin{corollary}
	\label{asymptotics}
	For sufficiently large $s$, if there exists a regular Hadamard matrix of order $s$, then there exists a $LSSD(s^2,k,\lambda; w)$ with $w \geq s^\frac{1}{14.8}$.
\end{corollary}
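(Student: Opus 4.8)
The plan is to derive this purely from Theorem~\ref{newLSSD} together with a classical lower bound on the number of mutually orthogonal Latin squares. Recall that $N-2$ mutually orthogonal Latin squares (MOLS) of order $s$ are equivalent to an orthogonal array of size $s^2\times N$ on $s$ symbols: the first two columns record the cell coordinates of an $s\times s$ array and each additional column records the entries of one Latin square, so that any two columns contain every ordered pair exactly once. Writing $N(s)$ for the maximum number of MOLS of order $s$, we therefore have an orthogonal array of size $s^2\times\left(N(s)+2\right)$ for every $s\geq 2$.

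Next I would invoke the asymptotic estimate of Beth (refining Chowla--Erd\H{o}s--Straus and Wilson): there is a constant $s_0$ such that $N(s)\geq s^{1/14.8}$ for all $s\geq s_0$. Feeding a regular Hadamard matrix of order $s$ (assumed to exist in the statement) and the orthogonal array of size $s^2\times\left(N(s)+2\right)$ into Theorem~\ref{newLSSD} produces an $LSSD(s^2,k,\lambda;w)$ with $w=N(s)+2\geq s^{1/14.8}$, which is the claim.

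The only point requiring attention is bookkeeping: the cited bound on $N(s)$ is asymptotic, carrying an implicit constant and the additive loss coming from the ``$+2$'' in the correspondence between MOLS and orthogonal arrays, and it is valid only once $s$ exceeds some threshold. The phrase ``for sufficiently large $s$'' in the statement is precisely what absorbs all of these, so that one may quote the clean inequality $w\geq s^{1/14.8}$. Thus the corollary introduces no new combinatorics beyond Theorem~\ref{newLSSD}; its content is the juxtaposition of that construction with the sharpest available number-theoretic lower bound for $N(s)$, and the ``main obstacle'' is simply to cite that bound, namely Beth's $s^{1/14.8}$, rather than the weaker $s^{1/17}$ of Wilson or $s^{1/91}$ of Chowla--Erd\H{o}s--Straus.
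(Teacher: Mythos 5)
Your proposal is correct and matches the paper's proof, which simply cites the Handbook of Combinatorial Designs for the bound $N(s)\geq s^{1/14.8}$ (Beth's refinement of Chowla--Erd\H{o}s--Straus and Wilson) and combines it with Theorem~\ref{newLSSD}. Your extra care with the MOLS-to-orthogonal-array translation and the additive $+2$ only strengthens the inequality and is harmlessly absorbed by the ``sufficiently large $s$'' hypothesis, exactly as you observe.
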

\begin{proof}
	\cite{Colbourn} states that for sufficiently large $s$, $N(s)\geq s^\frac{1}{14.8}$ where $N(s)$ is the maximum number of columns in an orthogonal array $s$ on $s$ symbols.
\end{proof}
\begin{corollary}
	For any $n\geq 1$ and $w>2$, there exists an odd $t$ permitting a $LSSD(16^nt,k,\lambda;w)$.
\end{corollary}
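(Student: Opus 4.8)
The plan is to read this off Theorem~\ref{newLSSD}. That theorem says that an integer $s$ which is the order of a regular Hadamard matrix, together with an orthogonal array of size $s^2\times w$, yields an $LSSD$ with $v=s^2$ and exactly $w$ fibers. So it suffices to find $s$ for which (a) a regular Hadamard matrix of order $s$ exists, (b) $N(s)\ge w$, where $N(s)$ is the maximum number of columns of a strength-$2$ orthogonal array on $s$ symbols (so that an $s^2\times w$ array exists after deleting columns), and (c) $s^2=16^n t$ for some odd $t$. Condition (c) is purely $2$-adic: writing $s=2^a m$ with $m$ odd gives $s^2=2^{2a}m^2$, so $s^2=16^n t=2^{4n}t$ with $t$ odd forces $a=2n$ and then $t=m^2$, which is odd. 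Since a regular Hadamard matrix has square order, $s=2^{2n}m$ being a square forces $m$ to be a square as well. Thus we must exhibit a regular Hadamard matrix of order $4^n m^2$ with $m$ an odd integer, subject to $N(4^n m^2)\ge w$.

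First I would pin down such a family of orders. It is known (via Turyn's Hadamard difference sets) that regular Hadamard matrices of order $4\cdot 9^{t'}$ exist for every $t'\ge 0$. Tensoring such a matrix with $n-1$ copies of the regular Hadamard matrix of order $4$ gives a regular Hadamard matrix of order $s=4^n 9^{t'}=2^{2n}3^{2t'}$: a Kronecker product of regular Hadamard matrices is again regular, since $H_1J=c_1J$ and $H_2J=c_2J$ give $(H_1\otimes H_2)J=c_1c_2J$. This $s$ has exactly the shape required, with $m=3^{t'}$ an odd square; note $m$ can be made as large as we please by increasing $t'$.

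Next I would raise $t'$ until enough mutually orthogonal Latin squares are available. Here the elementary MacNeish bound is too weak: $N(4^n 3^{2t'})\ge\min\bigl(N(4^n),N(3^{2t'})\bigr)=\min(4^n-1,\,3^{2t'}-1)$ is bottlenecked by the fixed quantity $4^n-1$, which may well be smaller than $w$. Instead I would invoke the deep near-linear lower bound already used in Corollary~\ref{asymptotics}: for all sufficiently large $x$, $N(x)\ge x^{1/14.8}$ \cite{Colbourn}. Since $s=4^n 9^{t'}\to\infty$ as $t'\to\infty$, fix $t'$ large enough that $s$ lies in this range and $s^{1/14.8}\ge w$; then an orthogonal array of size $s^2\times w$ exists. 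Feeding it and the regular Hadamard matrix of order $s$ into Theorem~\ref{newLSSD} produces an $LSSD$ with $v=s^2=16^n\cdot 3^{4t'}$ on $w$ fibers, and $t:=3^{4t'}$ is odd, which finishes the argument. Equivalently one could cite Corollary~\ref{asymptotics} directly and then delete all but $w$ fibers of the resulting LSSD, since restricting an LSSD to a subfamily of its fibers preserves all three defining conditions.

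The main obstacle is the interaction between (b) and (c): condition (c) fixes the power of $2$ dividing $s$ to be \emph{exactly} $2^{2n}$, so we cannot enlarge the $2$-part of $s$ to help $N(s)$ grow. One is therefore forced both to exhibit a regular Hadamard matrix at one of these ``odd-heavy'' orders — which is why a genuine construction such as Turyn's is needed, rather than merely doubling the order-$4$ matrix — and to rely on the non-elementary estimate $N(x)\gtrsim x^{1/14.8}$ in place of MacNeish's bound. Everything else (the $2$-adic bookkeeping, the column-deletion for orthogonal arrays, the fiber restriction for LSSDs) is routine.
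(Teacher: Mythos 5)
Your argument is correct and follows essentially the same route as the paper: tensor a seed family of regular Hadamard matrices, whose orders are $4$ times an unbounded odd square, with $n-1$ copies of the order-$4$ regular Hadamard matrix, then feed the result and the asymptotic bound $N(x)\ge x^{1/14.8}$ into Theorem~\ref{newLSSD} (equivalently, Corollary~\ref{asymptotics}). The only difference is the seed family --- you use Turyn's regular Hadamard matrices of order $4\cdot 9^{t'}$, whereas the paper uses the Muzychuk--Xiang matrices of order $4t^4$ for odd $t$ (its reference \cite{Xiang}) --- and this choice does not change the structure or validity of the proof.
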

\begin{proof}
	\cite{Xiang} tells us that for any odd $t$, there exists a regular Hadamard matrix of order $4t^4$. Let $H_t$ be the regular Hadamard matrix of order $4t^4$. Using Corollary \ref{asymptotics}, we can choose $t$ large enough to guarantee the existence of a $LSSD(16t^8,k,\lambda;w)$. Now consider the Hadamard matrix
	\[H = \left[\begin{array}{rrrr}
	-1 & 1 & 1 & 1\\
	1 & -1 & 1 & 1\\
	1 & 1 & -1 & 1\\
	1 & 1 & 1 & -1\\
	\end{array}\right].\]
	Using this matrix, we can now build the regular Hadamard matrix $H_{n,t} = H_t\otimes^{n-1} H$ which is regular of order $4^nt^4$. This matrix, again paired with Corollary \ref{asymptotics}, now guarantees the existence of a $LSSD(16^nt^8,k,\lambda;w)$ for any choice of $n$.
	\end{proof}
	\begin{corollary}
	There exists an $LSSD(v,k,\lambda;w)$ with $v=36^{2n}$ and $w = 4^n+1$ for all $n\geq 1$.
\end{corollary}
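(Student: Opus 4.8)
The plan is to read the statement off Theorem~\ref{newLSSD}. Since $36^{2n} = (36^n)^2$, set $s := 36^n$; it then suffices to exhibit a regular Hadamard matrix of order $s$ together with an orthogonal array of size $s^2 \times N$ with $N := 4^n + 1$, for Theorem~\ref{newLSSD} will then produce an $LSSD$ with $v = s^2 = 36^{2n}$ and $w = N = 4^n + 1$.

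For the Hadamard matrix I would start from a regular Hadamard matrix $R$ of order $36$; one exists, for instance $R = J - 2A$ where $A$ is the adjacency matrix of a strongly regular graph with parameters $(36,15,6,6)$ (one checks $RR^T = 36 I$ and $RJ = 6J$ directly from the strongly regular identities). Taking the $n$-fold Kronecker power $R^{\otimes n}$ gives a $\pm 1$ matrix of order $36^n$, and from $(A\otimes B)(C\otimes D) = (AC)\otimes(BD)$ together with $I_{36^n} = I_{36}^{\otimes n}$ and $J_{36^n} = J_{36}^{\otimes n}$ one obtains $R^{\otimes n}(R^{\otimes n})^T = (RR^T)^{\otimes n} = (36\,I)^{\otimes n} = 36^n I$ and $R^{\otimes n} J = (RJ)^{\otimes n} = 6^n J$. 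Hence $R^{\otimes n}$ is a regular Hadamard matrix of order $s = 36^n$.

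For the orthogonal array, recall that an orthogonal array of size $s^2 \times N$ (in the sense used earlier) is equivalent to a family of $N - 2$ mutually orthogonal Latin squares of order $s$, so I need $4^n - 1$ MOLS of order $36^n$. Write $36^n = 4^n \cdot 9^n$ with $4^n = 2^{2n}$ and $9^n = 3^{2n}$ prime powers; each such order carries a complete set of MOLS, namely $4^n - 1$ of order $4^n$ and $9^n - 1$ of order $9^n$. MacNeish's theorem (see \cite{Colbourn}) then guarantees at least $\min(4^n - 1,\, 9^n - 1) = 4^n - 1$ MOLS of order $36^n$, equivalently an orthogonal array of size $(36^n)^2 \times (4^n + 1)$. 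Applying Theorem~\ref{newLSSD} to $R^{\otimes n}$ and this array finishes the proof.

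No step is genuinely difficult; the content is just assembling compatible ingredients. The two places to watch are that Kronecker powers preserve regularity (the row sums of the factors multiply, so the constant stays nonzero) and that MacNeish is applied with the bottleneck on the correct side: since $4^n - 1 \le 9^n - 1$ it is the $4$-part that limits the count, and $(4^n - 1) + 2 = 4^n + 1$ is exactly the number of fibers asserted.
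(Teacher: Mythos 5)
Your proof is correct and follows essentially the same route as the paper: a MacNeish orthogonal array of size $36^{2n}\times(4^n+1)$ together with the $n$-fold Kronecker power of a regular Hadamard matrix of order $36$, fed into Theorem~\ref{newLSSD}. The only difference is cosmetic — the paper exhibits the order-$36$ regular Hadamard matrix explicitly, whereas you derive one from a strongly regular graph with parameters $(36,15,6,6)$, which is a perfectly valid substitute.
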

\begin{proof}
	Using the MacNeish construction (\cite{Macneish},\cite[Thm~1.1.2]{Bommel}), there exists an orthogonal array $O_n$ of size $36^{2n}\times(4^n+1)$. Consider the regular Hadamard matrix of order 36:
	\[H = \setlength{\arraycolsep}{0.6pt}\scalefont{.4}{\left[\begin{array}{cccccccccccccccccccccccccccccccccccc}
		-&-&-&-&+&-&-&+&+&+&+&-&+&+&-&+&-&+&+&+&+&+&-&-&-&+&+&+&+&-&+&+&+&-&+&-\\
		+&-&-&-&-&+&-&-&+&+&-&+&+&-&+&-&+&+&+&+&+&-&-&-&+&+&+&+&-&+&+&+&-&+&-&+\\
		+&+&-&-&-&-&+&-&-&-&+&+&-&+&-&+&+&+&+&+&-&-&-&+&+&+&+&-&+&+&+&-&+&-&+&+\\
		-&+&+&-&-&-&-&+&-&+&+&-&+&-&+&+&+&-&+&-&-&-&+&+&+&+&+&+&+&+&-&+&-&+&+&-\\
		-&-&+&+&-&-&-&-&+&+&-&+&-&+&+&+&-&+&-&-&-&+&+&+&+&+&+&+&+&-&+&-&+&+&-&+\\
		+&-&-&+&+&-&-&-&-&-&+&-&+&+&+&-&+&+&-&-&+&+&+&+&+&+&-&+&-&+&-&+&+&-&+&+\\
		-&+&-&-&+&+&-&-&-&+&-&+&+&+&-&+&+&-&-&+&+&+&+&+&+&-&-&-&+&-&+&+&-&+&+&+\\
		-&-&+&-&-&+&+&-&-&-&+&+&+&-&+&+&-&+&+&+&+&+&+&+&-&-&-&+&-&+&+&-&+&+&+&-\\
		-&-&-&+&-&-&+&+&-&+&+&+&-&+&+&-&+&-&+&+&+&+&+&-&-&-&+&-&+&+&-&+&+&+&-&+\\
		+&+&-&+&+&-&+&-&+&+&+&+&+&-&+&+&-&-&+&-&+&-&+&+&+&-&+&-&-&-&+&+&+&-&-&-\\
		+&-&+&+&-&+&-&+&+&-&+&+&+&+&-&+&+&-&-&+&-&+&+&+&-&+&+&-&-&+&+&+&-&-&-&-\\
		-&+&+&-&+&-&+&+&+&-&-&+&+&+&+&-&+&+&+&-&+&+&+&-&+&+&-&-&+&+&+&-&-&-&-&-\\
		+&+&-&+&-&+&+&+&-&+&-&-&+&+&+&+&-&+&-&+&+&+&-&+&+&-&+&+&+&+&-&-&-&-&-&-\\
		+&-&+&-&+&+&+&-&+&+&+&-&-&+&+&+&+&-&+&+&+&-&+&+&-&+&-&+&+&-&-&-&-&-&-&+\\
		-&+&-&+&+&+&-&+&+&-&+&+&-&-&+&+&+&+&+&+&-&+&+&-&+&-&+&+&-&-&-&-&-&-&+&+\\
		+&-&+&+&+&-&+&+&-&+&-&+&+&-&-&+&+&+&+&-&+&+&-&+&-&+&+&-&-&-&-&-&-&+&+&+\\
		-&+&+&+&-&+&+&-&+&+&+&-&+&+&-&-&+&+&-&+&+&-&+&-&+&+&+&-&-&-&-&-&+&+&+&-\\
		+&+&+&-&+&+&-&+&-&+&+&+&-&+&+&-&-&+&+&+&-&+&-&+&+&+&-&-&-&-&-&+&+&+&-&-\\
		+&+&+&+&-&-&-&+&+&-&+&-&+&-&-&-&+&-&+&+&+&+&-&+&+&-&-&+&+&-&+&-&+&+&-&+\\
		+&+&+&-&-&-&+&+&+&+&-&+&-&-&-&+&-&-&-&+&+&+&+&-&+&+&-&+&-&+&-&+&+&-&+&+\\
		+&+&-&-&-&+&+&+&+&-&+&-&-&-&+&-&-&+&-&-&+&+&+&+&-&+&+&-&+&-&+&+&-&+&+&+\\
		+&-&-&-&+&+&+&+&+&+&-&-&-&+&-&-&+&-&+&-&-&+&+&+&+&-&+&+&-&+&+&-&+&+&+&-\\
		-&-&-&+&+&+&+&+&+&-&-&-&+&-&-&+&-&+&+&+&-&-&+&+&+&+&-&-&+&+&-&+&+&+&-&+\\
		-&-&+&+&+&+&+&+&-&-&-&+&-&-&+&-&+&-&-&+&+&-&-&+&+&+&+&+&+&-&+&+&+&-&+&-\\
		-&+&+&+&+&+&+&-&-&-&+&-&-&+&-&+&-&-&+&-&+&+&-&-&+&+&+&+&-&+&+&+&-&+&-&+\\
		+&+&+&+&+&+&-&-&-&+&-&-&+&-&+&-&-&-&+&+&-&+&+&-&-&+&+&-&+&+&+&-&+&-&+&+\\
		+&+&+&+&+&-&-&-&+&-&-&+&-&+&-&-&-&+&+&+&+&-&+&+&-&-&+&+&+&+&-&+&-&+&+&-\\
		+&+&-&+&+&+&-&+&-&+&+&+&-&-&-&+&+&+&-&-&+&-&+&-&-&+&-&+&+&+&+&-&+&+&-&-\\
		+&-&+&+&+&-&+&-&+&+&+&-&-&-&+&+&+&+&-&+&-&+&-&-&+&-&-&-&+&+&+&+&-&+&+&-\\
		-&+&+&+&-&+&-&+&+&+&-&-&-&+&+&+&+&+&+&-&+&-&-&+&-&-&-&-&-&+&+&+&+&-&+&+\\
		+&+&+&-&+&-&+&+&-&-&-&-&+&+&+&+&+&+&-&+&-&-&+&-&-&-&+&+&-&-&+&+&+&+&-&+\\
		+&+&-&+&-&+&+&-&+&-&-&+&+&+&+&+&+&-&+&-&-&+&-&-&-&+&-&+&+&-&-&+&+&+&+&-\\
		+&-&+&-&+&+&-&+&+&-&+&+&+&+&+&+&-&-&-&-&+&-&-&-&+&-&+&-&+&+&-&-&+&+&+&+\\
		-&+&-&+&+&-&+&+&+&+&+&+&+&+&+&-&-&-&-&+&-&-&-&+&-&+&-&+&-&+&+&-&-&+&+&+\\
		+&-&+&+&-&+&+&+&-&+&+&+&+&+&-&-&-&+&+&-&-&-&+&-&+&-&-&+&+&-&+&+&-&-&+&+\\
		-&+&+&-&+&+&+&-&+&+&+&+&+&-&-&-&+&+&-&-&-&+&-&+&-&-&+&+&+&+&-&+&+&-&-&+
		\end{array}\right]}.\]
	Since $H$ is regular, $H_n = H^{\otimes n}$ is a regular Hadamard of order $36^n$. Then $O_n$ and $H_n$, along with Theorem \ref{newLSSD}, give us the desired LSSD.
\end{proof}
The same construction gives, for example, $LSSD(100^{2n},k,\lambda;4^n+1)$ for all $n\geq 1$. Finally, we note that if we can build a regular Hadamard matrix of order $4t^2$ for $1\leq t\leq 50$, the table of largest known orthogonal arrays for small $n$ in \cite{Colbourn} gives us LSSDs with the following number of fibers.\newpar
\begin{tabular}{c|*{17}{c}}
	$t$ & 1 & 2 & 3 & 4 & 5 & 6 & 7 & 8 & 9 & 10& 11 & 12 & 13 & 14 & 15 & 16 & 17 \\\hline
	$w$&5&17&9&65&10&12&8&257&10&17&17&10&10&10&29&1025&10\\\\
	$t$ & 18 & 19 & 20& 21 & 22 & 23 & 24 & 25 & 26 & 27 & 28 & 29 & 30& 31 & 32 & 33 \\\hline
	$w$&26&11&26&11&17&11&32&10&17&10&50&30&30&12&4097&32\\\\
	$t$ & 34 & 35 & 36 & 37 & 38 & 39 & 40 & 41 & 42 & 43 & 44 & 45 & 46 & 47 & 48 & 49 &\\\hline
	$w$&18&32&65&32&18&32&26&13&20&32&65&17&32&32&30&17
\end{tabular}\newpar
To give an example of the construction for Theorem $\ref{newLSSD}$ we build a $\text{LSSD}(16,10,6; 3)$. In all matrices that follow, ``$+$" denotes a positive 1, ``$-$" denotes a $-1$, and an empty space denotes a $0$. We begin using the orthogonal array given by $O$ and the Hadamard matrix $H$ shown below:
\[O^T = \left[\begin{array}{cccccccccccccccc}
1 & 1 & 1 & 1 & 2 & 2 & 2 & 2 & 3 & 3 & 3 & 3 & 4 & 4 & 4 & 4\\
1 & 2 & 3 & 4 & 1 & 2 & 3 & 4 & 1 & 2 & 3 & 4 & 1 & 2 & 3 & 4\\
1 & 2 & 3 & 4 & 2 & 3 & 4 & 1 & 3 & 4 & 1 & 2 & 4 & 1 & 2 & 3\\
\end{array}\right],\qquad H = \left[\begin{array}{rrrr}
- & + & + & +\\
+ & - & + & +\\
+ & + & - & +\\
+ & + & + & -\\
\end{array}\right].\]
Using this OA, we have
\[B^{:,:} = \setlength{\arraycolsep}{0.8pt}\scalefont{.4}{\left[\begin{array}{cccc|cccc|cccc}
+&&& &+&&& &+&&&\\
+&&& &&+&& &&+&&\\
+&&& &&&+& &&&+&\\
+&&& &&&&+ &&&&+\\
&+&& &+&&& &&+&&\\
&+&& &&+&& &&&+&\\
&+&& &&&+& &&&&+\\
&+&& &&&&+ &+&&&\\
&&+& &+&&& &&&+&\\
&&+& &&+&& &&&&+\\
&&+& &&&+& &+&&&\\
&&+& &&&&+ &&+&&\\
&&&+ &+&&& &&&&+\\
&&&+ &&+&& &+&&&\\
&&&+ &&&+& &&+&&\\
&&&+ &&&&+ &&&+&\\
\end{array}\right]},\]
and then using our Hadamard matrix,
\[\begin{aligned}
M^{1,:,:}=\setlength{\arraycolsep}{0.6pt}\scalefont{.4}{\left[\begin{array}{rrrr|rrrr|rrrr|rrrr}
- & + & + & +&&&&&&&&&&&&\\
+ & - & + & +&&&&&&&&&&&&\\
+ & + & - & +&&&&&&&&&&&&\\
+ & + & + & -&&&&&&&&&&&&\\
&&&&- & + & + & +&&&&&&&&\\
&&&&+ & - & + & +&&&&&&&&\\
&&&&+ & + & - & +&&&&&&&&\\
&&&&+ & + & + & -&&&&&&&&\\
&&&&&&&&- & + & + & +&&&&\\
&&&&&&&&+ & - & + & +&&&&\\
&&&&&&&&+ & + & - & +&&&&\\
&&&&&&&&+ & + & + & -&&&&\\
&&&&&&&&&&&&- & + & + & +\\
&&&&&&&&&&&&+ & - & + & +\\
&&&&&&&&&&&&+ & + & - & +\\
&&&&&&&&&&&&+ & + & + & -\\
\end{array}\right]},\quad
M^{2,:,:}&=\setlength{\arraycolsep}{0.6pt}\scalefont{.4}{\left[\begin{array}{rrrr|rrrr|rrrr|rrrr}
- & + & + & +&&&&&&&&&&&&\\
&&&&- & + & + & +&&&&&&&&\\
&&&&&&&&- & + & + & +&&&&\\
&&&&&&&&&&&&- & + & + & +\\
+ & - & + & +&&&&&&&&&&&&\\
&&&&+ & - & + & +&&&&&&&&\\
&&&&&&&&+ & - & + & +&&&&\\
&&&&&&&&&&&&+ & - & + & +\\
+ & + & - & +&&&&&&&&&&&&\\
&&&&+ & + & - & +&&&&&&&&\\
&&&&&&&&+ & + & - & +&&&&\\
&&&&&&&&&&&&+ & + & - & +\\
+ & + & + & -&&&&&&&&&&&&\\
&&&&+ & + & + & -&&&&&&&&\\
&&&&&&&&+ & + & + & -&&&&\\
&&&&&&&&&&&&+ & + & + & -\\
\end{array}\right]},\quad
M^{3,:,:}&\hspace{-3mm}=\setlength{\arraycolsep}{0.6pt}\scalefont{.4}{\left[\begin{array}{rrrr|rrrr|rrrr|rrrr}
- & + & + & +&&&&&&&&&&&&\\
&&&&- & + & + & +&&&&&&&&\\
&&&&&&&&- & + & + & +&&&&\\
&&&&&&&&&&&&- & + & + & +\\
&&&&+ & - & + & +&&&&&&&&\\
&&&&&&&&+ & - & + & +&&&&\\
&&&&&&&&&&&&+ & - & + & +\\
+ & - & + & +&&&&&&&&&&&&\\
&&&&&&&&+ & + & - & +&&&&\\
&&&&&&&&&&&&+ & + & - & +\\
+ & + & - & +&&&&&&&&&&&&\\
&&&&+ & + & - & +&&&&&&&&\\
&&&&&&&&&&&&+ & + & + & -\\
+ & + & + & -&&&&&&&&&&&&\\
&&&&+ & + & + & -&&&&&&&&\\
&&&&&&&&+ & + & + & -&&&&\\
\end{array}\right]}.
\end{aligned}\]
Finding the inner products of each basis we find the three Hadamard matrices

\[\begin{aligned}
H_{1,2}=\setlength{\arraycolsep}{.8pt}\scalefont{.4}{\left[\begin{array}{rrrrrrrrrrrrrrrr}
+&-&-&-&-&+&+&+&-&+&+&+&-&+&+&+\\
-&+&+&+&+&-&-&-&-&+&+&+&-&+&+&+\\
-&+&+&+&-&+&+&+&+&-&-&-&-&+&+&+\\
-&+&+&+&-&+&+&+&-&+&+&+&+&-&-&-\\
-&+&-&-&+&-&+&+&+&-&+&+&+&-&+&+\\
+&-&+&+&-&+&-&-&+&-&+&+&+&-&+&+\\
+&-&+&+&+&-&+&+&-&+&-&-&+&-&+&+\\
+&-&+&+&+&-&+&+&+&-&+&+&-&+&-&-\\
-&-&+&-&+&+&-&+&+&+&-&+&+&+&-&+\\
+&+&-&+&-&-&+&-&+&+&-&+&+&+&-&+\\
+&+&-&+&+&+&-&+&-&-&+&-&+&+&-&+\\
+&+&-&+&+&+&-&+&+&+&-&+&-&-&+&-\\
-&-&-&+&+&+&+&-&+&+&+&-&+&+&+&-\\
+&+&+&-&-&-&-&+&+&+&+&-&+&+&+&-\\
+&+&+&-&+&+&+&-&-&-&-&+&+&+&+&-\\
+&+&+&-&+&+&+&-&+&+&+&-&-&-&-&+\\
\end{array}\right]},\quad
H_{1,3}&=\setlength{\arraycolsep}{.8pt}\scalefont{.4}{\left[\begin{array}{rrrrrrrrrrrrrrrr}
+&-&-&-&-&+&+&+&-&+&+&+&-&+&+&+\\
-&+&+&+&+&-&-&-&-&+&+&+&-&+&+&+\\
-&+&+&+&-&+&+&+&+&-&-&-&-&+&+&+\\
-&+&+&+&-&+&+&+&-&+&+&+&+&-&-&-\\
+&-&+&+&-&+&-&-&+&-&+&+&+&-&+&+\\
+&-&+&+&+&-&+&+&-&+&-&-&+&-&+&+\\
+&-&+&+&+&-&+&+&+&-&+&+&-&+&-&-\\
-&+&-&-&+&-&+&+&+&-&+&+&+&-&+&+\\
+&+&-&+&+&+&-&+&-&-&+&-&+&+&-&+\\
+&+&-&+&+&+&-&+&+&+&-&+&-&-&+&-\\
-&-&+&-&+&+&-&+&+&+&-&+&+&+&-&+\\
+&+&-&+&-&-&+&-&+&+&-&+&+&+&-&+\\
+&+&+&-&+&+&+&-&+&+&+&-&-&-&-&+\\
-&-&-&+&+&+&+&-&+&+&+&-&+&+&+&-\\
+&+&+&-&-&-&-&+&+&+&+&-&+&+&+&-\\
+&+&+&-&+&+&+&-&-&-&-&+&+&+&+&-\\
\end{array}\right]},\quad
H_{2,3}=&\hspace*{-3mm}\setlength{\arraycolsep}{.8pt}\scalefont{.4}{\left[\begin{array}{rrrrrrrrrrrrrrrr}
+&-&-&-&+&-&+&+&+&+&-&+&+&+&+&-\\
-&+&+&+&-&+&-&-&+&+&-&+&+&+&+&-\\
-&+&+&+&+&-&+&+&-&-&+&-&+&+&+&-\\
-&+&+&+&+&-&+&+&+&+&-&+&-&-&-&+\\
+&+&+&-&+&-&-&-&+&-&+&+&+&+&-&+\\
+&+&+&-&-&+&+&+&-&+&-&-&+&+&-&+\\
+&+&+&-&-&+&+&+&+&-&+&+&-&-&+&-\\
-&-&-&+&-&+&+&+&+&-&+&+&+&+&-&+\\
+&+&-&+&+&+&+&-&+&-&-&-&+&-&+&+\\
+&+&-&+&+&+&+&-&-&+&+&+&-&+&-&-\\
-&-&+&-&+&+&+&-&-&+&+&+&+&-&+&+\\
+&+&-&+&-&-&-&+&-&+&+&+&+&-&+&+\\
+&-&+&+&+&+&-&+&+&+&+&-&+&-&-&-\\
-&+&-&-&+&+&-&+&+&+&+&-&-&+&+&+\\
+&-&+&+&-&-&+&-&+&+&+&-&-&+&+&+\\
+&-&+&+&+&+&-&+&-&-&-&+&-&+&+&+\\
\end{array}\right]}.\\
\end{aligned}\]
giving us a rank $v$ idempotent
\[M = \frac{1}{12}\left[\begin{array}{ccc}
4I & H_{12}& H_{13}\\
H_{12}^T & 4I & H_{23}\\
H_{13}^T& H_{23}^T & 4I
\end{array}\right].\]
Taking the positive entries of the off diagonal blocks of this matrix gives us the adjacency matrix of a $\mu$-heavy $\text{LSSD}(16,10,6;3)$.

\section*{Acknowledgments}
We are grateful to Padraig O Cathain and Bill Kantor for helpful conversations on these topics.

\section{Appendix 1: Feasible parameter sets}
\label{families}
The Handbook of Combinatorial Designs gives us a list of 21 distinct families of symmetric designs. We now examine each family to determine which parameter sets could be extended to LSSDs on three or more fibers. The two conditions we will employ are that $s=\sqrt{k-\lambda}$ and $\nu = \frac{k(k\pm s)}{v}$ are integers, though we will often reference our observations from \ref{gcd}. Our results show that Families 6, 7, 9, 12, 13, and 14 always permit integral parameters. Further, Families 15-19 give us integers in specific cases ($m=1$) but will not be feasible in general. It should be noted that this does not mean that we can find LSSDs in each of these families with $w>2$, instead this means that we cannot disprove the existence of such LSSDs using only our integrality conditions.
\subsection*{Family 1 (Point-hyperplane Designs)}
\[\begin{aligned}
v &= q^m+\dots+1 \qquad
k = q^{m-1}+\dots+1\qquad
\lambda = q^{m-2}+\dots+1\qquad n= q^{m-1}\qquad s=q^{\frac{m-1}{2}}\end{aligned}\]
Since $s$ is a power of $q$, we know that $\gcd(s,v) = 1$. Therefore we cannot form LSSDs from this family.
\subsection*{Family 2 (Hadamard matrix designs)}
\[\begin{aligned}
v &= 4n-1 \qquad k= 2n-1 \qquad \lambda = n-1\qquad s = \sqrt{n}\\
\end{aligned}\]
Since $s$ divides $v+1$, we know that $\gcd(s,v) = 1$ Therefore we cannot form LSSDs from this family.

\subsection*{Family 3 (Chowla)}
\[\begin{aligned}
v&=4t^2+1 \qquad k=t^2 \qquad \lambda = \frac{1}{4}(t^2-1)
\qquad s=\frac{1}{2}\sqrt{3t^2+1}
\end{aligned}\]
Chowla designs require that $v$ is prime, which is impossible for LSSDs.

\subsection*{Family 4 (Lehmer)}
\begin{enumerate}[(a)]
\item
  \[\begin{aligned}
  v&= 4t^2+9\qquad k=t^2+3\qquad \lambda = \frac{1}{3}(t^2+3)\qquad  n=\frac{3}{4}k
  \end{aligned}\]
\item
  \[\begin{aligned}
  v&=8t^2+1 = 64u^2+9 \qquad k=t^2\qquad \lambda=u^2\qquad  n=t^2-u^2\\
  \end{aligned}\]

\item
  \[\begin{aligned}
  v&=8t^2+49 = 64u^2+441\qquad k=t^2+6\qquad \lambda = u^2+7\qquad  n=t^2-u^2-1\\
  \end{aligned}\]
  All three of the Lehmer designs require $v$ to be prime, which is not possible for LSSDs.
\end{enumerate}

\subsection*{Family 5 (Whiteman)}
\[\begin{aligned}
v&=pq \qquad k=\frac{1}{4}(pq-1) \qquad \lambda = \frac{1}{16}(pq-5)\qquad s=\frac{1}{4}(3p+1)
\end{aligned}\]
where $q = 3p+2$. Since $\gcd(s,v) >1$ we must have $s = p$ or $s=q$. However $s=q$ implies that $p$ is negative while $s=p$ implies that $p=1$ and $q=5$. As this case gives the parameters $(5,1,0)$, only the degenerate case is possible. Therefore Whiteman parameters will never give us LSSDs.

\subsection*{Family 6 (Menon)}
\[\begin{aligned}
v &= 4t^2 \qquad k= 2t^2-t \qquad \lambda = t^2-t\\
n&= t^2 \qquad s = t\\
\nu &= \frac{(2t^2-t)(2t^2-t\pm t)}{4t^2}=\frac{1}{2}(2t-1)\left(t-\frac{1\mp1}{2}\right)
\end{aligned}\]
Since $2t-1$ will always be odd, we must have that $\left(t-\frac{1\mp1}{2}\right)$ is even. This means that for odd $t$, we must choose the $+$ so that we have $\nu = (2t-1)\frac{t-1}{2}$. If instead $t$ is even then we must choose the $-$ so that $\nu = (2t-1)\frac{t}{2}$. This means that Menon parameters are always feasible, though we must choose our parameters to be $\nu$-heavy or $\mu$-heavy based on the parity of $t$.

\subsection*{Family 7 (Wallis; McFarland)}
\[\begin{aligned}
v &= q^{m+1}(q^m+\dots+q+2) \qquad k= q^m(q^m+\dots+q+1) \qquad \lambda = q^m(q^{m-1}+\dots+q+1) \qquad s = q^m\\
\nu &= \frac{q^m(q^m+\dots+q+1)(q^m(q^m+\dots+q+1)\pm q^m)}{q^{m+1}(q^m+\dots+q+2)}\\
\end{aligned}\]
If we assume our parameters are $\nu$-heavy, then we have:
\[\begin{aligned}
\nu&=\frac{q^m(q^m+\dots+q+1)(q^m(q^m+\dots+q+2))}{q^{m+1}(q^m+\dots+q+2)}=q^{m-1}(q^m+\dots+q+1)
\end{aligned}\]

\subsection*{Family 8 (Wilson; Shrikhande and Singhi)}
\[\begin{aligned}
v&= m^3+m+1 \qquad k=m^2+1\qquad \lambda = m\qquad n=m^2-m+1\\
\end{aligned}\]
Note that $v = mk+1$. Therefore $\gcd(k,v) = 1$ and we cannot make LSSDs from these designs.

\subsection*{Family 9 (Spence)}
\[\begin{aligned}
v&= 3^m\left(\frac{3^m-1}{2}\right)\qquad k=3^{m-1}\left(\frac{3^m+1}{2}\right)\qquad \lambda = 3^{m-1}\left(\frac{3^{m-1}+1}{2}\right)\qquad 
s=3^{m-1}\\
\nu&=\frac{\frac{1}{2}3^{m-1}(3^m+1)(\frac{1}{2}3^{m-1}(3^m+1)\pm3^{m-1})}{ \frac{1}{2}3^m(3^m-1)}\end{aligned}\]
If we take $\mu$ heavy parameters, then
\[\begin{aligned}
\nu&=\frac{(3^m+1)\left(\frac{1}{2}3^{m-1}(3^m-1)\right)}{3(3^m-1)}=3^{m-2}\left(\frac{3^m+1}{2}\right)
\end{aligned}\]
\subsection*{Family 10 (Rajkundlia and Mitchell; Ionin)}
\[\begin{aligned}
v&=1+qr\left(\frac{r^m-1}{r-1}\right)\qquad k=r^m\qquad \lambda = r^{m-1}\left(\frac{r-1}{q}\right)\qquad r=\frac{q^d-1}{q-1}
\end{aligned}\]
Since $r$ divides $v-1$ and $k$ is a power of $r$, we know that $\gcd(v,k) = 1$. Therefore we cannot make LSSDs from these designs.
\subsection*{Family 11 (Wilson; Brouwer)}
\[\begin{aligned}
v&= 2(q^m+\dots + q)+1\qquad k=q^m\qquad \lambda=\frac{1}{2}q^{m-1}(q-1)\qquad n=\frac{1}{2}q^{m-1}(q+1)\\
\end{aligned}\]
Since $q$ divides $v-1$ and $k$ is a power of $q$, we must have that $\gcd(k,v) = 1$. Therefore we cannot make LSSDs from these designs.
\subsection*{Family 12 (Spence, Jungnickel and Pott, Ionin)}
\[\begin{aligned}
v&=q^{d+1}\left(\frac{r^{2m}-1}{r-1}\right)\qquad k=r^{2m-1}q^d\qquad\lambda = (r-1)r^{2m-2}q^{d-1}\qquad s=r^{m-1}q^d\qquad r=\frac{q^{d+1}-1}{q-1}\\
\nu&=\frac{r^{2m-1}q^d(r^{2m-1}q^d\pm r^{m-1}q^d)}{q^{d+1}\left(\frac{r^{2m}-1}{r-1}\right)} = \frac{q^{d-1}r^{3m-2}(r^{m}\pm 1)}{r^{2m-1}+\dots+1}
\end{aligned}\]
However $r^{3m-2}$ is relatively prime with the denominator, so we must have $(r^{2m-1}+\dots+1)\vert q^{d-1}\left(r^m\pm 1\right)$. However since $r = \frac{q^{d+1}-1}{q-1} = q^{d}+\dots+1$, we have that $q^{d-1}<r$. Therefore $q^{d-1}\left(r^m\pm 1\right)<r^{m+1}\pm r<r^{2m-1}\dots+1$ in all cases except $m=1$. However, when $m=1$,
\[\begin{aligned}
v&=q^{d+1}\left(q^d+\dots+q+2\right)\qquad k=q^d(q^d+\dots+1)\qquad\lambda=q^{d}\left(q^{d-1}+\dots+q+1\right)\qquad s=q^d
\end{aligned}\]
Giving us the same parameters as McFarland parameters (Family 7). Therefore these parameters will work for $\nu$-heavy designs when $m=1$.

\subsection*{Family 13 (Davis and Jedwab)}
\[\begin{aligned}
v&=\frac{1}{3}2^{2d+4}\left(2^{2d+2}-1\right)\qquad k=\frac{1}{3}2^{2d+1}\left(2^{2d+3}+1\right)\qquad \lambda = \frac{1}{3}2^{2d+1}\left(2^{2d+1}+1\right)\qquad s=2^{2d+1}\\
\nu&=\frac{\frac{1}{3}2^{2d+1}\left(2^{2d+3}+1\right)\left(\frac{1}{3}2^{2d+1}\left(2^{2d+3}+1\right)\pm 2^{2d+1}\right)}{\frac{1}{3}2^{2d+4}\left(2^{2d+2}-1\right)}\\
&=\frac{\left(2^{2d+3}+1\right)\left(\left(2^{2d+3}+1\right)\pm 3\right)2^{2d-2}}{3\left(2^{2d+2}-1\right)}\\
\end{aligned}\]
If we take $\mu$-heavy parameters, then we get:
\[\begin{aligned}
\nu&=\frac{\left(2^{2d+3}+1\right)\left(2^{2d+3}-2\right)2^{2d-2}}{3\left(2^{2d+2}-1\right)}=\frac{\left(2^{2d+3}+1\right)2^{2d-1}}{3}.
\end{aligned}\]
As $2^n + 1$ is divisible by 3 anytime $n$ is odd, this will always be an integer.

\subsection*{Family 14 (Chen)}
\[\begin{aligned}
v&=4q^{2d}\left(\frac{q^{2d}-1}{q^2-1}\right)\qquad k=q^{2d-1}\left(1+2\left(\frac{q^{2d}-1}{q-1}\right)\right)\qquad\lambda = q^{2d-1}(q-1)\left(\frac{q^{2d-1}+1}{q+1}\right)\qquad s=q^{2d-1}\\
\nu&=\frac{q^{2d-1}\left(1+2\left(\frac{q^{2d}-1}{q-1}\right)\right)\left(q^{2d-1}\left(1+2\left(\frac{q^{2d}-1}{q-1}\right)\right)\pm q^{2d-1}\right)}{4q^{2d}\left(\frac{q^{2d}-1}{q^2-1}\right)}\\
\end{aligned}\]
If we use $\mu$-heavy parameters, this gives us:
\[\begin{aligned}
\nu&=\frac{\left(1+2\left(\frac{q^{2d}-1}{q-1}\right)\right)\left(2q^{2d-1}\left(\frac{q^{2d}-1}{q-1}\right)\right)}{4q\left(\frac{q^{2d}-1}{q^2-1}\right)}=\frac{q^{2d-2}(q+1)\left(1+2\left(\frac{q^{2d}-1}{q-1}\right)\right)}{2}\\
\end{aligned}\]
Since $2$ will always divide either $q^{2d-2}$ or $q+1$, we have that $\nu$ is integral under $\mu$-heavy parameters

\subsection*{Family 15 (Ionin)}
\[\begin{aligned}
v&=q^d\left(\frac{r^{2m}-1}{(q-1)(q^d+1)}\right)\qquad k= q^dr^{2m-1}\qquad \lambda = q^d(q^d+1)(q-1)r^{2m-2}\qquad s=q^dr^{m-1}\\
r&=q^{d+1}+q-1\\
\nu&=\frac{q^dr^{2m-1}\left(q^dr^{2m-1}\pm q^dr^{m-1}\right)}{q^d\left(\frac{r^{2m}-1}{(q-1)(q^d+1)}\right)}=\frac{(q-1)(q^d+1)q^dr^{3m-2}\left(r^m\pm 1\right)}{(r^m+1)(r^m-1)}=\frac{(q-1)(q^d+1)q^dr^{3m-2}}{(r^m\mp1)}.\\
\end{aligned}\]
We now split these parameters into two cases, first when $m=1$ and then the remaining possibilities where $m\geq 2$.\par
First assume that $m=1$. Then,
\[\begin{aligned}
\nu&=\frac{(q-1)(q^d+1)q^dr}{(r\mp1)}.
\end{aligned}\]
Under $\mu$-heavy conditions, this gives
\[\begin{aligned}
\nu&=\frac{(q-1)(q^d+1)q^dr}{(r+1)}=\frac{(q-1)(q^d+1)q^dr}{q(q^d+1)}=(q-1)q^{d-1}r
\end{aligned}\]
Therefore these parameters are feasible using $\mu$-heavy parameters when $m=1$. \par
Now consider when $m\geq 2$. In this case, note that $r^{3m-2}$ is relatively prime to $r^m\mp 1$. Therefore if $\nu$ is integral, then $r^m\mp 1$ must divide $q^d(q-1)(q^d+1)$. However, since $q\geq 2$ we know that $r = q(q^d+1)-1>q^d+1$ and $r = q^{d+1}+q-1>q^{d+1}-q^d$. Therefore $r^m\geq r^2 > q^d(q-1)(q^d+1)$ meaning that it is not possible for $r^m$ to divide the latter. Therefore $\nu$ will never be integral when $m>1$.

\subsection*{Family 16 (Ionin)}
\[\begin{aligned}
v&=2\cdot3^d\left(\frac{q^{2m}-1}{3^d+1}\right)\qquad k= 3^dq^{2m-1}\qquad\lambda = \frac{1}{2}3^d(3^d+1)q^{2m-2}\qquad s = 3^dq^{m-1}\qquad q=\frac{1}{2}(3^{d+1}+1)\\
\nu&=\frac{3^dq^{2m-1}(3^dq^{2m-1}\pm3^dq^{m-1})}{2\cdot3^d\left(\frac{q^{2m}-1}{3^d+1}\right)}=\frac{3^d(3^d+1)q^{3m-2}(q^{m}\pm 1)}{2(q^m+1)(q^m-1)}=\frac{3^d(3^d+1)q^{3m-2}}{2(q^m\mp1)}\\
\end{aligned}\]
We again must consider the case when $m=1$ seperately. If $m=1$, then
\[\begin{aligned}\nu&=\frac{3^d(3^d+1)q}{2(q\mp1)}\\
\end{aligned}\]
If we take $\mu$ heavy parameters, we have
\[\begin{aligned}
\nu&=\frac{3^d(3^d+1)q}{3^{d+1}+3}=3^{d-1}q\\
\end{aligned}\]
Therefore when $m=1$, these parameters are feasible with $\mu$-heavy parameters. Using the same arguments as before, we can quickly find that $\nu$ will not be an integer for $m>1$ noting that $q$ is relatively prime to $q^m\mp 1$ and $q^m-1> 3^d(3^d+1)$.

\subsection*{Family 17 (Ionin)}
\[\begin{aligned}
v&=3^d\left(\frac{q^{2m}-1}{2(3^d-1)}\right)\qquad k= 3^dq^{2m-1}\qquad\lambda = 23^d(3^d-1)q^{2m-2}\qquad s=3^dq^{m-1}\qquad q=3^{d+1}-2\\
\nu&=\frac{3^dq^{2m-1}\left(3^dq^{2m-1}\pm 3^dq^{m-1}\right)}{3^d\left(\frac{q^{2m}-1}{2(3^d-1)}\right)}=\frac{3^dq^{3m-2}\left(q^m\pm 1\right)\left(2(3^d-1)\right)}{\left(q^{2m}-1\right)}=\frac{2q^{3m-2}3^d(3^d-1)}{\left(q^{m}\mp1\right)}\\
\end{aligned}\]
As before, consider the case when $m=1$ under $\nu$-heavy parameters,
\[\begin{aligned}
\nu&=\frac{2q^{3m-2}3^d(3^d-1)}{\left(q-1\right)}=\frac{2q^{3m-2}3^d(3^d-1)}{\left(3^{d+1}-3\right)}=2q^{3m-2}3^{d-1}.\\
\end{aligned}\]
Therefore these parameters are feasible when $m=1$. We again find that $m\geq 2$ will not permit $\nu$ to be an integer as $q^{3m-2}$ is relatively prime to $q^m\pm 1$ and $q^m-1>2\cdot3^d(3^d-1)$.

\subsection*{Family 18 (Ionin)}
\[\begin{aligned}
v&=2^{2d+3}\left(\frac{q^{2m}-1}{q+1}\right)\qquad k= 2^{2d+1}q^{2m-1}\qquad \lambda = 2^{2d-1}(q+1)q^{2m-2}\qquad s=2^{2d+1}q^{m-1}\qquad q=\frac{1}{3}\left(2^{2d+3}+1\right)\\
\nu&=\frac{2^{2d+1}q^{2m-1}\left(2^{2d+1}q^{2m-1}\pm 2^{2d+1}q^{m-1}\right)}{2^{2d+3}\left(\frac{q^{2m}-1}{q+1}\right)}=\frac{(q+1)2^{2d-1}q^{3m-2}}{\left(q^{m}\mp1\right)}\\
\end{aligned}\]
If $m=1$ and we take $\mu$-heavy parameters, then $\nu = 2^{2d-1}q^{3m-2}$. As before, $\nu$ is non integral when $m>1$ noting that $q^{3m-2}$ is relatively prime to $q^m\mp 1$ and $q^m-1>(q+1)2^{2d-1}$.

\subsection*{Family 19 (Ionin)}
\[\begin{aligned}
v&=2^{2d+3}\left(\frac{q^{2m}-1}{3q-3}\right)\qquad k = 2^{2d+1}q^{2m-1} \qquad \lambda = 3*2^{2d-1}(q-1)q^{2m-2}\qquad s = 2^{2d+1}q^{m-1}\qquad q=2^{2d+3}-3\\
\nu&=\frac{2^{2d+1}q^{2m-1}\left(2^{2d+1}q^{2m-1}\pm 2^{2d+1}q^{m-1}\right)}{2^{2d+3}\left(\frac{q^{2m}-1}{3q-3}\right)}=\frac{2^{2d-1}q^{3m-2}3(q-1)}{\left(q^m\mp1\right)}\\
\end{aligned}\]
If $m=1$ and we take $\nu$-heavy parameters then $\nu = 2^{2d-1}3q$. As before, $\nu$ is non integral when $m>1$ noting that $q^{3m-2}$ is relatively prime to $q^m\mp 1$ and $q^m\mp1>3\cdot2^{2d-1}(q-1)$.

\subsection*{Family 20 (Ionin)}
For this family we use the only known realization where $p=2$ and $q=2^d-1$ is a Mersenne prime.
\[\begin{aligned}
v&=1+2^{d+1}\frac{2^{dm}-1}{2^d+1}\qquad k=2^{2dm} \qquad \lambda = 2^{2dm-d-1}(2^{d}+1)\qquad n=2^{2dm-d-1}(2^d-1)\\
\end{aligned}\]
Our first restriction tells us that $n$ must be a square. However since $2$ does not divide $2^d+1$, we know that $2^d-1$ must be a square in order for $n$ to be a square giving us a contradiction.
\subsection*{Family 21 (Kharaghani and Ionin)}
\[\begin{aligned}
v&=4t^2\left(\frac{q^{m+1}-1}{q-1}\right)\qquad k=(2t^2-t)q^m\qquad \lambda = (t^2-t)q^m\qquad s=tq^{\frac{m}{2}}\qquad q=(2t-1)^2\\
\nu &=\frac{(2t^2-t)q^m\left((2t^2-t)q^m\pm tq^{\frac{m}{2}}\right)}{4t^2\left(\frac{q^{m+1}-1}{q-1}\right)}\\
&=\frac{(2t-1)^{3m+1}\left((2t-1)^{m+1}\pm 1\right)(q-1)}{4\left((2t-1)^{2m+2}-1\right)}\\
&=\frac{(2t-1)^{3m+1}(q-1)}{4\left((2t-1)^{m+1}\mp1\right)}\\
\end{aligned}\]
First, since $(2t-1)$ is odd, we have that $(2t-1)^{3m+1}$ is relatively prime to $4((2t-1)^{m+1}\mp 1)$. However since $m\geq1$, $4((2t-1)^{m+1}\mp1) \geq 4(q-1)$ and thus $\nu$ is not integral. 
\subsection*{Summary}
We have shown here that only Families 6, 7, 9, 13, and 14 will always satisfy our integrality conditions. Further, Families 12, 15, 16, 17, 18, and 19 satisfy our integrality conditions whenever $m=1$. Finally all remaining families will not allow for any LSSDs with $w>2$. 

\end{document}